\documentclass{article}[utf8, 12pt]%
\usepackage{amsmath}
\usepackage{amsfonts}
\usepackage{amssymb}
\usepackage{mathrsfs}
\usepackage{graphicx}
\usepackage{hyperref}
\usepackage{stmaryrd}
\usepackage{authblk}
\usepackage{enumerate}
\usepackage{bbm}
\usepackage{color}%
\setcounter{MaxMatrixCols}{30}
\usepackage{esint}
\usepackage{mathtools}
\usepackage{etoolbox}
\let\origpartial\partial
\usepackage{mathabx}
\apptocmd{\thebibliography}{\renewcommand{\sc}{}}{}{}
\usepackage[title,titletoc]{appendix}
\usepackage{xpatch,amsthm}
\makeatletter
\xpatchcmd{\@thm}{\fontseries\mddefault\upshape}{}{}{} 
\makeatother
\providecommand{\U}[1]{\protect\rule{.1in}{.1in}}
\newtheorem{theorem}{Theorem}

\newtheorem{claim}[theorem]{Claim}

\newtheorem{definition}[theorem]{Definition}

\newtheorem{lemma}[theorem]{Lemma}

\newtheorem*{question*}{Question}
\newtheorem{proposition}[theorem]{Proposition}
\newtheorem{remark}[theorem]{Remark}

\newcommand{\X}{\mathfrak X}
\newcommand{\Xu}{\mathfrak X u}

\begin{document}
	\author[a]{Arka Mallick\thanks{arkamallick@iisc.ac.in}}
	\author[b]{Swarnendu Sil\thanks{swarnendusil@iisc.ac.in}\thanks{Supported by the ANRF-SERB MATRICS Project grant MTR/2023/000885}}
	
	\affil[a,b]{Department of Mathematics \\ Indian Institute of Science\\ Bangalore, India}
	
	\title{Continuity estimates for variable growth variational problems in the Heisenberg group}	
	\date{}
	\maketitle 
	\begin{abstract}
		We study regularity results for local minimizers of variable growth variational problem in Heisenberg groups under suitable integrability assumption on the horizontal gradient of the exponent function. More precisely, our main focus is on the continuity properties of the horizontal gradient $\mathfrak{X} u$, where $u \in HW_{\text{loc}}^{1,1}$ is a local minimizer of the functional   
		\begin{align*}
			I [u]:= \int_{\Omega} \frac{1}{p(x)}\left\lvert \mathfrak{X} u \right\rvert^{p(x)}\ \mathrm{d}x
		\end{align*} in a 
		domain of $\Omega \subset \mathbb{H}_{n},$ where $\mathbb{H}_{n}$ is the Heisenberg group with homogeneous dimension $Q=2n+2,$ where $p \in HW^{1,1}\left( \Omega\right)$ and we assume suitable integrability hypothesis on $\mathfrak{X} p.$ We prove 
		\begin{enumerate}[(a)]
			\item if $\mathfrak{X} p \in L^{q}\left( \Omega; \mathbb{R}^{2n}\right)$ with $q>Q,$ then $\X u$ is H\"{o}lder continuous and  
			\item if $\mathfrak{X} p \in L^{(Q,1)}\log L \left( \Omega; \mathbb{R}^{2n}\right),$ then $\mathfrak{X} u$ is continuous. 
		\end{enumerate} 
		In fact, in the non-borderline case $(a)$, we prove H\"{o}lder continuity of the horizontal gradient for the minima of more general variational problems, assuming $p$ to be H\"{o}lder continuous, i.e. without any assumption on the weak derivative of $p.$ To the best of our knowledge, the present work is the first regularity result for minimizers of variable growth variational problems in the setting of Heisenberg groups.  
	\end{abstract}
	\noindent\textbf{Keywords:} Heisenberg group, $p$-Laplacian, quasilinear subelliptic equations, variable exponent, $p(\cdot)$-Laplacian, continuity of horizontal gradient. \smallskip 
	
	\noindent\textbf{MSC 2020:} Primary 35B65, 35H20, 35J62; Secondary 35H10, 35J70, 35J75.
	\tableofcontents 
	
	\section{Introduction and main results}
	Let $n \geq 1 $  be an integer and let $\mathbb{H}_{n}$ denote the $n$-th Heisenberg group over the reals. Let $Q:=2n+2$ stands for the homogeneous dimension of $\mathbb{H}_{n}.$ We are interested in the continuity properties of the horizontal gradient $\X u$ for a local minimizer $u$ of the following functional 
	\begin{align}
		I\left[w\right] = \int_{\Omega} \frac{1}{p \left(x\right)}\left\lvert \mathfrak{X} w \right\rvert^{p(x)}\ \mathrm{d}x, \label{px-energy}
	\end{align}
	where $\Omega \subset \mathbb{H}_{n}$ is an open, bounded subset. The study of `variable growth' problems in the Euclidean case started gaining attention after Zhikov investigated them in \cite{Zhikov_higherintegrability}, \cite{Zhikov_Lavrentiev}, where he studied the variational problem 
	\begin{align*}
		\inf \left\lbrace \int_{U} \frac{1}{p(x)}\left\lvert \nabla u \right\rvert^{p(x)}\ \mathrm{d}x: u \in u_{0} + W_{0}^{1, p\left(\cdot\right)}\left( U \right)\right\rbrace,  
	\end{align*}
	where $U \subset \mathbb{R}^{n}$ is an open bounded subset. The Euler-Lagrange equation is the so-called $p(x)$-Laplacian, i.e. 
	\begin{align*}
		\operatorname{div} \left(\left\lvert  \nabla u \right\rvert^{p(x)-2} \nabla u \right) &= 0 &&\text{ in } U. 
	\end{align*} Apart from being mathematically interesting, these type of variable growth problems arise in a variety of applications, e.g. in the theory of electrorheological fluids, thermistor problem, fluid flow in porous media, magnetostatics etc. Zhikov \cite{Zhikov_Lavrentiev} showed that a Lavrentiev gap can occur and the minimizers need not have any additional regularity if the exponent function $p$ is allowed to be too irregular (see \cite{Balci_Diening_Surnachev_LavrentievGap} and the references therein for the state-of-the-art at present). On the other side, when $p$ is regular enough, the local H\"{o}lder continuity of $\nabla u$ follows from Acerbi-Mingione \cite{Acerbi_Mingione_C1alpha}. 
	
	On the subelliptic side, regularity questions for standard $p$-growth variational problems are being investigated in a number of important works, namely Capogna-Danielli-Garofalo \cite{Capogna_Danielli_Garofalo}, Capogna \cite{Capogna_regularity}, Capogna \cite{Capogna_quasiconformal}, Capogna-Garofalo \cite{Capogna_Garofalo_partialregularity_step2} (see also the books Capogna-Danielli \cite{Capogna_Danielli_book}, Riccotti \cite{Ricciotti_pLaplaceHeisenberg}), Mingione-Zatorska-Goldstein-Zhong \cite{Mingione_ZatorskaGoldstein_Zhong}, Zhong \cite{zhong2018regularityvariationalproblemsheisenberg}, Mukherjee-Zhong \cite{Mukherjee_Zhong}, Mukherjee-Sire \cite{Mukherjee_Sire}, Mukherjee \cite{Mukherjee_Lipschitz}, Mukherjee \cite{Mukherjee_C1alpha}, Citti-Mukherjee \cite{Citti_Mukherjee_Step2}, \cite{Mallick_Sil_ExcessDecay}. A  relatively recent survey can be found in Capogna-Citti-Zhong \cite{Capogna_Citti_Zhong}.  However, to the best of our knowledge, variable growth variational problem in the subelliptic setting has not been explored yet.

	Our first result can be viewed as a subelliptic analogue of the results in \cite{Acerbi_Mingione_C1alpha}, where we prove local H\"{o}lder continuity (in the sense of Folland-Stein, but this also implies H\"{o}lder continuity with respect to the Euclidean metric with half the H\"{o}lder exponent) for the horizontal gradient for a slightly more general functional. Throughout the article, we will use the notation $\gamma'$ to denote $\gamma/(\gamma-1)$ for any $\gamma \in (1,\infty)$.
	\begin{theorem}\label{Theorem holder continuity}
		Let $0 < \lambda \leq \Lambda < \infty$ and $1 < \gamma_{1} \leq \gamma_{2} < \infty$ be real numbers and let $\gamma := \min\left\lbrace \gamma_{1}, \gamma'_2,  2Q'. \right\rbrace.$ Let $\Omega \subset \mathbb{H}_{n}$ be open and bounded. Let $a:\Omega \rightarrow [\nu, \Lambda]$ be measurable and $p:\Omega \rightarrow [\gamma_{1}, \gamma_{2}]$ be continuous. Suppose $F \in L^{\gamma^{'}}\left(\Omega; \mathbb{R}^{2n}\right)$ and let $u$ be a local minimizer to the functional 
		\begin{align*}
			J\left[u\right] = \int_{\Omega} \frac{a\left(x\right)}{p \left(x\right)}\left\lvert \mathfrak{X} u \right\rvert^{p(x)}\ \mathrm{d}x - \int_{\Omega} \left\langle F,  \mathfrak{X} u \right\rangle\ \mathrm{d}x. 
		\end{align*}
		Then the following holds. 
		\begin{enumerate}[(a)]
			\item If $F \in L^{q\gamma^{'}}_{\rm{loc}}\left(\Omega; \mathbb{R}^{2n}\right)$ for some $q>1$ and 
			\begin{align}\label{log holder condition thm}
				\lim\limits_{R \rightarrow 0} \omega_p\left(R\right)\log \left(\frac{1}{R}\right):= L < +\infty, 
			\end{align}
			then there exists $\sigma >0$ such that $\X u \in L^{p\left(\cdot\right)\left(1 + \sigma\right)}_{\rm{loc}}\left(\Omega; \mathbb{R}^{2n}\right).$
			
			\item If $a$ is continuous, $F \in {\rm{BMO}}_{\rm{loc}}\left(\Omega; \mathbb{R}^{2n}\right)$ and 
			\begin{align}\label{vanishing log holder condition thm}
				\lim\limits_{R \rightarrow 0} \omega_p\left(R\right)\log \left(\frac{1}{R}\right) = 0 , 
			\end{align}
			then $u \in C^{0, \mu}_{\rm{loc}}\left(\Omega\right)$	for every $0 \leq \mu < 1.$ 
			\item If $a \in C^{0, \alpha_{1}}_{\rm{loc}}\left(\Omega\right)$, $F \in C^{0, \alpha_{2}}_{\rm{loc}}\left(\Omega; \mathbb{R}^{2n}\right)$ and $p \in C^{0, \alpha_{3}}\left(\Omega\right)$ for some exponents $\alpha_{1}, \alpha_{2}, \alpha_{3} \in (0, 1),$ then there exists $\beta \in (0,1)$ such that $\X u \in C^{0, \beta}_{\rm{loc}}\left(\Omega; \mathbb{R}^{2n}\right).$
		\end{enumerate}
	\end{theorem}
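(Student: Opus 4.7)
The plan is to prove the three parts sequentially, each building on the previous one, following an Acerbi--Mingione type strategy adapted to the subelliptic setting. For part (a), the natural route is a Caccioppoli-type inequality tailored to the variable exponent, followed by a Gehring-type reverse Hölder argument on Folland--Stein balls. The log-Hölder condition \eqref{log holder condition thm} is precisely what permits one to control quantities of the form $R^{-\omega_p(R)}$ uniformly on small balls and hence to absorb the gap between $|\mathfrak{X} u|^{p(x)}$ and $|\mathfrak{X} u|^{(p)_{B_R}}$; the self-improvement of the resulting weak reverse Hölder inequality then produces the higher integrability exponent $\sigma > 0$.

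For part (b), once the higher integrability from (a) is in hand, I would bootstrap it under the stronger hypothesis \eqref{vanishing log holder condition thm} by repeatedly raising the exponent in the reverse Hölder inequality, with the BMO assumption on $F$ handling the linear term via John--Nirenberg. Since $\omega_p(r)\log(1/r) \to 0$, each iteration improves $\sigma$ by a definite amount, yielding $\mathfrak{X} u \in L^q_{\mathrm{loc}}$ for every $q < \infty$; the Heisenberg Morrey embedding $HW^{1,q}_{\mathrm{loc}} \hookrightarrow C^{0,1-Q/q}_{\mathrm{loc}}$ then delivers $u \in C^{0,\mu}_{\mathrm{loc}}$ for every $\mu < 1$.

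Part (c) is the heart of the theorem, and I would tackle it by a freezing and comparison scheme iterated across scales. Fix $x_0 \in \Omega$ and a small radius $r$, set $p_0 := p(x_0)$, $a_0 := a(x_0)$, $F_0 := F(x_0)$, and let $v$ be the unique solution in $u + HW^{1,p_0}_0(B_r)$ of the frozen Euler--Lagrange equation for the integrand $\tfrac{a_0}{p_0}|\mathfrak{X} v|^{p_0} - \langle F_0, \mathfrak{X} v\rangle$. The constant exponent regularity theory for the subelliptic $p$-Laplacian in $\mathbb{H}_n$, as developed in the works of Mingione--Zatorska-Goldstein--Zhong, Zhong, Mukherjee, Mukherjee--Zhong and Citti--Mukherjee cited in the introduction, yields $\mathfrak{X} v \in C^{0,\beta_0}_{\mathrm{loc}}$ together with the corresponding excess decay. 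A comparison estimate must then bound $\int_{B_r} |\mathfrak{X} u - \mathfrak{X} v|^{p_0}\, dx$ by a small power of $r$; the delicate term is the difference $(|\mathfrak{X} u|^{p(x)-2} - |\mathfrak{X} u|^{p_0-2})\mathfrak{X} u$, which I would handle via the pointwise bound $||t|^{\varepsilon}-1| \leq C|\varepsilon|\log(e+t)$ combined with the higher integrability from (a) to absorb the logarithmic factor, together with the Hölder continuity of $a$, $F$, $p$ for the polynomial decay. Combining the excess decay for $v$ with this comparison yields a Campanato-type decay for the excess of $\mathfrak{X} u$, which iterates to the desired Hölder continuity.

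The main obstacle I anticipate is the comparison step in (c). The logarithmic loss generated by freezing the exponent must be balanced \emph{exactly} against the polynomial gain coming from the Hölder moduli of $a$, $F$ and $p$, and this balancing forces one to invoke the higher integrability from (a) in a very specific norm. The Heisenberg setting adds a genuine difficulty, since the excess decay for the frozen subelliptic $p$-Laplacian is itself technically delicate: the Caccioppoli-type estimates for $\mathfrak{X} v$ involve commutators with vertical derivatives that are only controlled up to boundary terms in the degenerate regime, so one must transport the constant-exponent decay through the comparison without losing the regularity gained at the frozen level.
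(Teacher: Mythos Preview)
Your plan for part (a) is correct and matches the paper's approach: a Caccioppoli estimate at the variable exponent level, plus log-H\"older to control $R^{-\omega_p(R)}$, plus Gehring.

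The real gap is in part (b). The Gehring self-improvement gives you a single, fixed gain $\sigma_0>0$ determined by the structural constants; it does \emph{not} furnish a mechanism for repeated iteration. The vanishing condition $\omega_p(r)\log(1/r)\to 0$ does not make the reverse H\"older inequality self-improving beyond what Gehring already yields, because the equation itself lives at the exponent $p(\cdot)$ and there is no new Caccioppoli inequality available at level $p(\cdot)(1+\sigma)$ to feed back into the argument. So ``each iteration improves $\sigma$ by a definite amount'' is not justified, and you will not reach $\mathfrak{X}u\in L^q_{\mathrm{loc}}$ for all $q<\infty$ this way. The paper instead proceeds by \emph{comparison}: on a small ball $B_R$ it freezes the exponent at $p_2:=\max_{\overline{B_{4R}}}p$ and the coefficient at $a(x_{\max})$, solves the homogeneous constant-exponent problem with boundary data $u$, uses the sup bound for the frozen subelliptic $p$-Laplacian, and estimates the three error terms (from $a$, from $F$ via BMO, and from the exponent via a $|\mathfrak{X}u|^{p_2}\log|\mathfrak{X}u|$ integral controlled by the higher integrability from (a)). The output is a genuine Morrey decay $\int_{B_\rho}|\mathfrak{X}u|^{\mathfrak{p}(\rho)}\le c\rho^{Q-\tau}$ for every $\tau>0$, from which the claimed H\"older continuity of $u$ follows.

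This gap then propagates to part (c). Your comparison scheme is the right template, but the balancing you describe cannot close without the Morrey bound from (b). Concretely, after freezing and subtracting, the paper estimates each error term by a constant times $r^{\alpha+Q-\tau}$, and every one of these estimates uses $\int_{B_r}|\mathfrak{X}u|^{\mathfrak{p}(r)}\le cr^{Q-\tau}$; the higher integrability from (a) alone only gives an $L^{p(\cdot)(1+\sigma)}$ bound, not a power-of-radius decay, so the logarithmic loss from freezing the exponent would \emph{not} be beaten by the H\"older moduli of $a,F,p$ without that intermediate step. Two smaller points: it is important to freeze at $p_2=\max_{\overline{B_{4R}}}p$ rather than $p(x_0)$, since the argument repeatedly uses $|\mathfrak{X}u|^{p_2}\ge |\mathfrak{X}u|^{p(x)}$ on $\{|\mathfrak{X}u|\ge 1\}$ and the minimality of $v$ in $HW^{1,p_2}$; and the paper's frozen problem is homogeneous (no $F$-term), with $F$ entering only through the comparison via $F-(F)_R$, which is where the BMO/H\"older assumption on $F$ is actually used.
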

	We remark that the result in \cite{Acerbi_Mingione_C1alpha} applies to far more general, even non-differentiable functionals, for which no Euler-Lagrange equation is available. While this can also be done, in this work, we chose to restrict ourselves to the functional $J$ only and use the Euler-Lagrange equations, as our main interest here is the functional $I$ as defined in \eqref{px-energy}. Observe that for the functional $I$, i.e. when $a\equiv 1$ and $F \equiv 0,$ conclusion $(c)$ of Theorem \ref{Theorem holder continuity} and Sobolev embedding (see e.g. Proposition \ref{Morrey's theorem} below) implies that if $\X p \in L^{q}$ with $q>Q,$ then $\Xu$ is H\"{o}lder continuous. Our primary goal in the present work, however lies in deducing the following borderline continuity estimates for $\X u$.
	\begin{theorem}\label{main theorem}
		Let $1 < \gamma_{1} \leq \gamma_{2} < \infty$ be real numbers. Let $\Omega \subset \mathbb{H}_{n}$ be open and bounded. Let $p:\Omega \rightarrow [\gamma_{1}, \gamma_{2}]$ be a $HW^{1,1}\left(\Omega\right)$ function. Let $w$ be a local minimizer to the functional 
		\begin{align*}
			I\left[w\right] = \int_{\Omega} \frac{1}{p \left(x\right)}\left\lvert \mathfrak{X} w \right\rvert^{p(x)}\ \mathrm{d}x. 
		\end{align*}
		If $\X p \in L^{\left(Q,1\right)}\log L \left( \Omega; \mathbb{R}^{2n}\right), $ then $\X w$ is continuous in $\Omega.$ Moreover, 
		\begin{enumerate}[(i)]
			\item there exists a constant $c_{3} = c_{3}\left( \texttt{data} \right) \geq 1$ and a positive radius $R_{1} = R_{1}\left( \texttt{data}, \left\lVert \left\lvert \X w \right\rvert^{p\left(\cdot\right)}\right\rVert_{L^{1}\left( \Omega\right)} \right) >0$ such that if $0 < R \leq R_{1},$ then we have 
			\begin{align}\label{sup bound homogeneous}
				\sup\limits_{B\left(x, R/2\right)} \left\lvert \X w \right\rvert \leq c_{3}\fint_{B\left(x, R\right)}\left( 1 +  \left\lvert \X w \right\rvert \right)  
			\end{align}
			holds whenever $B(x, R) \subset \Omega$ is a metric ball and  
			\item for any $\varepsilon \in (0,1)$ and any $A \geq 1,$ there exists a positive constant $\tau_{1} = \tau_{1}\left( \texttt{data}, \left\lVert \left\lvert \X w \right\rvert^{p\left(\cdot\right)}\right\rVert_{L^{1}\left( \Omega\right)}, \varepsilon, A \right) \in (0, 1/4)$ such that  
			\begin{align}\label{osc bound homogeneous}
				\sup\limits_{B\left(x_{0}, R/2\right)} \left\lvert \X w \right\rvert \leq A\lambda  \implies \sup\limits_{x, y \in B\left(x_{0}, \tau R\right)} \left\lvert \X w \left( x\right) - \X w \left( y\right) \right\rvert \leq \varepsilon \lambda, 
			\end{align}
			for all $0 < \tau \leq \tau_{1},$ whenever  $B(x_{0}, R) \subset \Omega$ is a metric ball and $\lambda >0$ is a positive constant,    
		\end{enumerate}
		where $\texttt{data} = Q, \gamma_{1}, \gamma_{2}, \left\lVert p\right\rVert_{L^{1}\left( \Omega\right)} + \left\lVert \X p \right\rVert_{L^{\left(Q,1\right)}\log L\left( \Omega; \mathbb{R}^{2n}\right)}.$
	\end{theorem}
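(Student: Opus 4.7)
The plan is to adapt the freezing-and-comparison strategy that has been developed for the Euclidean $p(x)$-Laplacian to the subelliptic setting, coupling it with the regularity theory for the constant-exponent horizontal $p$-Laplacian on $\mathbb{H}_n$ due to Mukherjee-Zhong and subsequent works. The continuity of $\X w$ will be obtained through a quantitative Campanato-type oscillation decay, while items (i) and (ii) are the effective versions of the same scheme. The key observation is that $L^{(Q,1)}\log L$ is exactly the borderline Lorentz-Zygmund space for $\X p$ which makes the nonlinear potential arising in the iteration finite and with vanishing modulus of continuity at the origin.

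The Euler-Lagrange equation for $I$ reads $\operatorname{div}_H\bigl(|\X w|^{p(x)-2}\X w\bigr) = 0$ weakly (only $w$ is varied, not $p$). The hypothesis $\X p \in L^{(Q,1)}\log L \hookrightarrow L^{Q}$ combined with the subelliptic Morrey embedding forces $p$ to be log-H\"{o}lder continuous, so Theorem \ref{Theorem holder continuity}(a) with $a\equiv 1$, $F\equiv 0$ delivers the higher integrability $\X w \in L^{p(\cdot)(1+\sigma)}_{\rm loc}$ for some $\sigma >0$. On a small metric ball $B_R \subset \Omega$, I freeze the exponent by setting $p_0 := (p)_{B_R}$ and solving the Dirichlet problem $\operatorname{div}_H(|\X v|^{p_0-2}\X v)=0$ in $B_R$ with $w-v\in HW^{1,p_0}_{0}(B_R)$. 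Testing the difference of the two equations with $w-v$ and exploiting the standard monotonicity of the constant-exponent $p_0$-Laplacian, the freezing error is bounded pointwise via
\[
\bigl||\X w|^{p(x)-2}\X w - |\X w|^{p_0-2}\X w\bigr| \lesssim |p(x)-p_0|\,|\X w|^{p_0-1}\bigl(1+\bigl|\log|\X w|\bigr|\bigr),
\]
so that the comparison gives $\fint_{B_R}|\X w - \X v|^{p_0} \lesssim \Theta(R)\,\fint_{B_R}(1+|\X w|)^{p_0}$ for a modulus $\Theta(R)$ packaging the oscillation of $p$ on $B_R$ together with the logarithmic loss.

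For the frozen map $v$ the subelliptic $C^{1,\alpha}$-theory yields both a local sup bound for $\X v$ and a decay estimate of the form $\fint_{B_{\tau R}}|\X v-(\X v)_{B_{\tau R}}|^{p_0} \lesssim \tau^{\alpha p_0}\fint_{B_R}|\X v|^{p_0}$. Combining these two ingredients with the comparison yields an excess-decay inequality
\[
E(x_0,\tau R) \leq C\tau^{\alpha p_0} E(x_0,R) + C\bigl(1+\sup_{B_R}|\X w|\bigr)^{p_0}\Theta(R),
\]
with $E(x_0,r):=\fint_{B_r(x_0)}|\X w-(\X w)_{B_r}|^{p_0}$. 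Iterating at dyadic radii $\tau^{k}R$ and summing the errors, the required Dini-type control $\sum_k \Theta(\tau^{k}R)<\infty$ with sum vanishing as $R\to 0$ is exactly what the subelliptic Sobolev-Poincar\'{e} inequality combined with $\X p \in L^{(Q,1)}\log L$ provides, once $\|\X p\|_{L^{(Q,1)}\log L(B_r)}$ is converted into a Dini modulus for the oscillation of $p$. Uniform vanishing of the excess then gives continuity of $\X w$ by the Campanato characterization in the Carnot-Carath\'{e}odory metric. Item (i) follows by running the same iteration starting from a Caccioppoli-Moser bound for $v$ and absorbing the smallness of the comparison error in $R\leq R_1$; item (ii) is a quantitative stopping-time version: once $|\X w|\leq A\lambda$ is assumed, the logarithm in $\Theta$ becomes harmless and $\tau_1$ is chosen as the dyadic level at which the accumulated error falls below $\varepsilon\lambda$.

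The principal difficulty is the logarithmic term $\log|\X w|$ in the freezing estimate. In the H\"{o}lder regime of Theorem \ref{Theorem holder continuity}(c) one could dominate this logarithm by an arbitrarily small positive power of $R$, making summability trivial; in the borderline case treated here only $\omega_p(R)\log(1/R)\to 0$ is available from log-H\"{o}lder continuity alone, which is not enough by itself to close an iteration. The choice of $L^{(Q,1)}\log L$ for $\X p$ is precisely tailored to absorb this extra logarithm: the Lorentz quasi-norm provides the critical Sobolev compensation on the $L^Q$ scale, while the extra $\log L$ pays for the $\log|\X w|$ factor arising from the exponent perturbation. Propagating this Lorentz-Zygmund bookkeeping through every comparison step in the Heisenberg geometry, where the homogeneous dimension $Q=2n+2$ replaces the Euclidean dimension and where the sub-Riemannian commutator structure must be respected when estimating $\X(p-p_0)$ and $\X v$ simultaneously, is the technical heart of the argument.
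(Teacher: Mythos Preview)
Your outline captures the freezing-and-comparison philosophy correctly, but two concrete steps do not go through as written, and they are precisely the two ingredients the paper isolates as essential.

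First, the decay you quote for the frozen map $v$ is of \emph{excess-to-energy} type, $\fint_{B_{\tau R}}|\X v-(\X v)_{B_{\tau R}}|^{p_0}\lesssim \tau^{\alpha p_0}\fint_{B_R}|\X v|^{p_0}$, and from this you cannot derive the excess-to-excess inequality $E(x_0,\tau R)\leq C\tau^{\alpha p_0}E(x_0,R)+\dots$ that you then write down: the right-hand side $\fint_{B_R}|\X v|^{p_0}$ is bounded only by $\lambda^{p_0}$, not by the excess of $w$, so along the iteration each step contributes a term of fixed size $C\tau^{\alpha}\lambda$ which does not sum. The paper instead uses the genuine excess-to-excess decay for the constant-exponent subelliptic $p$-Laplacian (Theorem~\ref{MallickSil estimate}), singled out as one of the two main ingredients.

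Second, and more seriously, the unspecified modulus $\Theta(R)$ in your comparison estimate hides a real obstruction when $p_0>2$. The test-function argument you describe is Lemma~\ref{comparison lemma 2} of the paper and actually gives $\fint_{B_R}|\X w-\X v|^{p_R}\lesssim \mathfrak{P}_{R,\theta}^{2}\,\lambda^{p_R}$ in that range, with $\mathfrak{P}_{R,\theta}=R\log(1/R)(\fint_{B_R}|\X p|^\theta)^{1/\theta}$; after passing to the $L^\gamma$ excess the error becomes $\mathfrak{P}_{R,\theta}^{2/p_R}\lambda$ with $2/p_R<1$, and $\sum_j \mathfrak{P}_{r_j,\theta}^{2/p_j}$ is \emph{not} controlled by $\|\X p\|_{L^{(Q,1)}\log L}$. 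The paper fixes this via a separate linearized comparison (Lemma~\ref{linearized comparison-hom}) which produces an error \emph{linear} in $\mathfrak{P}_{r_{j-1},\theta}$, but this requires a priori two-sided bounds $\lambda/B\leq|\X v_{j-1}|\leq A\lambda$ on the previous frozen map and a delicate case analysis on the ordering of $p_{j-1}$ and $p_j$; this is the step the paper calls its main novelty and is what corrects an error in Baroni's Euclidean argument (Remark~\ref{Baroni error}). Your proposal does not supply it, and your fallback suggestion of converting $\|\X p\|_{L^{(Q,1)}\log L}$ into a log-Dini modulus for $p$ is unavailable: the paper explicitly notes that the hypothesis does not imply $p$ is log-Dini continuous.
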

	Note that we have 
	\begin{align*}
		L^{Q} \subsetneq L^{\left(Q,1\right)} \subsetneq L^{\left(Q,1\right)}\log L \subsetneq L^q \qquad \text{ for all } q>Q. 
	\end{align*}
	So our assumptions here can be viewed as the limiting case of the H\"{o}lder continuity result. 
	In the Euclidean setting, Ok \cite{Ok_pxStein}  proved the continuity of $\nabla u,$ where $u$ is a weak solution to 
	\begin{align}\label{intro pxlaplacian euclidean}
		\operatorname{div} \left(a\left(x\right)\left\lvert  \nabla u \right\rvert^{p(x)-2} \nabla u \right) &= f &&\text{ in } U, 
	\end{align}
	assuming $a$ to be Dini continuous, $p$ to be log-Dini continuous and $f \in L^{(n,1)}$ (see also Ok \cite{Ok_C1minima_px}).  This result generalized the `nonlinear Stein theorem' of Kuusi-Mingione \cite{KuusiMingione_Steinequationslinearpotentials}, \cite{KuusiMingione_nonlinearStein} to the variable exponent setting. Baroni \cite{Baroni_pxStein} also proved the continuity of $\nabla u$, for \eqref{intro pxlaplacian euclidean} when $f=0,$ under Lorentz and Lorentz-Zygmund type integrability assumptions on the derivatives of $p$ and $a,$ namely that $\nabla a \in L^{(n,1)}$ and $\nabla p \in L^{(n,1)}\log L.$ We remark that our assumption on $p$ in Theorem \ref{main theorem} \emph{does not} imply that $p$ is log-Dini continuous. So Theorem \ref{main theorem} falls outside Ok's framework and is more in the spirit of Baroni's results. However, there seems to be an error in \cite{Baroni_pxStein} (see Remark \ref{Baroni error}). Our arguments can easily be adapted to the easier Euclidean setup to furnish a correct proof of Baroni's results in \cite{Baroni_pxStein} (see Remark \ref{Baroni error}). \smallskip

	The borderline continuity result for $\X u,$ when $u$ is a weak solution to the system  	
	\begin{align}\label{intro pxlaplacian Heisenberg}
		\operatorname{div}_{\mathbb{H}} \left(a\left(x\right)\left\lvert  \X u \right\rvert^{p(x)-2} \X u \right) &= f &&\text{ in } \Omega, 
	\end{align}
	is investigated in the upcoming work \cite{MallickSil_SubellipticStein}. While Theorem \ref{main theorem} here is a special case of the main result there, the estimates \eqref{sup bound homogeneous}, \eqref{osc bound homogeneous} are crucially used to derive the general result. Thus, to avoid circularity, one needs a direct, independent proof of these estimates and this is the main motivation for the present article. The conclusions of Theorem \ref{main theorem} also holds without any weak differentiability assumption on $p$ if we assume $p$ to be log-Dini continuous. However, as we do not need this intermediate step to prove the general result under the log-Dini assumption on $p$, we neither state nor prove this result here.  \smallskip

	There are two main ingredients in the proof of Theorem \ref{main theorem}. The first is an `excess-to-excess' decay estimate, which is proved for the setting of Heisenberg group for the first time in Mallick-Sil \cite{Mallick_Sil_ExcessDecay}. The other is a delicate linearization argument, carried out in Section \ref{comparison with bounds}, which is also the main novelty in the present work. Once we have these two ingredients, the pointwise bound and continuity can be established by an iteration argument. \smallskip 
	
	The rest of the article is organized as follows. Section \ref{notations} gathers the notations and Section \ref{prelim} records the some preliminary facts and results that we would use in the article. Section \ref{higher integrability section} proves the crucial higher integrability result that we would need. Section \ref{Holder continuity section} is devoted to proving Theorem \ref{Theorem holder continuity} and Section \ref{continuity section} proves Theorem \ref{main theorem}.  
	\section{Notations}\label{notations} 
	We shall follow the notations in \cite{Mallick_Sil_ExcessDecay}. 
	\begin{itemize}
		\item For $n\geq 1$, the \textit{Heisenberg Group} denoted by $\mathbb{H}_{n}$, is identified with the Euclidean space 
		$\mathbb{R}^{2n+1}$ with the group operation 
		\begin{equation}\label{eq:group op}
			x\cdot y\, := \Big(x_1+y_1,\ \dots,\ x_{2n}+y_{2n},\ t+s+\frac{1}{2}
			\sum_{i=1}^n (x_iy_{n+i}-x_{n+i}y_i)\Big)
		\end{equation}
		for every $x=(x_1,\ldots,x_{2n},t),\, y=(y_1,\ldots,y_{2n},s)\in \mathbb{H}_{n}$. It is a non-Abelian simply connected stratified nilpotent Lie group.
		\item The left invariant vector fields 
		\[ X_i=  \partial_{x_i}-\frac{x_{n+i}}{2}\partial_t, \quad
		X_{n+i}=  \partial_{x_{n+i}}+\frac{x_i}{2}\partial_t,\] 
		for every $1\leq i\leq n$ and the only
		non zero commutator $T = \partial_t,$ constitutes the
		canonical basis of the Lie algebra. We call $X_1,\ldots, X_{2n}$ as \textit{horizontal
			vector fields} and $T$ as the \textit{vertical vector field}.
		\item The \textit{Horizontal gradient} of a scalar function $ f: \mathbb{H}_{n} \to \mathbb{R}$ is denoted by 
		$\X f  = (X_1f,\ldots, X_{2n}f).$ For a vector valued function 
		$F = (f_1,\ldots,f_{2n}) : \mathbb{H}_{n}\to \mathbb{R}^{2n}$, the 
		\textit{Horizontal divergence} is defined as 
		$$ \operatorname{div}_{\mathbb{H}} (F)  =  \sum_{i=1}^{2n} X_i f_i .$$
		\item The bi-invariant Haar measure of $\mathbb{H}_{n}$ is just the Lebesgue 
		measure of $\mathbb{R}^{2n+1}$. $\left\lvert A \right\rvert$ will denote the $(2n+1)$- dimensional Lebesgue measure of a measurable set $A \subset \mathbb{H}_{n}.$ For $A \subset \mathbb{H}_{n}$ measurable with $\left\lvert A \right\rvert >0$ and a measurable function $g:A \rightarrow \mathbb{R}^{N}$, taking values in any Euclidean space $\mathbb{R}^{N},$ $N \geq 1, $ its integral average will be denoted by the notation
		\begin{align*}
			\left(g\right)_{A} := \fint_{A} g\left(x\right)\ \mathrm{d}x := \frac{1}{\left\lvert A \right\rvert} \int_{A} g\left(x\right)\ \mathrm{d}x . 
		\end{align*} 
		If $A=B(x_0, R)$ for $x_0\in \mathbb{H}_n$ and $R>0$, then $\left(g\right)_{x_0, R}$ and $\left(g\right)_A$ denotes the same quantity. If $x_0$ is fixed in the context, then $\left(g\right)_{x_0, R}$ and $\left(g\right)_{R}$ denotes the same quantity. An important property of the integral averages that we would use throughout the rest 
		is 
		\begin{align}\label{minimality of mean}
			\left( \fint_{A} \left\lvert g - \left(g\right)_{A}\right\rvert^{q}\ \mathrm{d}x \right)^{\frac{1}{q}} \leq 2 	\left( \fint_{A} \left\lvert g - \xi \right\rvert^{q}\ \mathrm{d}x \right)^{\frac{1}{q}}, 
		\end{align} 
		for any $ \xi \in \mathbb{R}^{N} $ and any $ 1 \leq q < \infty.$ 
		
		\item The \textit{Carnot-Carath\`eodory metric} (CC-metric) is a left-invariant metric on $\mathbb{H}_{n},$ denoted by $d_{\text{CC}},$ is defined as the length of the shortest horizontal curves, connecting two points. This metric is equivalent to the \textit{Kor\`anyi metric}
		Throughout this article we use CC-metric balls, which are defined to be 
		\begin{align*}
			B_r(x) := \left\lbrace y\in\mathbb{H}_{n}: d_{\text{CC}}(x,y)<r \right\rbrace \qquad \text{ for }r>0 \text{ and } x \in \mathbb{H}_{n}.
		\end{align*} However, by virtue of the equivalence of the metrics, all assertions in this article for CC-balls can be restated to Kor\`anyi balls.  For any CC-metric ball $B_r \subset \mathbb{H}_{n}$ with $r>0,$ there exists a constant $c_{n} = c_{n}(n)>0$ such that 
		\begin{align}\label{measure of balls}
			\left\lvert B_{r} \right\rvert = c_n r^Q. 
		\end{align}
		Note that, there exist constants  $A_1, A_2>0$ such that the Euclidean distance and CC distance satisfy the following relation
		\begin{align}\label{euclidean and cc distance}
			A_1 \left \lvert x-y \right \rvert \leq d_{\text{CC}} \left(x, y\right) \leq A_2 \left \lvert x-y \right \rvert^{\frac{1}{2}},
		\end{align}
		for any $x, y \in \mathbb{H}_n$. 
		\item For us, $\Omega \subset \mathbb{H}_{n}$ would always denote an open, bounded subset, where by bounded we mean $\sup\limits_{x, y \in \Omega} d_{\text{CC}}(x,y) < \infty.$

		\item 	By a slight abuse of notation, we would use the notation $C^{\,0,\alpha}(\Omega)$ to denote the Folland-Stein classes (cf. \cite{Folland-Stein_book}), which are H\"{o}lder spaces with respect to the CC-metric, defined as   
		\begin{equation}\label{def:holderspace}
			C^{\,0,\alpha}(\Omega) =
			\left\lbrace u\in L^{\infty}(\Omega): \sup_{x, y\in \Omega, x\neq y} \frac{|u(x)-u(y)|}{d_{\text{CC}}(x,y)^\alpha}< +\infty\right\rbrace 
		\end{equation} 
		for $0<\alpha \leq 1$, where $\Omega \subset \mathbb{H}_{n}.$
		These are Banach spaces with the norm 
		\begin{equation*}
			\|u\|_{C^{\,0,\alpha}(\Omega)}
			=  \|u\|_{L^\infty(\Omega)}+ \left[ u\right]_{C^{0, \alpha}\left( \Omega\right)},
		\end{equation*}
		where the Folland-Stein seminorm is defined as 
		\begin{align*}
			\left[ u\right]_{C^{0, \alpha}\left( \Omega\right)} := \sup_{x,y\in\Omega, x\neq y} 
			\frac{|u(x)-u(y)|}{d_{\text{CC}}(x,y)^\alpha}. 
		\end{align*}
		
		\item For any uniformly continuous function $a:\Omega \rightarrow \mathbb{R},$ the modulus of continuity of $a$, denoted $\omega_{a, \Omega}$ is a concave increasing function $\omega_{a, \Omega}: [0, \infty) \rightarrow [0, \infty),$ defined by 
		\begin{align*}
			\omega_{a, \Omega}\left( r \right):=  \sup\limits_{\substack{x,y \in \Omega,\\ d_{\text{CC}}(x,y) \leq r }} \left\lvert a\left(x\right) - a\left( y\right)\right\rvert \qquad \text{ for all  } r>0,
		\end{align*}
		and we set $\omega_{a, \Omega}\left(0\right)= 0$ by convention. We would often denote the modulus of continuity by $\omega_{a}$ when the domain is clear and also just $\omega$ when the function is also clear from the context.  It is easy to see that by definition, $a \in C^{0, \alpha} \left( \Omega\right)$ if and only if $\omega_{a, \Omega} \left(r\right) \leq C r^{\alpha}$ for some constant $C>0$ and the smallest such constant is equal to the seminorm $\left[ a\right]_{C^{0, \alpha}\left( \Omega\right)}.$
	\end{itemize}
	\section{Preliminaries}\label{prelim}
	\subsection{Function spaces}
	\subsubsection{Morrey-Campanato spaces}
	Now, assume $\Omega \subset \mathbb{H}_{n}$ is any open subset such that there exists a constant $A = A\left( \Omega\right)>0$ such that for any $x_{0} \in \Omega$ and any $r < \operatorname{diam} \Omega,$ we have  
	\begin{align*}
		\left\lvert B_{r}\left(x_{0}\right) \cap \Omega \right\rvert \geq Ar^{Q}. 
	\end{align*} 
	This property is clearly satisfied if $\Omega$ itself is a metric ball. 
	\begin{definition}[Morrey and Campanato spaces]
		For $1 \leq p <  \infty$ and $\lambda \geq 0,$  $\mathrm{L}^{p,\lambda}\left(\Omega\right) $ stands for the Morrey space of all $u \in L^{p}\left(\Omega \right)$ such that 
		$$ \lVert u \rVert_{\mathrm{L}^{p,\lambda}\left(\Omega\right)}^{p} := \sup_{\substack{ x_{0} \in \Omega,\\ \rho >0 }} 
		\rho^{-\lambda} \int_{B_{\rho}(x_{0}) \cap \Omega} \lvert u \rvert^{p} < \infty, $$ endowed with the norm 
		$ \lVert u \rVert_{\mathrm{L}^{p,\lambda}}$ and $\mathcal{L}^{p,\lambda}\left(\Omega\right) $ denotes the Campanato space of all $u \in L^{p}\left(\Omega\right)$ such that 
		$$ [u ]_{\mathcal{L}^{p,\lambda}\left(\Omega\right)}^{p} := \sup_{\substack{ x_{0} \in \Omega,\\  \rho >0 }} 
		\rho^{-\lambda} \int_{B_{\rho}(x_{0}) \cap \Omega} \lvert u  - (u)_{ \rho , x_{0},\Omega}\rvert^{p} < \infty, $$ endowed with the norm 
		$ \lVert u \rVert_{\mathcal{L}^{p,\lambda}} := \lVert  u \rVert_{L^{p}} +  
		[u ]_{\mathcal{L}^{p,\lambda}}.$ Here 
		\begin{align*}
			(u)_{ \rho , x_{0},\Omega} = \frac{1}{ \left\lvert \left( B_{\rho}(x_{0}) \cap \Omega \right) \right\rvert}\int_{B_{\rho}(x_{0}) \cap \Omega} u  = \fint_{B_{\rho}(x_{0}) \cap \Omega} u .
		\end{align*}
	\end{definition}
	We define $\rm{BMO}\left(\Omega\right):= \mathcal{L}^{1,Q}\left(\Omega\right).$ These definitions are extended componentwise for vector-valued functions. We record the following facts, which are proved exactly in the same manner as their Euclidean counterparts in view of \eqref{measure of balls}. 
	\begin{proposition}\label{Morrey-Campanato theorem}
		For $0 \leq \lambda < Q,$ we have 
		\begin{align*}
			\mathrm{L}^{p,\lambda}\left(\Omega\right) \simeq	\mathcal{L}^{p,\lambda}\left(\Omega\right) \qquad \text{ with equivalent norms}.\end{align*}
		For all $1 \leq p < \infty,$ we have 
		\begin{align*}
			\mathcal{L}^{p,Q}\left(\Omega\right) \simeq \rm{BMO}\left(\Omega\right) \qquad \text{ with equivalent seminorms}.
		\end{align*}For $Q< \lambda \leq Q+p, $ we have 
		\begin{align*}
			C^{0, \frac{\lambda -Q}{p}}\left(\Omega\right) \simeq	\mathcal{L}^{p,\lambda}\left(\Omega\right) \qquad \text{ with equivalent seminorms}.\end{align*}
	\end{proposition}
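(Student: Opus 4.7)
The proposition is the classical Morrey-Campanato-John-Nirenberg trilogy, and the only geometric ingredient it requires is the Ahlfors $Q$-regularity $|B_r| = c_n r^Q$ recorded in \eqref{measure of balls}, together with the doubling property it implies. I would therefore adapt the Euclidean arguments verbatim, replacing Euclidean balls and radii by their CC counterparts, and dispatch the three regimes $\lambda < Q$, $Q < \lambda \le Q+p$ and $\lambda = Q$ separately.

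For $\lambda < Q$ and $\lambda > Q$ the trivial inclusions $\mathrm{L}^{p,\lambda} \hookrightarrow \mathcal{L}^{p,\lambda}$ and $C^{0, (\lambda-Q)/p} \hookrightarrow \mathcal{L}^{p,\lambda}$ follow at once from \eqref{minimality of mean} and the triangle inequality. The reverse inclusions hinge on the single dyadic telescoping estimate
\[
	\bigl| (u)_{B_{2r}(x_0) \cap \Omega} - (u)_{B_{r}(x_0) \cap \Omega} \bigr| \le C\, r^{(\lambda-Q)/p}\, [u]_{\mathcal{L}^{p,\lambda}(\Omega)},
\]
obtained by bounding the difference of averages on the smaller ball by a $p$-mean oscillation on the larger one via H\"older and \eqref{measure of balls}. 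When $\lambda < Q$ the exponent $(\lambda-Q)/p$ is negative, and summing along the chain $B_{2^k \rho}(x_0)$ up to a reference radius $R_0$ bounded by $\operatorname{diam}\Omega$ yields $|(u)_{B_\rho(x_0)}| \le C\bigl(\rho^{(\lambda-Q)/p}[u]_{\mathcal{L}^{p,\lambda}} + R_0^{-Q/p}\|u\|_{L^p}\bigr)$; reinserting this into the split $\int_{B_\rho \cap \Omega} |u|^p \le 2^{p-1}\int_{B_\rho \cap \Omega} |u - (u)_{B_\rho}|^p + 2^{p-1}|B_\rho| |(u)_{B_\rho}|^p$ gives the Morrey bound, since $\rho^Q \le \rho^\lambda R_0^{Q-\lambda}$ whenever $\rho \le R_0$. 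When $\lambda > Q$ the exponent is positive and the same estimate is a convergent geometric series as $r \to 0$, so $(u)_{B_r(x_0)}$ is Cauchy; its limit $u^*$ coincides with $u$ a.e.\ by the Lebesgue differentiation theorem for doubling metric measure spaces, and the H\"older seminorm of $u^*$ is read off by comparing $(u)_{B_r(x)}$ and $(u)_{B_r(y)}$ through a common parent ball $B_{3r}$ whenever $d_{\text{CC}}(x,y) \le r$.

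The critical case $\lambda = Q$ is the main obstacle. The inclusion $\mathcal{L}^{p,Q} \hookrightarrow \rm{BMO}$ is again a single application of H\"older; the reverse, i.e.\ the self-improvement from $L^1$ to $L^p$ oscillation, is the John-Nirenberg exponential distributional inequality. On $(\mathbb{H}_n, d_{\text{CC}})$ I would prove it by the same Calder\'on-Zygmund stopping-time argument used in the Euclidean setting, carried out on Christ's dyadic system, which exists on any doubling metric measure space and at every scale behaves exactly as the Euclidean dyadic cubes. The resulting inequality yields $\|u\|_{\mathcal{L}^{p,Q}(\Omega)} \le C(p,Q)\, \|u\|_{\rm{BMO}(\Omega)}$, closing the triple equivalence.
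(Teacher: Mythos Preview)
Your proposal is correct and follows exactly the approach the paper indicates: the paper does not give a proof at all, but merely records that ``these facts are proved exactly in the same manner as their Euclidean counterparts in view of \eqref{measure of balls}'' and refers to \cite{giaquinta-martinazzi-regularity} and \cite{Capogna_Danielli_book} for details. Your sketch --- telescoping averages along dyadic radii for the $\lambda \neq Q$ regimes and John--Nirenberg via Christ cubes for $\lambda = Q$ --- is the standard route and is more detailed than what the paper itself provides.
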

	For details on classical Morrey and Campanato spaces, we refer to 
	\cite{giaquinta-martinazzi-regularity} and for the sub-elliptic setting we refer to \cite{Capogna_Danielli_book}.
	\subsubsection{Horizontal Sobolev spaces and Sobolev inequalities}
	Let $\Omega \subset \mathbb{H}_{n}$ be open. For $1 \leq p \leq \infty,$ the usual $L^{p}$ spaces $L^{p}\left(\Omega\right)$ is defined in the usual manner and extended componentwise to vector-valued functions. 
	\begin{definition}[Horizontal Sobolev spaces]
		For $ 1\leq p < \infty$, the \textit{Horizontal Sobolev space} $HW^{1,p}(\Omega)$ consists
		of functions $u\in L^p(\Omega)$ such that the distributional horizontal gradient $\X u$ is in $L^p(\Omega\,,\mathbb{R}^{2n})$.
		$HW^{1,p}(\Omega)$ is a Banach space with respect to the norm
		\begin{equation}\label{eq:sob norm}
			\| u\|_{HW^{1,p}(\Omega)}= \ \| u\|_{L^p(\Omega)}+\| \X u\|_{L^p(\Omega,\mathbb{R}^{2n})}.
		\end{equation}
		We define $HW^{1,p}_{\text{loc}}(\Omega)$ as its local variant and 
		$HW^{1,p}_0(\Omega)$ as the closure of $C^\infty_0(\Omega)$ in 
		$HW^{1,p}(\Omega)$ with respect to the norm in \eqref{eq:sob norm}.
	\end{definition}
	The definition is extended componentwise to vector valued functions as well. We would need the following Poincar\'{e} and Poincar\'{e}-Sobolev type inequalities for Heisenberg groups, which are written in a scale-invariant form. 
	\begin{proposition}[Poincar\'{e} inequality with means]\label{poincarewithmeans}
		Let $B_{R} \subset \mathbb{H}_{n}$ be any ball of radius $R>0.$ Let $1 \leq s <  \infty.$ Then  there exists a constant $c >0,$ depending only on $n$ and $s$ such that for any $u \in HW^{1,s}\left( B_{R} \right),$ we have
		\begin{equation}\label{poincareineqwithmeans}
			\left( \int_{B_{R}} \lvert u - \left( u\right)_{B_{R}} \rvert^{s} \right)^{\frac{1}{s}} \leq c R \left( \int_{B_{R}} \lvert \X u \rvert^{s} \right)^{\frac{1}{s}}. 
		\end{equation}
	\end{proposition}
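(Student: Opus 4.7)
The plan is to reduce to the unit-ball case via left-translation and the anisotropic Heisenberg dilations, and then invoke the classical subelliptic Poincar\'{e} inequality on the unit ball. Both reductions are quantitative and preserve the constant, so all that is really at stake is the fixed-radius inequality.

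First, since $d_{\text{CC}}$ and the horizontal vector fields $X_1, \dots, X_{2n}$ are all left-invariant, I may assume without loss of generality that $B_R$ is centered at the group identity. Next, consider the anisotropic dilations $\delta_R(x_1, \dots, x_{2n}, t) := (Rx_1, \dots, Rx_{2n}, R^2 t)$. These are group automorphisms with $d_{\text{CC}}(\delta_R x, \delta_R y) = R\, d_{\text{CC}}(x, y)$, so $\delta_R$ maps $B_1$ bijectively onto $B_R$; the Jacobian is constant and equal to $R^Q$ where $Q = 2n+2$. Setting $v := u \circ \delta_R$, a direct computation from the explicit form of the $X_i$ gives the scaling identity $\X v(y) = R\, (\X u)(\delta_R y)$, and the constancy of the Jacobian yields $(u)_{B_R} = (v)_{B_1}$. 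A routine change of variables in both integrals then shows that the desired inequality on $B_R$ is equivalent to the same inequality for $v$ on $B_1$, with the factor $R$ on the right-hand side arising precisely from the $R$ in the scaling of $\X$.

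It remains to prove the statement for $R = 1$. This is a standard $(s,s)$-Poincar\'{e} inequality for the left-invariant horizontal frame on $\mathbb{H}_n$, essentially the classical result of Jerison for systems of H\"ormander vector fields applied to the Heisenberg group. This is the only genuinely non-trivial step: because the $X_i$ do not span the tangent space pointwise, the inequality cannot be deduced by one-dimensional Poincar\'{e} along coordinate lines. Jerison's argument instead represents $u - (u)_{B_1}$ as an integral of $\X u$ along sub-Riemannian geodesics and then uses a Boman-type chaining argument to pass from a local estimate on an appropriate sub-ball of $B_1$ to the full unit ball. This chaining step is, from the point of view of a self-contained proof, the main obstacle; in the present article the clean route is to invoke it as a known fact and present only the dilation reduction explicitly, which yields the asserted dependence of the constant on $n$ and $s$ alone.
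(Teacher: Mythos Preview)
Your approach is essentially the same as the paper's: reduce to the unit ball via left-translation and the homogeneous dilations, then invoke Jerison's Poincar\'{e} inequality for H\"ormander vector fields on $B_1$. The only addition in the paper is an explicit citation of the global smooth approximation result of Garofalo--Nhieu, used to pass from the smooth functions on which Jerison's inequality is stated to arbitrary $u \in HW^{1,s}(B_1)$; you may want to mention this density step rather than leaving it implicit.
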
 
	\begin{proposition}[Poincar\'{e}-Sobolev inequality]\label{poincaresobolev}
		Let $B_{R} \subset \mathbb{H}_{n}$ be any ball of radius $R>0.$ Let $1 \leq s <  Q.$ Then there exists a constant $c >0,$ depending only on $n$ and $s$ such that for any $u \in HW^{1,s}_{0}\left( B_{R} \right),$ we have
		\begin{equation}\label{poincaresobolevineq}
			\left( \fint_{B_{R}} \lvert u \rvert^{\frac{Qs}{Q-s}} \right)^{\frac{Q-s}{Qs}} \leq c R \left( \fint_{B_{R}} \lvert \X u \rvert^{s} \right)^{\frac{1}{s}}. 
		\end{equation}
	\end{proposition}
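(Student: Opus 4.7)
The plan is to reduce the scale-invariant inequality to the $R=1$ case via the Heisenberg dilations $\delta_R$, and then to invoke the absolute Sobolev-Gagliardo-Nirenberg inequality on $\mathbb{H}_n$. By left-invariance of $d_{\text{CC}}$ and of $X_1,\ldots,X_{2n}$, I would assume without loss of generality that $B_R$ is centered at the neutral element $0$. Given $u \in HW^{1,s}_0(B_R(0))$, I extend $u$ by zero to all of $\mathbb{H}_n$ and set $\tilde u(x) := u(\delta_R x)$, which is then supported in $B_1(0)$. A direct computation using $\delta_R(x_1,\ldots,x_{2n},t) = (Rx_1,\ldots,Rx_{2n},R^2 t)$ produces the chain rule
\[
X_i \tilde u(x) = R\,(X_i u)(\delta_R x), \qquad i=1,\ldots,2n.
\]
Combined with $|B_R|=c_n R^Q$ (equivalently, the Jacobian of $\delta_R$ equals $R^Q$), a change of variables converts \eqref{poincaresobolevineq} into its absolute $R=1$ counterpart applied to $\tilde u$.

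For the absolute inequality $\|v\|_{L^{Qs/(Q-s)}(\mathbb{H}_n)} \leq c(n,s)\|\mathfrak X v\|_{L^s(\mathbb{H}_n)}$ on $v \in C_c^\infty(\mathbb{H}_n)$ (to which the case of $\tilde u$ reduces by density), the route I would take uses the fundamental solution of the sub-Laplacian. The operator $\mathcal{L} = -\sum_{i=1}^{2n} X_i^2$ admits a smooth fundamental solution $\Gamma$ which is $\delta_\lambda$-homogeneous of degree $2-Q$, so $\mathfrak X\Gamma$ is homogeneous of degree $1-Q$ and lies in the weak Lorentz space $L^{Q/(Q-1),\infty}(\mathbb{H}_n)$. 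Writing $v = \Gamma \ast \mathcal{L} v$ and integrating by parts yields the pointwise bound
\[
|v(x)| \leq c \int_{\mathbb{H}_n} \frac{|\mathfrak X v(y)|}{d_{\text{CC}}(x,y)^{Q-1}}\,\mathrm{d}y,
\]
that is, $|v|$ is dominated by the Riesz potential of order $1$ of $|\mathfrak X v|$ on the homogeneous space $(\mathbb{H}_n, d_{\text{CC}}, dx)$ of dimension $Q$. The Hardy-Littlewood-Sobolev inequality in this homogeneous setting then delivers the $L^{Qs/(Q-s)}$ bound for $1 < s < Q$; the endpoint $s=1$ would be handled separately, e.g.~via Pansu's isoperimetric inequality on $\mathbb{H}_n$ together with a Fleming-Rishel / Cavalieri slicing argument.

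The only non-trivial input here is the absolute Sobolev embedding on $\mathbb{H}_n$, which is classical (Folland-Stein \cite{Folland-Stein_book}; see also \cite{Capogna_Danielli_book}); everything else is bookkeeping with Heisenberg dilations and Jacobians. Since the proposition is invoked as a standard preliminaries tool, the cleanest write-up simply cites the absolute inequality and then supplies the scaling normalization above.
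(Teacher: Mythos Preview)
Your proposal is correct and follows essentially the same route as the paper: reduce to $R=1$ via the Heisenberg dilations $\delta_R$, then invoke a known Sobolev inequality for the unit-scale case. The only minor difference is in the citation for the base case---the paper appeals to Lu's local Poincar\'e--Sobolev inequality \cite[Theorem 2.1]{lu-poincare-sobolev} together with the global approximation result of Garofalo--Nhieu \cite[Theorem 1.7]{Garofalo-Nhieu-Global-Approximation}, whereas you extend by zero and use the global Folland--Stein Sobolev embedding on all of $\mathbb{H}_n$; both are standard and equally valid for functions in $HW^{1,s}_0$.
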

	\begin{proposition}[Poincar\'{e}-Sobolev inequality with means]\label{poincaresobolevwithmeans}
		Let $B_{R} \subset \mathbb{H}_{n}$ be any ball of radius $R>0.$ Let $1 \leq s <  Q.$ Then there exists a constant $c >0,$ depending only on $n$ and $s$ such that for any $u \in HW^{1,s}\left( B_{R} \right),$ we have
		\begin{equation}\label{poincaresobolevineqwithmeans}
			\left( \fint_{B_{R}} \lvert u - \left( u\right)_{B_{R}} \rvert^{\frac{Qs}{Q-s}} \right)^{\frac{Q-s}{Qs}} \leq c R \left( \fint_{B_{R}} \lvert \X u \rvert^{s} \right)^{\frac{1}{s}}. 
		\end{equation}
	\end{proposition}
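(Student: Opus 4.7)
The plan is to derive \eqref{poincaresobolevineqwithmeans} by combining the zero-boundary Sobolev-Poincar\'e estimate of Proposition \ref{poincaresobolev} with the Poincar\'e inequality with means of Proposition \ref{poincarewithmeans}, via a cutoff argument followed by ball-chaining to promote the estimate from a sub-ball up to the full ball $B_R$.

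\textbf{Step 1 (cutoff to reduce to the zero-boundary case).} Fix $\eta\in C_c^\infty(B_R)$ with $\eta\equiv 1$ on $B_{R/2}$, $0\le\eta\le 1$ and $|\X\eta|\le C/R$, and set $v:=\eta\,(u-(u)_{B_R})\in HW^{1,s}_0(B_R)$. Proposition \ref{poincaresobolev} yields
$$
\Big(\fint_{B_R}|v|^{\frac{Qs}{Q-s}}\Big)^{\frac{Q-s}{Qs}} \le cR\Big(\fint_{B_R}|\X v|^s\Big)^{1/s}.
$$
Since $|\X v|\le|\X u|+C\,|u-(u)_{B_R}|/R$, Proposition \ref{poincarewithmeans} bounds the right-hand side by $cR\,(\fint_{B_R}|\X u|^s)^{1/s}$. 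Restricting the left-hand integral to $B_{R/2}$ (where $v=u-(u)_{B_R}$) gives the desired inequality on the half-radius ball.

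\textbf{Step 2 (chaining to the full ball).} To upgrade from $B_{R/2}$ to $B_R$ I would employ a standard chaining argument for spaces of homogeneous type. Cover $B_R$ by a countable family of CC-balls $\{B_i\}$ whose radii $r_i$ are comparable to $d_{\text{CC}}(\cdot,\partial B_R)$, with bounded overlap and the property that $2B_i\subset B_R$. On each $B_i$ the sub-ball estimate from Step 1, applied with $B_R$ replaced by $2B_i$, controls $\|u-(u)_{2B_i}\|_{L^{Qs/(Q-s)}(B_i)}$ by $cr_i\|\X u\|_{L^s(2B_i)}$. Telescoping the differences of the means $(u)_{2B_i}-(u)_{B_R}$ along the chain—bounded via Proposition \ref{poincarewithmeans} and the doubling property \eqref{measure of balls}—then assembles the local pieces into the scale-invariant global inequality with mean $(u)_{B_R}$ on the whole $B_R$.

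\textbf{Main obstacle and alternative.} The chaining of Step 2 is where all the geometric input enters and is the principal reason why promoting the zero-boundary inequality to the mean-valued form is less automatic than in the Euclidean case. A cleaner route, which I would likely adopt in practice, avoids the chaining entirely by starting from the pointwise Riesz-potential representation
$$
|u(x)-(u)_{B_R}|\le C\int_{B_R}\frac{|\X u(y)|}{d_{\text{CC}}(x,y)^{Q-1}}\,\mathrm{d}y\qquad\text{for a.e. }x\in B_R,
$$
which holds on Heisenberg groups, and then invoking the Hardy-Littlewood-Sobolev theorem for the Riesz potential $I_1$ on the space of homogeneous type $(\mathbb{H}_n,d_{\text{CC}},dx)$ of dimension $Q$. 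Combined with \eqref{measure of balls} and the elementary identity $\tfrac{1}{s}-\tfrac{Q-s}{Qs}=\tfrac{1}{Q}$, this directly produces the factor $R$ and yields \eqref{poincaresobolevineqwithmeans}; the endpoint $s=1$ is handled by the usual truncation-on-level-sets argument.
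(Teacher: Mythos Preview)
Your argument is correct, but it follows a different path than the paper. The paper does not give a self-contained proof: it reduces to the unit ball via the Heisenberg dilations $\delta_R$ (which send $B_1$ to $B_R$, rescale Lebesgue measure by $R^{Q}$, and rescale $\X$ by $R^{-1}$), and for $R=1$ it simply cites Lu's sharp Poincar\'e--Sobolev inequality \cite[Theorem~2.1]{lu-poincare-sobolev} together with the Garofalo--Nhieu global density theorem \cite[Theorem~1.7]{Garofalo-Nhieu-Global-Approximation}. Your route instead takes Propositions~\ref{poincaresobolev} and~\ref{poincarewithmeans} as black boxes and combines them: the cutoff step produces a \emph{weak} $(s^\ast,s)$-Poincar\'e inequality (left side on $B_{R/2}$, right side on $B_R$), and the Whitney/Boman chaining upgrades it to the strong form on the full ball---which is legitimate here because CC-balls are John domains. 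Your Riesz-potential alternative is also valid in $\mathbb{H}_n$ and is arguably the cleanest. The trade-off: the paper's scaling argument is a one-liner but uses the exact homogeneity of $\mathbb{H}_n$, whereas your approach is longer yet transfers to any doubling metric measure space supporting the two preceding propositions. Both are standard; no new idea is at stake in either.
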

	The scale invariant forms in \eqref{poincareineqwithmeans}, \eqref{poincaresobolevineq}, \eqref{poincaresobolevineqwithmeans} can be derived by using their corresponding versions when $R=1.$ For the case $R=1$, we can use the Poincar\'e inequality established in \cite{Jerison_Poincare} and the global density result in \cite[Theorem 1.7]{Garofalo-Nhieu-Global-Approximation} to derive \eqref{poincareineqwithmeans}. Additionally, in this case, by using \cite[Theorem 2.1]{lu-poincare-sobolev} and \cite[Theorem 1.7]{Garofalo-Nhieu-Global-Approximation} we can obtain \eqref{poincaresobolevineq} and \eqref{poincaresobolevineqwithmeans}. When $s > Q,$ we also have the following analogue of Morrey's theorem. 
	\begin{proposition}\label{Morrey's theorem}
		Let $\Omega \subset \mathbb{H}_{n}$ be open and let $u \in HW^{1,s}_{\text{loc}}\left( \Omega\right)$ with $s > Q.$ Then $u \in C^{0, \frac{s-Q}{s}}\left( \Omega\right)$ and for any metric ball $B_{2R} \subset \Omega,$ we have the estimate 
		\begin{align*}
			\left\lVert u \right\rVert_{C^{0,\frac{s-Q}{s} }\left( \overline{B_{R/2}}\right)} \leq C(R, Q, s) \left\lVert u \right\rVert_{HW^{1,s}\left(\Omega\right)}. 
		\end{align*} 
	\end{proposition}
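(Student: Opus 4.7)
The strategy is to route the proof through the Campanato characterization of Hölder spaces recorded in Proposition \ref{Morrey-Campanato theorem}, using the Poincaré inequality with means (Proposition \ref{poincarewithmeans}) as the workhorse. Concretely, for a metric ball $B_{2R}\subset\Omega$, I will show that $u$ lies in the Campanato space $\mathcal{L}^{s,s}(B_{R})$ with a seminorm controlled by $\|\X u\|_{L^{s}(B_{2R})}$; since the hypothesis $s>Q$ places $\lambda=s$ in the range $(Q,Q+s]$, the equivalence $\mathcal{L}^{s,s}(B_{R})\simeq C^{0,(s-Q)/s}(B_{R})$ from Proposition \ref{Morrey-Campanato theorem} then yields the Folland--Stein Hölder regularity with the correct exponent.

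\textbf{Step 1 (Campanato estimate on small balls).} For any $x_{0}\in B_{R}$ and any $0<\rho\le R$ with $B_{\rho}(x_{0})\subset B_{2R}$, I would apply Poincaré with means to obtain
\begin{align*}
\int_{B_{\rho}(x_{0})}\lvert u-(u)_{x_{0},\rho}\rvert^{s}\,\mathrm{d}x
\le c\,\rho^{s}\int_{B_{\rho}(x_{0})}\lvert\X u\rvert^{s}\,\mathrm{d}x
\le c\,\rho^{s}\,\|\X u\|_{L^{s}(B_{2R})}^{s},
\end{align*}
so that $\rho^{-s}\int_{B_{\rho}(x_{0})}\lvert u-(u)_{x_{0},\rho}\rvert^{s}\le c\,\|\X u\|_{L^{s}(B_{2R})}^{s}$, the constant depending only on $n,s$. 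For radii $\rho$ so large that $B_{\rho}(x_{0})\not\subset B_{2R}$, I replace $B_{\rho}(x_{0})$ by the intersection $B_{\rho}(x_{0})\cap B_{R}$ in the Campanato supremum; since $B_{R}$ has only finitely many ``scales'' (the relevant radii are comparable to $R$), this case is absorbed by a routine argument using the measure-density property of the ball $B_{R}$ and the Poincaré inequality on $B_{R}$ itself. This establishes
\begin{align*}
[u]_{\mathcal{L}^{s,s}(B_{R})}\le c\left(R^{-\frac{Q}{s}}\|u\|_{L^{s}(B_{2R})}+\|\X u\|_{L^{s}(B_{2R})}\right).
\end{align*}

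\textbf{Step 2 (Hölder embedding and the $L^{\infty}$ part of the norm).} Invoking Proposition \ref{Morrey-Campanato theorem} with $p=s$ and $\lambda=s$ (legitimate because $Q<s\le Q+s$) converts the $\mathcal{L}^{s,s}$ bound into the Folland--Stein seminorm estimate $[u]_{C^{0,(s-Q)/s}(\overline{B_{R/2}})}\lesssim [u]_{\mathcal{L}^{s,s}(B_{R})}$. To upgrade this to a full $C^{0,\alpha}$ norm on $\overline{B_{R/2}}$, I bound the average $(u)_{B_{R}}$ by $R^{-Q/s}\|u\|_{L^{s}(B_{2R})}$ via Hölder's inequality and combine with the Hölder oscillation, which is at most $R^{(s-Q)/s}[u]_{C^{0,(s-Q)/s}(\overline{B_{R/2}})}$. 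Collecting the terms produces the claimed scale-invariant bound by $\|u\|_{HW^{1,s}(\Omega)}$ with a constant that depends explicitly on $R,Q,s$.

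\textbf{Expected difficulty.} The proof is essentially a mechanical application of Poincaré plus the already-established Morrey--Campanato equivalence, so no serious obstacle is anticipated. The only point requiring a little care is verifying the measure-density hypothesis that permits using Proposition \ref{Morrey-Campanato theorem} on the sub-ball $B_{R}$, and in book-keeping the dependence of constants on $R$ to obtain the scale-invariant form stated. Both of these are routine in view of \eqref{measure of balls} and the left-invariance of the CC-metric, and no Heisenberg-specific subtlety beyond what is already recorded in the Preliminaries enters the argument.
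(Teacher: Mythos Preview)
Your approach is correct and essentially identical to the paper's: both apply the Poincaré inequality with means to obtain the Campanato decay $\int_{B_\rho}|u-(u)_\rho|^s\le c\rho^s\int_{B_R}|\X u|^s$, and then invoke the Campanato--H\"older equivalence in Proposition \ref{Morrey-Campanato theorem} for $\lambda=s>Q$. The paper's proof is terser, omitting the large-radius and $L^\infty$ bookkeeping you spell out, but the argument is the same.
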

	\begin{proof}
		By the Poincar\'{e} inequality \eqref{poincarewithmeans}, we have 
		\begin{align*}
			\int_{B_{\rho}} \left\lvert u - (u)_{\rho} \right\rvert^{s} \leq c\rho^{s} \int_{B_{\rho}} \left\lvert \X u \right\rvert^{s} \leq c\rho^{s} \int_{B_{R}} \left\lvert \X u \right\rvert^{s},
		\end{align*}
		for any $0 < \rho < R,$ whenever the ball  $B_{2R} \subset \Omega.$ This estimate implies, in the standard way,  that $u \in \mathcal{L}^{s,s}_{\text{loc}}\left(\Omega\right)$. Since $s >Q,$ the second conclusion of Proposition \ref{Morrey-Campanato theorem} implies the result. 
	\end{proof}
	\begin{remark}\label{p uniform constants Sobolev inequalities}
		The constants in each of the propositions above in this subsection depends on $s.$ However, each of them can be proved in a constructive manner (i.e. without using any contradiction-compactness argument) and thus the constants are explicitly quantifiable. Thus, if $1 < \gamma_{1} \leq s \leq \gamma_{2}< \infty,$ the constants can all be chosen (by replacing them with larger or smaller constants if necessary) to depend only $\gamma_{1}, \gamma_{2}$ such that the estimates are uniform in $s \in [\gamma_{1}, \gamma_{2}].$ This is extremely crucial for us in this variable exponent setting.
	\end{remark}
	\subsubsection{Variable exponent spaces on \texorpdfstring{$\mathbb{H}_{n}$}{Hn}}\label{variable exponent spaces}
	Let $\Omega \subset \mathbb{H}_{n}$ be open and bounded and let $1 < \gamma_{1} \leq \gamma_{2} < \infty.$ 
	\begin{definition}
		For any uniformly continuous (and thus Lebesgue measurable) function $p:\Omega \rightarrow [\gamma_{1}, \gamma_{2}], $ we say 
		\begin{enumerate}[(i)]
			\item $p$ is \emph{log-H\"{o}lder continuous},  denoted by $p \in \mathcal{P}^{\log}_{[\gamma_{1}, \gamma_{2}]}\left(\Omega \right),$ if  \begin{align}\label{log holder def}
				\lim\limits_{R \rightarrow 0} \omega_p\left(R\right)\log \left(\frac{1}{R}\right) < +\infty.  
			\end{align}
			\item $p$ is \emph{vanishing log-H\"{o}lder continuous} if 
			\begin{align}\label{log holder vanishing}
				\lim\limits_{R \rightarrow 0} \omega_p\left(R\right)\log \left(\frac{1}{R}\right) = 0 . 
			\end{align}
			\item  $p$ is \emph{log-Dini continuous} if there exists $R>0$ such that we have 
			\begin{align}\label{log Dini}
				\int_{0}^{R}\omega_p\left(\rho\right)\log \left(\frac{1}{\rho}\right)\ \frac{\mathrm{d}\rho}{\rho} < \infty.  
			\end{align}	
			\item $p$ is assumed to be H\"{o}lder continuous (with respect to the CC-matric) with exponent $\alpha \in (0,1),$ if there exists a constant $M>0$ such that 
			\begin{align}\label{holder exponent}
				\omega_p\left(R\right) \leq M R^{\alpha} \qquad \text{ for all } 0 < R \leq \operatorname{diam} \Omega.  
			\end{align} 
		\end{enumerate} 
	\end{definition}
	\begin{remark}
		Since $\Omega$ is bounded and $\gamma_{2}<\infty,$ the condition \eqref{log holder def} is equivalent to the usual definition of log-H\"{o}lder continuity (cf. \cite{Diening_et_al_variable_exponent} for the Euclidean case). Similarly, due to \eqref{euclidean and cc distance}, the conditions \eqref{log holder vanishing} and \eqref{log Dini} are equivalent to the  usual definitions of vanishing of log-H\"older continuity and log-Dini continuity.
	\end{remark} 
	\begin{remark}
		Note that the conditions \eqref{log holder def}, \eqref{log holder vanishing}, \eqref{log Dini} and \eqref{holder exponent} are progressively stronger requirements. 
	\end{remark}
	\begin{definition}
		Given any $p \in \mathcal{P}^{\log}_{[\gamma_{1}, \gamma_{2}]}\left(\Omega \right),$ we define the variable exponent Lebesgue space $L^{p(\cdot)}\left( \Omega\right)$ with exponent function $p$ as 
		\begin{align*}
			L^{p(\cdot)}\left( \Omega\right):= \left\lbrace u:\Omega \rightarrow \mathbb{R} \text{ measurable}: \int_{\Omega} \left\lvert u\left(x\right)\right\rvert^{p\left(x\right)}\ \mathrm{d}x < \infty\right\rbrace. 
		\end{align*}
		We equip this space with the Luxemburg norm 
		\begin{align*}
			\left\lVert u \right\rVert_{	L^{p(\cdot)}\left( \Omega\right)} := \inf \left\lbrace \lambda >0: \int_{\Omega} \left( \frac{\left\lvert u\left(x\right)\right\rvert}{\lambda}\right)^{p\left(x\right)}\ \mathrm{d}x \leq 1\right\rbrace.  
		\end{align*}
	\end{definition}
	This definition is exactly the same as the Euclidean one and exactly in the same manner and it follows form \cite{Diening_et_al_variable_exponent}) that for $p \in \mathcal{P}^{\log}_{[\gamma_{1}, \gamma_{2}]}\left(\Omega \right),$ the space $L^{p(\cdot)}\left( \Omega\right)$, equipped with the above norm is a reflexive, separable Banach space. 
	The definition extends in the obvious manner to vector-valued functions by requiring each component to be in $L^{p(\cdot)}\left( \Omega\right).$
	When $U \subset \mathbb{R}^{n},$ the usual variable exponent Sobolev spaces are defined as 
	\begin{align*}
		W^{1, p(\cdot)}\left( U\right):= \left\lbrace u \in L^{p(\cdot)}\left( U\right): \left\lVert u \right\rVert_{L^{p(\cdot)}\left( U\right)} + \left\lVert \nabla u \right\rVert_{L^{p(\cdot)}\left(U; \mathbb{R}^{n}\right)}  < \infty \right\rbrace,
	\end{align*}
	where $u$ is implicitly assumed to be weakly differentiable in $U$ and $\nabla u$ stands for the weak gradient of $u.$
	In the case of Heisenberg groups, we define the horizontal variable exponent Sobolev space as 
	\begin{align*}
		HW^{1, p(\cdot)}\left( \Omega \right):= \left\lbrace u \in L^{p(\cdot)}\left( \Omega\right): \left\lVert u \right\rVert_{L^{p(\cdot)}\left( \Omega\right)} + \left\lVert \X u \right\rVert_{L^{p(\cdot)}\left(\Omega; \mathbb{R}^{2n}\right)}  < \infty \right\rbrace,
	\end{align*}
	where $\X u$ denotes the weak horizontal gradient. This space is equipped with the norm 
	\begin{align}\label{HW1px norm}
		\left\lVert u \right\rVert_{HW^{1, p(\cdot)}\left( \Omega \right)}:= \left\lVert u \right\rVert_{L^{p(\cdot)}\left( \Omega\right)} + \left\lVert \X u \right\rVert_{L^{p(\cdot)}\left(\Omega; \mathbb{R}^{2n}\right)}.
	\end{align}The space $	HW^{1, p(\cdot)}_{0}\left( \Omega \right)$ denotes the closure of $C_{c}^{\infty}\left( \Omega\right)$ under the norm \eqref{HW1px norm}.

	\subsubsection{Lorentz-Zygmund spaces}
	\paragraph*{\texorpdfstring{$L\log^{\beta}L$}{LlogbetaL} spaces}
	For $\beta \geq 0 ,$ the Orlicz space $L\log^{\beta}L \left( \Omega; \mathbb{R}^{m}\right)$ is defined as 
	\begin{align*}
		L\log^{\beta}L \left( \Omega; \mathbb{R}^{m}\right): = \left\lbrace f \in L^{1}\left( \Omega; \mathbb{R}^{m}\right): \int_{\Omega} \left\lvert f \right\rvert \log^{\beta}\left( e + \left\lvert f \right\rvert \right)\ \mathrm{d}x < \infty \right\rbrace.  
	\end{align*}
	This is a Banach space equipped with the Luxemburg norm
	\begin{align*}
		\left\lVert f \right\rVert_{L\log^{\beta}L \left( \Omega\right)}:= \inf \left\lbrace \lambda >0: \fint_{\Omega} \left\lvert \frac{f}{\lambda} \right\rvert \log^{\beta}\left( e + \left\lvert \frac{f}{\lambda} \right\rvert \right)\ \mathrm{d}x \leq 1\right\rbrace.  
	\end{align*}
	We also define 
	\begin{align*}
		\left[ f\right]_{L\log^{\beta}L \left( \Omega\right)}:= \fint_{\Omega} \left\lvert f \right\rvert \log^{\beta}\left( e + \frac{\left\lvert f \right\rvert}{\fint_{\Omega}\left\lvert f \right\rvert\ \mathrm{d}x} \right)\ \mathrm{d}x. 
	\end{align*}
	Iwaniec (\cite{IwaniecPharmonic}, \cite{Iwaniec_et_al_MappingsBMOboundeddistortion}, \cite{IwaniecVerde_LlogL}) showed that there exists a constant $c = c\left(\beta\right) \geq 1,$ independent of $\Omega$ and $f$, such that for all $ f \in L\log^{\beta}L \left( \Omega; \mathbb{R}^{m}\right),$ we have 
	\begin{align*}
		\frac{1}{c}\left\lVert f \right\rVert_{L\log^{\beta}L \left( \Omega\right)} \leq 	\left[ f\right]_{L\log^{\beta}L \left( \Omega\right)} \leq c\left\lVert f \right\rVert_{L\log^{\beta}L \left( \Omega\right)} 
	\end{align*}
	For any $p >1,$ $L^{p}\left(\Omega; \mathbb{R}^{m}\right)$ embeds into $L\log^{\beta}L \left( \Omega; \mathbb{R}^{m}\right)$ and we have the following inequality (cf. estimate $(28)$ in \cite{Acerbi_Mingione_CZforpxLaplacian}) 
	\begin{align}\label{LlogbetaL estimate}
		\fint_{\Omega} \left\lvert f \right\rvert \log^{\beta}\left( e + \frac{\left\lvert f \right\rvert}{\fint_{\Omega}\left\lvert f \right\rvert\ \mathrm{d}x} \right)\ \mathrm{d}x \leq c\left(p, \beta\right) \left( \fint_{\Omega} \left\lvert f \right\rvert^{p}\ \mathrm{d}x\right)^{\frac{1}{p}}. 
	\end{align}
	In the same spirit, we also have the following lemma (see Lemma 2.1 in \cite{Baroni_Coscia}). 
	\begin{lemma}\label{Llog beta L lemma}
		Let $\zeta >1$, $\delta, \beta, \tau \geq 0$ and $f$ be a nonnegative function in $L^{\zeta}\left(B_{R}\right)$ for some ball $B_{R}$ with $R \leq 1/e.$ Then there exists a constant $c= c(n, \beta, \delta, \tau, \zeta) >0$ such that 
		\begin{align*}
			\fint_{B_{R}}  f \log^{\beta}\left( e + f^{\delta} \right)\ \mathrm{d}x \leq c \left( 1 + R^{\tau} \left\lVert f \right\rVert_{L^{1}\left(B_{R}\right)}\right)^{\beta} \log^{\beta}\left(\frac{1}{R}\right)\left( \fint_{B_{R}} \left\lvert f \right\rvert^{\zeta}\ \mathrm{d}x\right)^{\frac{1}{\zeta}}. 
		\end{align*}
	\end{lemma}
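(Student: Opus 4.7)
The plan is to normalize $f$ by its mean $\Lambda := \fint_{B_{R}} f$ and exploit the sub-multiplicative-type inequality $\log(e+ab) \leq \log(e+a) + \log(e+b)$ for $a,b \geq 0$, which is immediate from $(e+a)(e+b) \geq e + ab$. Assuming without loss of generality that $\Lambda > 0$ and setting $g := f/\Lambda$ so that $\fint_{B_{R}} g = 1$, the factorization $f^{\delta} = \Lambda^{\delta} g^{\delta}$ together with the above inequality raised to the $\beta$-power gives
\begin{align*}
\log^{\beta}(e+f^{\delta}) \leq C_{\beta} \bigl[\log^{\beta}(e+\Lambda^{\delta}) + \log^{\beta}(e + g^{\delta})\bigr].
\end{align*}
Multiplying by $f = \Lambda g$ and averaging reduces the task to controlling two pieces: $\log^{\beta}(e+\Lambda^{\delta}) \cdot \Lambda$ and $\Lambda \fint_{B_{R}} g\log^{\beta}(e+g^{\delta})$.

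For the first piece, I would use $\Lambda \leq (\fint_{B_{R}} f^{\zeta})^{1/\zeta}$ (Jensen) and the identity $R^{\tau}\|f\|_{L^{1}(B_{R})} = c_{n} R^{\tau + Q}\Lambda$, reducing the task to showing $\log^{\beta}(e+\Lambda^{\delta}) \leq C \log^{\beta}(1/R)(1 + R^{\tau+Q}\Lambda)^{\beta}$. This is dispatched by a case analysis: when $\Lambda \leq R^{-(\tau+Q)}$ the bound $\log(e+\Lambda^{\delta}) \leq C_{\delta,\tau,Q}\log(1/R)$ is immediate using $\log(1/R)\geq 1$; when instead $R^{\tau+Q}\Lambda > 1$ (which forces $\Lambda > 1$ since $R \leq 1/e$), the elementary $\log x \leq x$ for $x\geq 1$, combined with the decomposition $\log \Lambda = \log(R^{\tau+Q}\Lambda) + (\tau+Q)\log(1/R)$, yields $\log(e+\Lambda^{\delta}) \leq C\bigl[R^{\tau+Q}\Lambda + \log(1/R)\bigr] \leq C\log(1/R)(1+R^{\tau+Q}\Lambda)$.

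For the second piece I would split $B_{R}$ into $\{g \leq 2\}$ and $\{g > 2\}$. On the former, $\log^{\beta}(e+g^{\delta})$ is bounded by a constant depending only on $\delta,\beta$, so the contribution is $\leq C\Lambda$. On the latter, Hölder's inequality with exponents $(\zeta,\zeta')$ combined with the elementary bound $\log^{\beta\zeta'}(e+g^{\delta}) \leq C_{\epsilon} g^{\beta\zeta'\epsilon}$, valid for $g\geq 1$ and any $\epsilon > 0$, yields an estimate involving $(\fint g^{\zeta})^{1/\zeta} (\fint g^{\beta\zeta'\epsilon})^{1/\zeta'}$. Choosing $\epsilon$ small enough that $\beta\zeta'\epsilon \leq 1$ and using Jensen with the normalization $\fint g = 1$ gives $\fint g^{\beta\zeta'\epsilon} \leq 1$, so this contribution is at most $C\Lambda(\fint g^{\zeta})^{1/\zeta} = C(\fint f^{\zeta})^{1/\zeta}$. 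Combining the two pieces and absorbing the leftover $C(\fint f^{\zeta})^{1/\zeta}$ term via $\log^{\beta}(1/R)\geq 1$ and $(1+R^{\tau}\|f\|_{L^{1}})^{\beta}\geq 1$ completes the proof.

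The only non-routine step is the case analysis for $\log^{\beta}(e+\Lambda^{\delta})$, where one must verify that the logarithmic growth in $\Lambda$ is always efficiently split into a $\log(1/R)$ factor and an $R^{\tau+Q}\Lambda$ factor; this is exactly where the hypothesis $R \leq 1/e$ (ensuring $\log(1/R)\geq 1$) intervenes.
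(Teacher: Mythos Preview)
Your argument is correct. The normalization by $\Lambda=\fint_{B_R}f$, the sub-multiplicative splitting $\log(e+\Lambda^\delta g^\delta)\le\log(e+\Lambda^\delta)+\log(e+g^\delta)$, the case analysis on $R^{\tau+Q}\Lambda$ (which is where $R\le 1/e$ is genuinely used), and the H\"older-plus-Jensen treatment of $\fint g\log^\beta(e+g^\delta)$ all go through as written; the only minor bookkeeping point is that $R^\tau\lVert f\rVert_{L^1}=c_nR^{\tau+Q}\Lambda$ carries the dimensional constant $c_n$, but this is absorbed into $c$ exactly as you indicate.

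Note that the paper does \emph{not} supply its own proof of this lemma: it is quoted verbatim (with a reference to Lemma~2.1 of Baroni--Coscia) as a known preliminary. Your argument is precisely the standard route used for such $L\log^\beta L$-to-$L^\zeta$ estimates, and in particular your second piece is essentially the cited inequality \eqref{LlogbetaL estimate} of Iwaniec specialized to $g$ with unit mean.
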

	\paragraph*{\texorpdfstring{$L^{p,q}\log L$}{LpqlogL} spaces}
	A good reference for these materials are Chapter 9, \cite{Pick_Kufner_Fucik_Function_spaces}. 	Let $\Omega \subset \mathbb{H}_{n}$ be a measurable set and let $g: \Omega \rightarrow \mathbb{R}$ be a measurable function. The distribution function of $f$, denoted $\mu_{f}: [0, +\infty) \rightarrow [0, +\infty),$ and the nonincreasing rearrangement of $f$, denoted $f^{\ast}: [0, +\infty) \rightarrow [0, +\infty],$ are defined the usual way with 
	\begin{align*}
		\mu_{f}\left(t\right) = \left\lvert \left\lbrace x \in \Omega: \left\lvert f\left(x\right)\right\rvert > t\right\rbrace \right\rvert \quad \text{ and } \quad f^{\ast}\left(r\right)= \inf \left\lbrace t \geq 0: 	\mu_{f}\left(t\right) \leq r\right\rbrace.    
	\end{align*}
	\begin{definition} Let $1\leq s < \infty,$ $\mu \in (0, \infty]$ and $\beta \in \mathbb{R}.$ Given a measurable function $f:\Omega \rightarrow \mathbb{R},$ we define its  $L^{(s, \mu)}\log^{\beta}L \left(\Omega\right)$ quasinorm as 
		\begin{align*}
			\left\vvvert f \right\vvvert_{L^{(s, \mu)}\log^{\beta}L \left(\Omega\right)} := \left\lbrace \begin{aligned}
				&\left( \int_{0}^{\left\lvert \Omega \right\rvert} \left[ \rho^{\frac{1}{s}}\left( 1+ \left\lvert \log \rho \right\rvert\right)^{\beta}f^{\ast}\left(\rho\right)\right]^{\mu}\ \frac{\mathrm{d}\rho}{\rho}\right)^{\frac{1}{\mu}} &&\text{ if } \mu < \infty, \\
				&\sup\limits_{\rho \in \left(0, \left\lvert \Omega \right\rvert\right)}  \rho^{\frac{1}{s}}\left( 1+ \left\lvert \log \rho \right\rvert\right)^{\beta}f^{\ast}\left(\rho\right) &&\text{ if } \mu = \infty. 
			\end{aligned}\right. 
		\end{align*}
		We say $f \in L^{(s, \mu)}\log^{\beta}L \left(\Omega\right)$ if $	\left\vvvert f \right\vvvert_{L^{(s, \mu)}\log^{\beta}L \left(\Omega\right)}  < \infty.$
	\end{definition}
	The quasinorm is defined above is indeed, in general, only a quasinorm and not a norm. However, we have the following. 
	\begin{proposition}
		When $s>1$ and $\mu < \infty,$ the $L^{(s, \mu)}\log^{\beta}L \left(\Omega\right)$ quasinorm is equivalent to a norm
		\begin{align*}
			\left\lVert f \right\rVert_{L^{(s, \mu)}\log^{\beta}L \left(\Omega\right)} := \left( \int_{0}^{\left\lvert \Omega \right\rvert} \left[ \rho^{\frac{1}{s}}\left( 1+ \left\lvert \log \rho \right\rvert\right)^{\beta}f^{\ast\ast}\left(\rho\right)\right]^{\mu}\ \frac{\mathrm{d}\rho}{\rho}\right)^{\frac{1}{\mu}},
		\end{align*}
		where $f^{\ast\ast}$ is essentially the maximal function of $f^{\ast},$ defined by 
		\begin{align*}
			f^{\ast\ast} \left( r\right) = \frac{1}{r} \int_{0}^{r} f^{\ast}\left(t\right)\ \mathrm{d}t \qquad \text{ for } r \in (0, \infty). 
		\end{align*} 
		Moreover, the norm is absolutely continuous, i.e. for any $f \in L^{(s, \mu)}\log^{\beta}L \left(\Omega\right)$ and $E \subset \Omega$ measurable,  we have 
		\begin{align}\label{abs cont Lorentz zygmund norm}
			\left\lVert f \right\rVert_{L^{(s, \mu)}\log^{\beta}L \left(E\right)} \rightarrow 0 \qquad \text{ as } \qquad \left\lvert E \right\rvert \rightarrow 0. 
		\end{align}
	\end{proposition}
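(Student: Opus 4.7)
The plan is to establish the norm–quasinorm equivalence via a weighted Hardy inequality, and then deduce absolute continuity from dominated convergence applied to the defining integral.

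First I would verify one direction for free: since $f^{\ast}(\rho)\le f^{\ast\ast}(\rho)$ for all $\rho>0$, one has immediately
\[
\vvvert f\vvvert_{L^{(s,\mu)}\log^{\beta}L(\Omega)} \le \left\lVert f\right\rVert_{L^{(s,\mu)}\log^{\beta}L(\Omega)}.
\]
For the reverse inequality, the core step is a weighted one-dimensional Hardy inequality of the form
\[
\left(\int_{0}^{|\Omega|}\!\Bigl[\rho^{\tfrac{1}{s}-1}\!\int_{0}^{\rho}\!h(t)\,\mathrm dt\;(1+|\log\rho|)^{\beta}\Bigr]^{\mu}\frac{\mathrm d\rho}{\rho}\right)^{\!1/\mu}\!\!\le C(s,\mu,\beta)\!\left(\int_{0}^{|\Omega|}\!\bigl[\rho^{1/s}h(\rho)(1+|\log\rho|)^{\beta}\bigr]^{\mu}\frac{\mathrm d\rho}{\rho}\right)^{\!1/\mu},
\]
valid for any nonincreasing nonnegative $h$ provided $s>1$ and $\mu\in[1,\infty)$. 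The hypothesis $s>1$ is exactly what makes the weight $\rho^{1/s-1}$ integrable near $0$ and fuels the Muckenhoupt-type condition needed for such Hardy inequalities with logarithmic weights; this is a standard calculation (splitting the inner integral on $(0,\rho/2)$ and $(\rho/2,\rho)$ and exploiting slow variation of $(1+|\log\rho|)^{\beta}$). Applying this with $h=f^{\ast}$ yields $\lVert f\rVert \le C\,\vvvert f\vvvert$ and therefore the equivalence.

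Given the equivalence, the fact that $\lVert\cdot\rVert$ is a genuine norm follows from the subadditivity $(f+g)^{\ast\ast}(\rho)\le f^{\ast\ast}(\rho)+g^{\ast\ast}(\rho)$ (a classical consequence of the rearrangement inequality $\int_{0}^{\rho}(f+g)^{\ast}\le\int_{0}^{\rho}f^{\ast}+\int_{0}^{\rho}g^{\ast}$) combined with the Minkowski inequality in $L^{\mu}((0,|\Omega|),\,\mathrm d\rho/\rho)$ with weight $\rho^{1/s}(1+|\log\rho|)^{\beta}$; positive homogeneity and definiteness are immediate from the definition.

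For the absolute continuity \eqref{abs cont Lorentz zygmund norm}, I would argue as follows. For any measurable $E\subset\Omega$, the rearrangement of $f\chi_{E}$ satisfies $(f\chi_{E})^{\ast}(\rho)\le f^{\ast}(\rho)$ and is supported in $(0,|E|]$, hence $(f\chi_{E})^{\ast\ast}(\rho)\le f^{\ast\ast}(\rho)$ as well. Therefore
\[
\lVert f\rVert_{L^{(s,\mu)}\log^{\beta}L(E)}^{\mu}\le\int_{0}^{|E|}\bigl[\rho^{1/s}(1+|\log\rho|)^{\beta}f^{\ast\ast}(\rho)\bigr]^{\mu}\frac{\mathrm d\rho}{\rho}.
\]
Since $f\in L^{(s,\mu)}\log^{\beta}L(\Omega)$, the integrand on $(0,|\Omega|)$ is in $L^{1}(\mathrm d\rho/\rho)$ by the equivalence already proved, so the conclusion follows from the absolute continuity of the Lebesgue integral as $|E|\to 0$. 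The main technical obstacle is the Hardy inequality with logarithmic weights; once that is in hand, the subadditivity, equivalence of norms, and absolute continuity of the norm all fall out routinely.
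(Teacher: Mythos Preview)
The paper does not supply its own proof of this proposition; it is recorded as a known fact, with the preceding sentence pointing to Chapter~9 of Pick--Kufner--John--Fu\v{c}\'ik as a reference for Lorentz--Zygmund spaces. Your argument is correct and follows the standard route one finds in such references: the pointwise inequality $f^{\ast}\le f^{\ast\ast}$ gives one direction of the equivalence, a weighted Hardy inequality (whose validity hinges precisely on $s>1$, so that the weight $\rho^{1/s-1}$ is integrable near the origin and the Muckenhoupt condition holds even with the slowly varying logarithmic factor) gives the reverse, subadditivity of $f^{\ast\ast}$ combined with Minkowski in $L^{\mu}$ yields the triangle inequality, and absolute continuity follows from the tail estimate $\int_{0}^{|E|}[\rho^{1/s}(1+|\log\rho|)^{\beta}f^{\ast\ast}(\rho)]^{\mu}\,\mathrm d\rho/\rho\to 0$ once the full integral is known to be finite.

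One minor caveat: your Minkowski step tacitly requires $\mu\ge 1$, whereas the proposition as stated allows any $\mu<\infty$. For $0<\mu<1$ the expression with $f^{\ast\ast}$ is still equivalent to the quasinorm (the Hardy inequality goes through), but it is only a quasinorm, not a norm, since $L^{\mu}$ fails the triangle inequality. In the paper's applications only the case $\mu=1$ (namely $L^{(Q,1)}\log L$) is ever used, so this does not affect anything downstream.
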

	Now for any $\beta \in \mathbb{R}$ and any $g \in L^{(Q, 1)}\log^{\beta}L \left(\Omega\right),$ we define 
	\begin{align}\label{def of Lorentz Zygmund sum}
		S_{(x_0, r, \theta, \tau, \beta )}\left( g \right): = \sum\limits_{j=0}^{\infty} r_{j} \log^{\beta} \left(\frac{1}{r_{j}}\right)\left(  \fint_{B_{r_{j}}\left(x_{0}\right)}\left\lvert   g \right\rvert^{\theta} \right)^{\frac{1}{\theta}}, 	
	\end{align}
	where $0 < r < 1/2e ,$ $\tau \in (0,1),$ $r_{j}:= \tau^{j}r$ and $1 < \theta < Q.$ 
	\begin{proposition}
		For any $g \in L^{(Q, 1)}\log^{\beta}L \left(\Omega\right),$ we have 
		\begin{align}\label{estimate on infinite sum Lorentz Zygmund}
			S_{(x_0, r, \theta, \tau, \beta )}\left( g \right) \leq c 	\left\lVert g \right\rVert_{L^{(Q, 1)}\log^{\beta}L \left(B_{2r}\left(x_{0}\right)\right)} . 
		\end{align}
		Consequently, $S_{(x_0, r, \theta, \tau, \beta )}\left( g \right) \rightarrow 0$ as $r \rightarrow 0.$
	\end{proposition}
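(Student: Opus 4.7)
The plan is to prove the quantitative bound \eqref{estimate on infinite sum Lorentz Zygmund} first; the convergence $S_{(x_0,r,\theta,\tau,\beta)}(g)\to 0$ as $r\to 0$ then follows directly from \eqref{abs cont Lorentz zygmund norm} applied to the set $B_{2r}(x_0)$, whose measure tends to zero. For the quantitative bound, I would bound each inner average $(\fint_{B_{r_j}}|g|^{\theta})^{1/\theta}$ by a Lorentz norm on the same ball, then pass to an integral of the nonincreasing rearrangement of $g$, and finally swap the sum with the integral.

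First I would use Hölder's inequality with the inclusions $L^{(Q,1)}(B)\hookrightarrow L^{Q}(B)\hookrightarrow L^{\theta}(B)$, valid on any finite-measure set since $1<\theta<Q$ with scale-invariant constants (cf.\ Remark \ref{p uniform constants Sobolev inequalities}). On a ball $B_{\rho}$ this gives
\[
\Bigl(\fint_{B_{\rho}}|g|^{\theta}\Bigr)^{1/\theta}\le \frac{C}{|B_{\rho}|^{1/Q}}\,\|g\|_{L^{(Q,1)}(B_{\rho})},
\]
and using $|B_{r_{j}}|^{1/Q}=c_n^{1/Q}r_{j}$ one obtains $r_{j}(\fint_{B_{r_{j}}}|g|^{\theta})^{1/\theta}\le C\,\|g\|_{L^{(Q,1)}(B_{r_{j}})}$. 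Then, writing $h:=(g\chi_{B_{2r}(x_{0})})^{*}$ and using $(g\chi_{B_{r_{j}}})^{*}\le h$ pointwise, one represents $\|g\|_{L^{(Q,1)}(B_{r_{j}})}\le \int_{0}^{V_{j}}t^{1/Q-1}h(t)\,dt$, with $V_{j}:=|B_{r_{j}}|$.

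Substituting into $S_{(x_{0},r,\theta,\tau,\beta)}(g)$ and swapping the order of summation and integration via Fubini yields
\[
S_{(x_{0},r,\theta,\tau,\beta)}(g)\le C\int_{0}^{V_{2r}}t^{1/Q-1}h(t)\,W(t)\,dt,\qquad W(t):=\sum_{j:\,V_{j}>t}\log^{\beta}(1/r_{j}).
\]
The set $\{j:V_{j}>t\}$ is a finite initial segment determined by the geometric spacing $r_{j}=\tau^{j}r$, and the weights $\log(1/r_{j})=\log(1/r)+j\log(1/\tau)$ form an arithmetic progression. An Abel-summation/partial-sum manipulation, exploiting the shell structure $V_{k}-V_{k+1}\sim V_{k}$ and the monotonicity of $h$, then lets one replace $W(t)$ in the integral by the Lorentz-Zygmund weight $(1+|\log t|)^{\beta}$, producing
\[
S_{(x_{0},r,\theta,\tau,\beta)}(g)\le C\int_{0}^{V_{2r}}t^{1/Q-1}(1+|\log t|)^{\beta}h(t)\,dt=C\,\|g\|_{L^{(Q,1)}\log^{\beta}L(B_{2r}(x_{0}))}.
\]

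The main obstacle is precisely this last step: a term-by-term comparison $W(t)\le C(1+|\log t|)^{\beta}$ is not available (it overshoots by one factor of $\log(1/t)$), and the correct inequality holds only in the integrated sense against the weight $t^{1/Q-1}h(t)\,dt$. Recovering the precise log-power therefore requires careful bookkeeping of the geometric/dyadic structure of $\{r_{j}\}$ and of the monotonicity of the rearrangement $h$, so that the excess logarithm arising from counting the index set $\{j:V_{j}>t\}$ is exactly absorbed by the shell factor $V_{k}-V_{k+1}$ appearing after the Fubini exchange.
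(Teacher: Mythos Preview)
The paper states this proposition without proof, so there is nothing to compare against directly; judging your proposal on its own, there is a genuine gap that cannot be repaired along the lines you suggest.

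The loss occurs already at your first step $r_j(\fint_{B_{r_j}}|g|^\theta)^{1/\theta}\le C\|g\|_{L^{(Q,1)}(B_{r_j})}$. This inequality is correct but too crude: take $g$ supported in $B_r(x_0)$ with rearrangement $h(s)=s^{-1/Q}(1+|\log s|)^{-\gamma}$ for some $\beta+1<\gamma<\beta+2$. Then $g\in L^{(Q,1)}\log^\beta L$, and one checks directly that $S_{(x_0,r,\theta,\tau,\beta)}(g)\sim\sum_j(1+j)^{\beta-\gamma}<\infty$, yet your intermediate quantity $\sum_j\log^\beta(1/r_j)\|g\|_{L^{(Q,1)}(B_{r_j})}\sim\sum_j(1+j)^{\beta+1-\gamma}=\infty$. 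Hence the integrated inequality you anticipate after Fubini is simply false for this $h$, and no Abel summation or shell bookkeeping downstream can salvage it: the upper bound is already infinite before you reach that stage. A route that does work keeps the $L^\theta$ norm: write $r_j(\fint_{B_{r_j}}|g|^\theta)^{1/\theta}\le c\,V_j^{1/Q-1/\theta}\bigl(\int_0^{V_j}h^\theta\bigr)^{1/\theta}$, split $(0,V_j]=\bigcup_{k\ge j}(V_{k+1},V_k]$, apply the $L^\theta$ triangle inequality and swap sums. Because $1/Q-1/\theta<0$ and $V_j=\tau^{jQ}V_0$, the inner sum $\sum_{j\le k}V_j^{1/Q-1/\theta}(1+|\log V_j|)^\beta$ is essentially geometric and collapses to $c\,V_k^{1/Q-1/\theta}(1+|\log V_k|)^\beta$; then bounding $\bigl(\int_{V_{k+1}}^{V_k}h^\theta\bigr)^{1/\theta}\le c\,h(V_{k+1})V_k^{1/\theta}$ by monotonicity of $h$ and comparing termwise with $\int_{V_{k+2}}^{V_{k+1}}s^{1/Q-1}(1+|\log s|)^\beta h(s)\,ds$ yields the Lorentz--Zygmund norm on $B_{2r}(x_0)$. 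The key is that the geometric factor $V_j^{1/Q-1/\theta}$ absorbs the extra sum over $j$ at no logarithmic cost---exactly what passing to the $L^{(Q,1)}$ norm forfeits.
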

	For functions with derivatives in Lorentz-Zygmund spaces, we record the following result, which can be proved by adapting the Euclidean case proved in \cite[Theorem 1.3]{Cianchi_Pick_embeddingContinuous} (with $Q$ playing the role of $n$ here), by using \cite[Lemma 4.2]{Francu_polyaSzego_Carnot}, combined with \cite[Theorem 1.18]{Garofalo_Nhieu_isoperimatricMinimalsurface}.  
	\begin{proposition}\label{Lorenz-Zygmund-Sobolev_implies_logholder}
		For any $p \in L^{1}\left(\Omega\right)$ with $\X p \in L^{(Q, 1)}\log^{\beta}L \left(\Omega\right),$ we have 
		\begin{align}\label{Sobolev estimate Lorentz Zygmund}
			\lim\limits_{R \rightarrow 0} \omega_p\left(R\right)\log \left(\frac{1}{R}\right) \leq c\left(Q\right) \left[ \left\lVert p\right\rVert_{L^{1}\left( \Omega\right)} + \left\lVert \X p \right\rVert_{L^{\left(Q,1\right)}\log L\left( \Omega; \mathbb{R}^{2n}\right)}\right]. 
		\end{align}
		In particular, any such function $p$ is log-H\"{o}lder continuous, with the log-H\"{o}lder constant $L$ (as defined in \eqref{log holder condition thm}) being bounded by a constant depending only on $Q$ and $\left\lVert p\right\rVert_{L^{1}\left( \Omega\right)} + \left\lVert \X p \right\rVert_{L^{\left(Q,1\right)}\log L\left( \Omega; \mathbb{R}^{2n}\right)}.$ 
	\end{proposition}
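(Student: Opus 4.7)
The strategy follows the roadmap sketched just before the statement: use the Pólya–Szegő principle of \cite{Francu_polyaSzego_Carnot} to transport the problem to Euclidean $\mathbb{R}^{Q}$, where $Q$ is the homogeneous dimension of $\mathbb{H}_{n}$, and then apply the Euclidean Lorentz–Zygmund Sobolev embedding of \cite{Cianchi_Pick_embeddingContinuous}. The sub-Riemannian isoperimetric inequality of \cite{Garofalo_Nhieu_isoperimatricMinimalsurface} enters as the sharp ingredient that makes Pólya–Szegő on Carnot groups match the Euclidean Pólya–Szegő with the correct constants in dimension $Q$.

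Concretely, given $p\in L^{1}(\Omega)$ with $\X p\in L^{(Q,1)}\log^{\beta}L(\Omega)$, first extend $p$ by a standard truncation/cutoff so that the ambient $L^1$ norm is controlled by $\lVert p\rVert_{L^{1}(\Omega)}$ and the horizontal gradient outside a neighborhood of $\Omega$ is controlled by $\lVert \X p\rVert_{L^{(Q,1)}\log^{\beta}L(\Omega)}$. Denote by $p^{\#}:\mathbb{R}^{Q}\to\mathbb{R}$ the symmetric decreasing rearrangement, computed with respect to the Haar measure on $\mathbb{H}_{n}$ (which equals Lebesgue measure on $\mathbb{R}^{2n+1}$) and transplanted to $\mathbb{R}^{Q}$ via the matching of ball measures $\lvert B_{R}^{\mathrm{CC}}\rvert=c_{n}R^{Q}$ with the Lebesgue measure of a Euclidean ball in $\mathbb{R}^{Q}$. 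The Pólya–Szegő inequality in \cite{Francu_polyaSzego_Carnot}, whose proof relies on Federer's coarea formula together with the Garofalo–Nhieu isoperimetric inequality, yields the pointwise control $(|\nabla p^{\#}|)^{\ast}(s)\leq C(Q)(|\X p|)^{\ast}(s)$ for a.e.\ $s>0$, and hence
\begin{equation*}
\lVert \nabla p^{\#}\rVert_{L^{(Q,1)}\log^{\beta}L(\mathbb{R}^{Q})}\;\leq\; C(Q)\,\lVert \X p\rVert_{L^{(Q,1)}\log^{\beta}L(\Omega)},
\end{equation*}
since these norms depend only on the rearrangement. Then Theorem~1.3 of \cite{Cianchi_Pick_embeddingContinuous}, applied to $p^{\#}$ in dimension $Q$, asserts that $p^{\#}$ is continuous and satisfies $\omega_{p^{\#}}(r)\log(1/r)\leq C(Q)\lVert \nabla p^{\#}\rVert_{L^{(Q,1)}\log L(\mathbb{R}^{Q})}$ for all small $r>0$.

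Finally, transfer the modulus of continuity from $p^{\#}$ back to $p$. The standard fact that symmetric decreasing rearrangement does not increase the oscillation on equimeasurable balls gives $\omega_{p}(R)\leq \omega_{p^{\#}}(c_{Q}R)$; combining this with the previous two estimates yields the claimed log-Hölder bound, with the $\lVert p\rVert_{L^{1}(\Omega)}$ term absorbing the normalization of $p^{\#}$. The final assertion about log-Hölder continuity is immediate from the resulting inequality together with the definition in \eqref{log holder def}. The main technical obstacle is the last transfer step: one must make sure that the rearrangement genuinely controls the CC-oscillation of $p$ with the right scaling, which is where the precise choice of the Euclidean target dimension as $Q$ (rather than $2n+1$) becomes essential. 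An alternative, and perhaps cleaner, route avoids $p^{\#}$ altogether by combining the pointwise representation $|p(x)-p(y)|\lesssim \int_{0}^{c\,d_{\mathrm{CC}}(x,y)^{Q}}t^{1/Q-1}(|\X p|)^{\ast}(t)\,\mathrm{d}t$ (which is the kernel lemma underlying Cianchi–Pick, valid once one has the sub-Riemannian isoperimetric inequality) with the definition of the Lorentz–Zygmund norm.
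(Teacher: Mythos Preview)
Your references and overall ingredients match exactly what the paper sketches (the paper gives no detailed proof, only the pointer to Cianchi--Pick, Franc\r{u}, and Garofalo--Nhieu). However, your \emph{main} line of argument contains a genuine gap at the transfer-back step, while the ``alternative route'' you mention at the end is in fact the correct implementation.

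The problematic step is the claim that symmetric decreasing rearrangement does not increase oscillation, i.e.\ that $\omega_{p}(R)\leq \omega_{p^{\#}}(c_{Q}R)$. This is false in general, and no hypothesis on $\X p$ rescues it directly. A clean one-dimensional counterexample: take a triangular wave of period $1/N$ and amplitude $1$ on $[0,1]$. Its distribution function coincides with that of the identity on $[0,1]$, so $p^{\#}(x)=1-2|x|$ and $\omega_{p^{\#}}(r)=2r$, whereas $\omega_{p}(r)=2Nr$. Rearrangement \emph{smooths}; it does not remember local oscillation. Thus knowing $\omega_{p^{\#}}$ tells you nothing about $\omega_{p}$ without further input, and the chain $\omega_{p}\leq \omega_{p^{\#}}\lesssim \lVert\nabla p^{\#}\rVert\lesssim \lVert\X p\rVert$ breaks at the first inequality.

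What actually works, and what the cited references are pointing at, is your alternative: one never rearranges $p$ itself. Instead, one derives the pointwise estimate
\[
|p(x)-p(y)|\;\leq\; c(Q)\int_{0}^{c\,d_{\mathrm{CC}}(x,y)^{Q}} t^{\,1/Q-1}\,(|\X p|)^{\ast}(t)\,\mathrm{d}t,
\]
which is precisely the content of \cite[Lemma~4.2]{Francu_polyaSzego_Carnot} (a coarea/representation lemma valid once the sub-Riemannian isoperimetric inequality of \cite{Garofalo_Nhieu_isoperimatricMinimalsurface} is available). Multiplying by $\log(1/d_{\mathrm{CC}}(x,y))$ and estimating the resulting integral against the $L^{(Q,1)}\log L$ quasinorm of $\X p$ --- this is exactly the computation in the proof of \cite[Theorem~1.3]{Cianchi_Pick_embeddingContinuous} with $n$ replaced by $Q$ --- yields \eqref{Sobolev estimate Lorentz Zygmund}. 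So: drop the $p^{\#}$ detour, and promote your final paragraph to the actual argument.
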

	\subsection{Algebraic lemmas}
	In this subsection, we shall record some elementary algebraic facts. As is standard in the literature, given any real number $1 < p < \infty,$ we shall use the following auxiliary mapping $V_{p}: \mathbb{R}^{2n} \rightarrow \mathbb{R}^{2n}$ 
	defined by 
	\begin{equation}\label{definition V}
		V_{p}(z) : = \left\lvert z \right\rvert^{\frac{p-2}{2}} z , 
	\end{equation}
	which is a locally Lipschitz bijection from $\mathbb{R}^{2n}$ into itself. 
	We summarize the relevant properties of the map in the following. 
	\begin{lemma}\label{prop of V}
		For any $p >1$ there exists a constant $c_{V} \equiv c_{V}(n,p) > 0$ such that 
		\begin{equation}\label{constant cv}
			\frac{\left\lvert z_{1} - z_{2}\right\rvert}{c_{V}} \leq  \frac{\left\lvert V_{p}(z_{1}) - V_{p}(z_{2})\right\rvert}{\left( \left\lvert z_{1} \right\rvert + 
				\left\lvert z_{2}\right\rvert \right)^{\frac{p-2}{2}}} \leq c_{V}\left\lvert z_{1} - z_{2}\right\rvert,
		\end{equation}
		for any $z_{1}, z_{2} \in \mathbb{R}^{2n},$ not both zero. This implies the classical monotonicity estimate 
		\begin{equation}\label{monotonicity}
			\left( \left\lvert z_{1} \right\rvert + \left\lvert z_{2}\right\rvert \right)^{p-2} \left\lvert z_{1} - z_{2}\right\rvert^{2} \leq c(n,p) 
			\left\langle  \left\lvert z_{1} \right\rvert^{p-2} z_{1} - \left\lvert z_{2} \right\rvert^{p-2} z_{2}, z_{1} - z_{2} \right\rangle ,  
		\end{equation}
		with a constant $c(n,p) > 0$ for all $p >1$ and all $z_{1}, z_{2} \in \mathbb{R}^{2n}.$
		Moreover, if $1 < p \leq 2,$ there exists a constant $c \equiv c(n,p) > 0$ such that for any  $z_{1}, z_{2} \in \mathbb{R}^{2n},$
		\begin{equation}\label{v estimate p less 2}
			\left\lvert z_{1} - z_{2}\right\rvert \leq c \left\lvert V(z_{1}) - V(z_{2})\right\rvert^{\frac{2}{p}} + c \left\lvert V(z_{1}) - V(z_{2})\right\rvert 
			\left\lvert z_{2}\right\rvert^{\frac{2-p}{2}}.
		\end{equation}
	\end{lemma}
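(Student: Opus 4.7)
The plan is to reduce all three assertions to a single integral representation along the segment $z_t := tz_1 + (1-t)z_2$, $t \in [0,1]$. A direct computation at $z \neq 0$ gives
\begin{align*}
DV_{p}(z) = |z|^{(p-2)/2}\,\mathrm{I} + \tfrac{p-2}{2}|z|^{(p-6)/2}\, z \otimes z,
\end{align*}
which is symmetric with eigenvalues $|z|^{(p-2)/2}$ (multiplicity $2n-1$) and $\tfrac{p}{2}|z|^{(p-2)/2}$, so its operator norm is pointwise comparable to $|z|^{(p-2)/2}$ with constants depending only on $p$. The analogous Jacobian of $W_{p}(z) := |z|^{p-2}z$, namely $|z|^{p-2}\mathrm{I} + (p-2)|z|^{p-4}z\otimes z$, has eigenvalues $|z|^{p-2}$ and $(p-1)|z|^{p-2}$, hence is bounded below by $\min(1,p-1)|z|^{p-2}\mathrm{I}$ as a symmetric quadratic form whenever $p>1$.

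For the two-sided comparison \eqref{constant cv}, writing $V_{p}(z_1) - V_{p}(z_2) = \int_0^1 DV_{p}(z_t)(z_1 - z_2)\,\mathrm{d}t$ reduces matters to the elementary integral estimate
\begin{align*}
\frac{1}{c}(|z_1|+|z_2|)^{(p-2)/2} \leq \int_0^1 |z_t|^{(p-2)/2}\,\mathrm{d}t \leq c\,(|z_1|+|z_2|)^{(p-2)/2}.
\end{align*}
The upper bound is immediate when $p \geq 2$; when $p<2$ the exponent $(p-2)/2$ exceeds $-1$, so an affine change of variable along the segment handles the potential singularity and still gives the stated bound. For the lower bound one distinguishes the regime $|z_1-z_2| \leq \tfrac{1}{2}\max(|z_1|,|z_2|)$, where $|z_t|$ remains comparable to $\max(|z_1|,|z_2|)$ throughout, from its complement, where $\max(|z_1|,|z_2|) \approx |z_1-z_2|$ and direct integration again yields the correct size. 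The monotonicity \eqref{monotonicity} then follows at once from the analogous expansion $\langle W_p(z_1)-W_p(z_2), z_1-z_2\rangle = \int_0^1 \langle DW_p(z_t)(z_1-z_2), z_1-z_2\rangle \,\mathrm{d}t$, together with the lower quadratic-form bound on $DW_p(z_t)$ and the same integral inequality, now applied with exponent $p-2$ in place of $(p-2)/2$.

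Finally, for \eqref{v estimate p less 2} with $1 < p \leq 2$, the lower bound in \eqref{constant cv} rearranges to $|z_1 - z_2| \leq c_V (|z_1|+|z_2|)^{(2-p)/2}|V_{p}(z_1) - V_{p}(z_2)|$. I would then split into two regimes: if $|z_1| \leq 2|z_2|$, then $(|z_1|+|z_2|)^{(2-p)/2} \leq 3^{(2-p)/2}|z_2|^{(2-p)/2}$ produces the second term; if $|z_1| > 2|z_2|$, then $|z_1 - z_2| \geq |z_1|/2$ and $|z_1|+|z_2| \leq 3|z_1-z_2|$, and absorbing powers of $|z_1-z_2|$ on the left gives $|z_1-z_2|^{p/2} \leq c\,|V_{p}(z_1) - V_{p}(z_2)|$, i.e.\ the first term. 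The only mild obstacle is uniformity of the constants as $p \to 1^+$, where the lower eigenvalue bound $\min(1,p-1)$ degenerates; however, since all applications in this paper restrict $p$ to a compact subinterval $[\gamma_1,\gamma_2]\subset(1,\infty)$ (cf.\ Remark \ref{p uniform constants Sobolev inequalities}), this is harmless.
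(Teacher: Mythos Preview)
Your argument is correct and is in fact the standard one; the paper itself does not supply a proof at all, merely recording after the lemma that \eqref{constant cv} and \eqref{monotonicity} are classical (citing Lemma 2.1 of \cite{hamburgerregularity}) and that \eqref{v estimate p less 2} follows from these (citing Lemma 2 of \cite{KuusiMingione_nonlinearStein}). Your integral-along-the-segment approach is exactly the method used in Hamburger's paper, so you have essentially reconstructed the cited proof.

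One small point worth making explicit in your write-up: the reduction of the \emph{lower} bound in \eqref{constant cv} to the integral inequality $\int_0^1 |z_t|^{(p-2)/2}\,\mathrm{d}t \geq c(|z_1|+|z_2|)^{(p-2)/2}$ uses that $M := \int_0^1 DV_p(z_t)\,\mathrm{d}t$ is symmetric positive definite with $\lambda_{\min}(M) \geq \min(1,p/2)\int_0^1 |z_t|^{(p-2)/2}\,\mathrm{d}t$, whence $|M(z_1-z_2)| \geq \lambda_{\min}(M)|z_1-z_2|$. You clearly have this in mind given your eigenvalue computation, but the sentence ``reduces matters to'' hides this step. Similarly, for the upper bound when $p<2$ your phrase ``an affine change of variable handles the singularity'' yields $\int_0^1 |z_t|^{(p-2)/2}\,\mathrm{d}t \leq c|z_1-z_2|^{(p-2)/2}$, which is only comparable to $(|z_1|+|z_2|)^{(p-2)/2}$ in the regime $|z_1-z_2| \gtrsim \max(|z_1|,|z_2|)$; in the complementary regime one uses instead that $|z_t|$ stays bounded below by $\tfrac12\max(|z_1|,|z_2|)$. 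This is the same dichotomy you already invoke for the lower bound, so no new idea is needed.
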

	The estimates \eqref{constant cv} and \eqref{monotonicity} are classical (cf. Lemma 2.1, \cite{hamburgerregularity}). The estimate \eqref{v estimate p less 2} follows from this 
	(cf. Lemma 2, \cite{KuusiMingione_nonlinearStein}).
	\begin{lemma}
		Let $m \in \mathbb{N}.$ For any $1 < p < \infty,$ there exists a constant $c=c\left(m, p\right) >0$ such that for any $\xi, \eta \in \mathbb{R}^{m},$ we have 
		\begin{align}\label{comparison algebraic ineq}
			\left\lvert \xi \right\rvert^{p} \leq c 	\left\lvert \eta \right\rvert^{p} + c 	\left( 	\left\lvert \xi \right\rvert^{2} + 	\left\lvert \eta \right\rvert^{2} \right)^{\frac{\left(p-2\right)}{2}}\left\lvert \xi - \eta \right\rvert^{2}.
		\end{align}
	\end{lemma}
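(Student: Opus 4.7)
The inequality is elementary and can be handled by a case split that decouples the two regimes where one side dominates. My plan is to dichotomise on the relative sizes of $|\xi|$ and $|\eta|$, and then treat the ranges $p\ge 2$ and $1<p<2$ separately only in the harder case.

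First I would set a threshold and consider \textbf{Case 1:} $|\xi|\le 2|\eta|$. In this regime the conclusion is immediate, because
\begin{equation*}
|\xi|^{p}\le 2^{p}|\eta|^{p},
\end{equation*}
so any constant $c\ge 2^{p}$ (which can be absorbed into $c(m,p)$ since $p\in(1,\infty)$ is fixed) works, and the second term on the right is not needed.

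Next I would handle \textbf{Case 2:} $|\xi|>2|\eta|$. Here the reverse triangle inequality yields $|\xi-\eta|\ge |\xi|-|\eta|>|\xi|/2$, so $|\xi-\eta|^{2}\ge |\xi|^{2}/4$. It remains to produce a lower bound for $\bigl(|\xi|^{2}+|\eta|^{2}\bigr)^{(p-2)/2}$. If $p\ge 2$, then $t\mapsto t^{(p-2)/2}$ is nondecreasing, so $\bigl(|\xi|^{2}+|\eta|^{2}\bigr)^{(p-2)/2}\ge |\xi|^{p-2}$. If $1<p<2$, then the function is nonincreasing, but in Case 2 we also have $|\xi|^{2}+|\eta|^{2}\le |\xi|^{2}+|\xi|^{2}/4\le 2|\xi|^{2}$, hence $\bigl(|\xi|^{2}+|\eta|^{2}\bigr)^{(p-2)/2}\ge 2^{(p-2)/2}|\xi|^{p-2}$. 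In either subcase we obtain
\begin{equation*}
\bigl(|\xi|^{2}+|\eta|^{2}\bigr)^{(p-2)/2}|\xi-\eta|^{2}\ge c(p)\,|\xi|^{p-2}\cdot|\xi|^{2}=c(p)\,|\xi|^{p},
\end{equation*}
which again gives the desired estimate (now with the second term carrying the whole mass).

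Combining both cases and choosing $c=c(m,p)$ to be the maximum of the two constants produced above completes the proof. There is no genuine obstacle here; the only subtlety worth flagging is that for $1<p<2$ the factor $(|\xi|^{2}+|\eta|^{2})^{(p-2)/2}$ blows up as $(\xi,\eta)\to 0$, so one should tacitly interpret the right-hand side as $+\infty$ (or as any upper bound by continuous extension) when $\xi=\eta=0$, in which case $|\xi-\eta|^{2}=0$ and the inequality is trivial via Case~1.
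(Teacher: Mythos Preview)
Your proof is correct. The paper states this lemma without proof (it is listed among the elementary algebraic facts in the preliminaries), so there is nothing to compare against; your case split on $|\xi|\le 2|\eta|$ versus $|\xi|>2|\eta|$, together with the monotonicity/antimonotonicity of $t\mapsto t^{(p-2)/2}$ in the two exponent ranges, is a clean and standard way to establish the inequality.
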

	\begin{lemma}
		Let $1 < \gamma_{1} \leq \gamma_{2} < \infty$ be any two real numbers and let $\beta$ satisfy 
		\begin{align*}
			\frac{\gamma_{2}}{\gamma_{2}-1} \leq \beta \leq \frac{\gamma_{1}}{\gamma_{1} -1}. 
		\end{align*}
		Then the following holds. 
		\begin{enumerate}[(i)]
			\item For any $0 < \sigma < 1,$ we have the inequality 
			\begin{align}\label{LpintoLlogL}
				\left( e + t \right)\log^{\beta} \left( e + t\right) \leq c\left(\gamma_{1}, \gamma_{2} \right) \sigma^{-\beta}\left( e + t \right)^{1 + \sigma/4} \qquad \text{ for all } t \geq 0. 
			\end{align}
			\item We have the following inequality 
			\begin{align}\label{less than e}
				t^{\frac{\beta}{\beta -1}}\left\lvert \log \ t\right\rvert^{\beta}  \leq c \left(\gamma_{1}, \gamma_{2}\right) \qquad \text{ for all } 0< t \leq e. 
			\end{align}
			\item For any two nonnegative real numbers $a, b \geq 0,$ we have 
			\begin{align}\label{concavity of log}
				\log^{\beta}\left( e + ab \right) \leq 2^{\left(\frac{\gamma_{1}}{\gamma_{1}-1} -1\right)}\left[ 	\log^{\beta}\left( e + a \right) + 	\log^{\beta}\left( e + b \right)\right]. 
			\end{align}
		\end{enumerate}
	\end{lemma}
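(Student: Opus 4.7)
The three inequalities are elementary, with the only subtle point being that the constants must be uniform in $\beta$ over the range $[\gamma_{2}', \gamma_{1}']$. My plan is to handle each part directly; the bounds on $\beta$ serve only to convert exponents involving $\beta$ into constants depending on $\gamma_{1}, \gamma_{2}$ alone (in particular, $\beta - 1 \geq \gamma_{2}' - 1 > 0$, so $\beta/(\beta-1) \leq \gamma_{2}$ and $\beta \leq \gamma_{1}/(\gamma_{1}-1)$).

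For part (i), the main tool is the elementary inequality $\log s \leq s^{\epsilon}/\epsilon$, valid for $s \geq 1$ and $\epsilon > 0$. I would prove this by noting that $f(s) := s^{\epsilon} - \epsilon \log s$ satisfies $f(1) = 1 > 0$ and $f'(s) = \epsilon(s^{\epsilon-1} - s^{-1}) \geq 0$ on $[1, \infty)$. Applying this with $s = e + t$ and $\epsilon = \sigma/(4\beta)$ gives
\[
\log^{\beta}(e + t) \leq \left(\frac{4\beta}{\sigma}\right)^{\beta}(e + t)^{\sigma/4},
\]
and multiplying by $(e + t)$ yields the stated bound with constant $(4\beta)^{\beta}\sigma^{-\beta}$, where $(4\beta)^{\beta}$ is controlled by $\gamma_{1}, \gamma_{2}$ via the upper bound $\beta \leq \gamma_{1}/(\gamma_{1}-1)$.

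For part (ii), I split the range $0 < t \leq e$ at $t = 1$. On $[1, e]$ both factors are bounded by constants depending only on $\gamma_{1}, \gamma_{2}$, since $t^{\beta/(\beta-1)} \leq e^{\gamma_{2}}$ and $|\log t|^{\beta} \leq 1$. On $(0, 1]$, substitute $s = -\log t \geq 0$ to reduce the claim to the uniform boundedness of the map $s \mapsto s^{\beta} e^{-s\beta/(\beta-1)}$ on $[0, \infty)$; a direct calculus computation gives the maximum $(\beta - 1)^{\beta} e^{-\beta}$, attained at $s = \beta - 1$, which is controlled in terms of $\gamma_{1}, \gamma_{2}$.

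For part (iii), observe the elementary inequality $e + ab \leq (e + a)(e + b) = e^{2} + (a + b)e + ab$, which follows from $e^{2} \geq e$ and $a, b \geq 0$. Taking logarithms gives $\log(e + ab) \leq \log(e + a) + \log(e + b)$, and the convexity inequality $(x + y)^{\beta} \leq 2^{\beta - 1}(x^{\beta} + y^{\beta})$ for $\beta \geq 1$ then produces the stated inequality with constant $2^{\beta - 1} \leq 2^{\gamma_{1}/(\gamma_{1}-1) - 1}$, using the upper bound on $\beta$. No step here presents any genuine obstacle; the lemma is essentially bookkeeping to ensure that no hidden $\beta$-dependence is absorbed into the constants, which is important because these estimates will later be applied uniformly in $x$ with $\beta = p(x)'$ taking different values across the domain.
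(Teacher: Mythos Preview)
Your proof is correct and follows essentially the same approach as the paper, which simply records the single elementary inequality $t\log^{\beta}t \leq [\beta/(e(p-1))]^{\beta}t^{p}$ for $t\geq 1$, $p>1$ (equivalent to your $\log s \leq s^{\epsilon}/\epsilon$) as the source of both (i) and (ii), and attributes (iii) to the concavity of the logarithm. Your version fills in the details more explicitly, including the uniform control of constants in $\beta$, but the underlying argument is the same.
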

	\begin{proof}
		$(i)$ and $(ii)$ follow from the elementary inequality valid for every $\beta >0,$  
		\begin{align}\label{growth of log}
			t \log^{\beta}t \leq \left[ \frac{\beta}{e\left( p -1\right)}\right]^{\beta}t^{p} \qquad \text{ for all } t \geq 1, 
		\end{align}
		valid for any $p>1.$ $(iii)$ follows from the concavity of the logarithm. 
	\end{proof}
	\subsection{Subelliptic \texorpdfstring{$p$}{p}-harmonic functions}
	We record here some known results about subelliptic $p$-Laplacian. 
	\begin{proposition}\label{existence of minimizers for constant exponent}
		Let $\Omega \subset \mathbb{H}^{n}$ be a smooth, bounded domain. Let $a:\Omega \rightarrow [\gamma, L]$ be measurable, where $ 0 < \gamma < L < \infty.$ Let $1 < p < \infty,$ $F\in L^{p'}\left( \Omega; \mathbb{R}^{2n}\right)$ and $u_{0} \in HW^{1,p}\left( \Omega\right).$ Then the following minimization problem 
		\begin{align}
			m:= \inf \left\lbrace \int_{\Omega} \left[ a\left( x\right)\left\lvert \X u \right\rvert^{p} - \left\langle F, \X u \right\rangle\right]: u \in u_{0} + HW^{1,p}_{0}\left( \Omega\right)\right\rbrace 
		\end{align}
		admits a unique minimizer. Moreover, the minimizer $\bar{u} \in HW^{1,p}\left( \Omega\right)$ is the unique weak solution to the system 
		\begin{align*}
			\left\lbrace \begin{aligned}
				\operatorname{div}_{\mathbb{H}}\left( a\left(x\right)\left\lvert \X u\right\rvert^{p-2}\X u\right) &= \operatorname{div}_{\mathbb{H}}F &&\text{ in } \Omega, \\
				u &=u_{0} &&\text{ on } \origpartial\Omega. 
			\end{aligned}\right. 
		\end{align*} 
	\end{proposition}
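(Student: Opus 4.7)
The proof is by the direct method of the calculus of variations, followed by a convexity/monotonicity argument for the Euler--Lagrange equation and uniqueness. Set
\begin{align*}
\mathcal{J}[u] := \int_\Omega \bigl[a(x)|\X u|^p - \langle F, \X u\rangle\bigr]\,\mathrm{d}x.
\end{align*}
First I would verify coercivity on the affine space $u_0 + HW^{1,p}_0(\Omega)$. Writing $u = u_0 + v$ with $v \in HW^{1,p}_0$ and using $a \geq \gamma > 0$ together with Young's inequality on the linear term, one gets $\mathcal{J}[u] \geq (\gamma/2)\int_\Omega |\X u|^p - C\|F\|^{p'}_{L^{p'}}$. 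The Poincar\'e inequality \eqref{poincareineqwithmeans} applied to $v$ then yields $\mathcal{J}[u] \geq c_1\|v\|_{HW^{1,p}}^{p} - c_2$. Hence any minimizing sequence $\{u_k = u_0 + v_k\}$ produces a bounded sequence $\{v_k\}$ in the reflexive Banach space $HW^{1,p}_0(\Omega)$; I would extract a weakly convergent subsequence $v_k \rightharpoonup \bar v$ and set $\bar u := u_0 + \bar v$.

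Next I would establish weak lower semicontinuity of $\mathcal{J}$. The integrand $z \mapsto a(x)|z|^p$ is convex and nonnegative in $z$, so the classical Tonelli--Ioffe semicontinuity theorem applies directly to $v \mapsto \int a|\X v|^p$ in the weak $HW^{1,p}$ topology, while the linear term $v \mapsto \int \langle F, \X v\rangle$ is weakly continuous because $F \in L^{p'}$. Therefore $\mathcal{J}[\bar u] \leq \liminf_k \mathcal{J}[u_k] = m$, so $\bar u$ is a minimizer. For the Euler--Lagrange equation I would compute the first variation: for any $\varphi \in HW^{1,p}_0(\Omega)$, differentiating $\varepsilon \mapsto \mathcal{J}[\bar u + \varepsilon \varphi]$ at $\varepsilon = 0$ (justified by dominated convergence with integrable envelope controlled by $|\X \bar u|^{p} + |\X \varphi|^{p} + |F|\,|\X\varphi|$) yields
\begin{align*}
\int_\Omega p\,a(x)|\X \bar u|^{p-2}\X \bar u \cdot \X\varphi\,\mathrm{d}x = \int_\Omega \langle F, \X\varphi\rangle\,\mathrm{d}x,
\end{align*}
which is precisely the weak formulation of the stated divergence equation, the harmless constant $p$ being absorbable into $a$.

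Finally, uniqueness of the weak solution would follow from the strict monotonicity of the $p$-Laplace operator. If $u_1, u_2$ are two weak solutions with the common trace $u_0$, I would use $u_1 - u_2 \in HW^{1,p}_0(\Omega)$ as a test function in the difference of the two equations to obtain
\begin{align*}
\int_\Omega a(x)\bigl\langle |\X u_1|^{p-2}\X u_1 - |\X u_2|^{p-2}\X u_2,\; \X u_1 - \X u_2\bigr\rangle\,\mathrm{d}x = 0.
\end{align*}
The monotonicity estimate \eqref{monotonicity}, combined with $a \geq \gamma > 0$, forces the integrand to vanish almost everywhere, so $\X u_1 = \X u_2$ a.e.; combining this with $u_1 - u_2 \in HW^{1,p}_0$ and the Poincar\'e inequality \eqref{poincareineqwithmeans} gives $u_1 = u_2$. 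Since minimizers and weak solutions coincide (any convex Gateaux-differentiable functional sits above its tangent hyperplane at a critical point), the minimizer is also unique. There is no substantive obstacle in the Heisenberg setting here: all the prerequisites---reflexivity of $HW^{1,p}$, the Poincar\'e inequality, density of $C^\infty_c$ in $HW^{1,p}_0$, and the algebraic monotonicity inequality \eqref{monotonicity}---have already been recorded in Section \ref{prelim}.
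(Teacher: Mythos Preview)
Your proof is correct and is precisely the ``standard variational methods'' the paper invokes without further detail; the direct method with coercivity via Young plus Poincar\'e, lower semicontinuity by convexity, first variation for the Euler--Lagrange equation, and uniqueness via the monotonicity estimate \eqref{monotonicity} is exactly what is intended. Two cosmetic remarks: the Poincar\'e inequalities \eqref{poincareineqwithmeans} and \eqref{poincaresobolevineq} in the paper are stated only on metric balls, so strictly speaking you need their analogues on the smooth bounded domain $\Omega$ (which are equally standard), and the factor of $p$ you flag in the Euler--Lagrange computation is indeed a harmless normalization mismatch in the proposition's statement rather than a flaw in your argument.
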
 
	This can be proved using standard variational methods. For constant coefficient case homogeneous case, i.e. when $a\left(x\right) \equiv a$ for some constant $a>0$ and $F \equiv 0,$ we have the following fundamental estimate due to \cite{zhong2018regularityvariationalproblemsheisenberg} and \cite{Mukherjee_Zhong}. 
	\begin{theorem}\label{Uhlenbeck estimate}
		Let $\Omega \subset \mathbb{H}^{n}$ be open. Let $a>0$ and $1<p < \infty.$ Let $u \in HW^{1,p}_{\text{loc}}\left( \Omega\right)$ be a weak solution to 
		\begin{align*}
			\operatorname{div}_{\mathbb{H}}\left( a \left\lvert \X u\right\rvert^{p-2}\X u\right) &=0 &&\text{ in } \Omega.
		\end{align*}
		Then $\X u$ is locally H\"{o}lder continuous in $\Omega$ and for any ball $B_{R} \subset \Omega,$ we have the estimates 
		\begin{align}\label{sup estimate constant homogeneous}
			\sup\limits_{B_{R/2}} \left\lvert \X u \right\rvert &\leq c_{1}  \fint_{B_{R}} \left\lvert \X u \right\rvert \\ \intertext{ and }
			\label{oscillation estimate constant homogeneous}
			\sup\limits_{x, y \in B_{\rho}} \left\lvert \X u\left(x\right) - \X u\left(y\right) \right\rvert &\leq c_{2}\left(\frac{\rho}{R}\right)^{\beta} \left( \fint_{B_{R}} \left\lvert \X u \right\rvert^{p}\right)^{\frac{1}{p}},  
		\end{align}
		for any $0 < \rho < R/2,$ for some constant $c_{1}, c_{2}>0$ and $0 < \beta < 1$ depending only on $n$ and $p$.
	\end{theorem}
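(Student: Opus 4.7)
The plan is to follow the standard Zhong--Mukherjee--Zhong strategy: regularize, derive Caccioppoli estimates for the horizontal second derivatives while carefully controlling the vertical derivative $Tu$ that arises from commutators, run Moser iteration for the Lipschitz bound, and then a De Giorgi-type oscillation argument for the Hölder estimate on $\X u$. By homogeneity we may normalize $a\equiv 1$. First I would introduce the regularization $\operatorname{div}_{\mathbb H}\bigl((\delta+|\X u_\delta|^2)^{(p-2)/2}\X u_\delta\bigr)=0$, which for each $\delta>0$ is uniformly elliptic away from zero and whose solutions $u_\delta$ are smooth enough (e.g.\ $C^\infty$ in the horizontal variables with $X_iu_\delta\in HW^{1,2}_{\rm loc}$, plus $Tu_\delta\in L^2_{\rm loc}$) to justify differentiating the equation. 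All estimates will be proved uniformly in $\delta$ and then passed to the limit using the standard weak convergence of $V_p(\X u_\delta)$ controlled by \eqref{constant cv}.

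Next I would differentiate the regularized equation by $X_s$ and test against $\eta^2 X_s u_\delta$. Because $[X_i,X_{n+i}]=T$, this produces, besides the good Caccioppoli term $\int\eta^2\deltaX^{(p-2)/2}|\X(X_s u_\delta)|^2$, remainder terms involving the vertical derivative $Tu_\delta$. The decisive step is then to derive an auxiliary Caccioppoli estimate for $Tu_\delta$: testing the equation against expressions of the form $\eta^2 T\bigl(\deltaX^{\gamma/2}\bigr)$ or $\eta^2(Tu_\delta)\deltaX^{\gamma/2}$ and absorbing via \eqref{monotonicity} yields, for a suitable exponent range, control of $\int \eta^2\deltaX^{(p-2+\gamma)/2}|Tu_\delta|^2$ by the horizontal Caccioppoli terms. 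Iterating between the ``horizontal'' and ``vertical'' Caccioppoli inequalities (Mukherjee--Zhong's key iteration, which works for all $1<p<\infty$) closes the loop and gives uniform control of both $|\X^2 u_\delta|$ and $|Tu_\delta|$ in suitable weighted $L^2$ norms.

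Having closed the Caccioppoli cycle, I would run a Moser iteration on $G:=(\delta+|\X u_\delta|^2)^{1/2}$, using the subelliptic Sobolev inequality (Proposition \ref{poincaresobolev}) together with the weighted Caccioppoli-type inequalities just derived. This produces $\sup_{B_{R/2}} G \le C \bigl(\fint_{B_R} G^q\bigr)^{1/q}$ for some $q>1$, which by a standard interpolation/covering trick (exploiting $0<1\leq G$ in a scale-invariant way on balls where the equation is scaled to unit size) upgrades to the claimed $L^\infty$--$L^1$ bound \eqref{sup estimate constant homogeneous} after letting $\delta\to 0$. For the oscillation estimate \eqref{oscillation estimate constant homogeneous}, I would derive a De Giorgi-type energy decay for the super-level sets $\{X_s u_\delta>k\}\cap B_\rho$, using the already proved Caccioppoli estimates: this gives oscillation decay $\operatorname{osc}_{B_{R/2^k}}X_s u_\delta\le \theta^k\operatorname{osc}_{B_R}X_s u_\delta$ for some $\theta\in(0,1)$ depending only on $n,p$, which is equivalent to \eqref{oscillation estimate constant homogeneous} with $\beta=-\log_2\theta$.

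The hard part, as in \cite{zhong2018regularityvariationalproblemsheisenberg} and \cite{Mukherjee_Zhong}, is closing the Caccioppoli cycle between $\X^2 u$ and $Tu$ uniformly in $\delta$ and for the full range $1<p<\infty$: the degenerate/singular weight $\deltaX^{(p-2)/2}$ forbids a naive Cauchy--Schwarz on the commutator terms, and one has to choose the auxiliary exponents $\gamma$ in the vertical Caccioppoli carefully so that the absorption into the horizontal Caccioppoli term succeeds. Once that technical engine is in place, the Moser iteration and De Giorgi oscillation decay are essentially standard and yield the two stated estimates with constants $c_1,c_2,\beta$ depending only on $n$ and $p$.
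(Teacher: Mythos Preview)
The paper does not give its own proof of this theorem: it is stated in the preliminaries as a known result, with the sentence ``we have the following fundamental estimate due to \cite{zhong2018regularityvariationalproblemsheisenberg} and \cite{Mukherjee_Zhong}'' immediately preceding the statement, and no proof is supplied. Your outline is precisely a sketch of the Zhong and Mukherjee--Zhong arguments that the paper is citing, so in that sense you are aligned with what the paper invokes; there is nothing to compare against beyond the citation itself.
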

	The estimate \eqref{oscillation estimate constant homogeneous} implies a `excess-to-energy decay estimate'. However, for our purposes, the following fundamental `excess-to-excess decay estimate', proved in \cite{Mallick_Sil_ExcessDecay}, is crucial. 
	\begin{theorem}\label{MallickSil estimate}
		Let $\Omega \subset \mathbb{H}^{n}$ be open. Let $a>0$ and $1<p < \infty.$ Let $u \in HW^{1,p}_{\text{loc}}\left( \Omega\right)$ be a weak solution to 
		\begin{align*}
			\operatorname{div}_{\mathbb{H}}\left( a \left\lvert \X u\right\rvert^{p-2}\X u\right) &=0 &&\text{ in } \Omega.
		\end{align*}
		Then for any $1 \leq q \leq 2, $ we have the estimates 
		\begin{align}\label{excess decay}
			\left( \fint_{B_{\rho}(x_{0})} \left\lvert \X u  - \left( \X u\right)_{B_{\rho}(x_{0})}\right\rvert^{q}\right)^{\frac{1}{q}}&\leq C_{q}\left(\frac{\rho}{R}\right)^{\beta} \left( \fint_{B_{R}(x_{0})} \left\lvert \X u - \left( \X u\right)_{B_{R}(x_{0})}\right\rvert^{q}\right)^{\frac{1}{q}},  
		\end{align}
		for any $x_{0} \in \Omega$ and any $0 < \rho < R$ such that $B_{R}(x_{0}) \subset \Omega,$  where $0 < \beta < 1$ depends only on $n$ and $p$ and for some constant $C_{q}= C_{q}\left( n, p , q\right)>0$.
	\end{theorem}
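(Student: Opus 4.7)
The plan is to prove Theorem~\ref{MallickSil estimate} via a non-degenerate/degenerate dichotomy, adapting the excess decay argument for $p$-harmonic systems to the Heisenberg setting. Write
\[ E_q(B_r):=\Bigl(\fint_{B_r}\lvert\X u-(\X u)_{B_r}\rvert^{q}\Bigr)^{1/q}, \qquad \xi:=(\X u)_{B_R}. \]
From the Uhlenbeck-type estimates \eqref{sup estimate constant homogeneous} and \eqref{oscillation estimate constant homogeneous} one immediately obtains the crude version $\operatorname{osc}_{B_\rho}\X u\le c(\rho/R)^\beta\bigl(\fint_{B_R}\lvert\X u\rvert^{p}\bigr)^{1/p}$, which controls $E_q(B_\rho)$ by the full $L^p$-norm rather than by the excess at scale $R$. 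The task in \eqref{excess decay} is precisely to upgrade the right-hand side from $\lVert\X u\rVert_{L^p(B_R)}$ to $E_q(B_R)$.

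To do this I would fix a small parameter $\kappa\in(0,1)$ and split into two regimes. In the \emph{degenerate} case $\lvert\xi\rvert\le\kappa\lVert\X u\rVert_{L^\infty(B_{R/2})}$, the sup estimate yields $\lVert\X u\rVert_{L^\infty(B_{R/2})}\le c\fint_{B_R}\lvert\X u\rvert\le c\lvert\xi\rvert+c\,E_1(B_R)$; combined with the case hypothesis, this forces $\lVert\X u\rVert_{L^\infty(B_{R/2})}\le c\,E_1(B_R)\le c\,E_q(B_R)$ after choosing $\kappa$ small enough, so the crude bound already delivers \eqref{excess decay}. In the \emph{non-degenerate} case $\lvert\xi\rvert\ge\kappa\lVert\X u\rVert_{L^\infty(B_{R/2})}$ I would linearize: with $A:=D_\eta\bigl(\lvert\eta\rvert^{p-2}\eta\bigr)\bigr|_{\eta=\xi}$, a constant positive-definite matrix with ellipticity of order $\lvert\xi\rvert^{p-2}$, the equation reads $\operatorname{div}_{\mathbb{H}}(A\X u)=\operatorname{div}_{\mathbb{H}}\mathcal{E}$ with an error $\lvert\mathcal{E}\rvert\lesssim(\lvert\xi\rvert+\lvert\X u\rvert)^{p-2}\lvert\X u-\xi\rvert^{2}$ coming from \eqref{constant cv}. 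Comparing $u$ on $B_{R/2}$ with the solution $h$ of the frozen linear system $\operatorname{div}_{\mathbb{H}}(A\X h)=0$ sharing the same boundary data, constant-coefficient Campanato excess decay for subelliptic linear systems on $\mathbb{H}_n$ yields $E_q^{h}(B_\rho)\lesssim(\rho/R)^{\beta}E_q^{h}(B_R)$, and a Caccioppoli/monotonicity comparison based on \eqref{monotonicity} and the auxiliary map $V_p$ transfers this decay back to $\X u$; the smallness $\lvert\X u-\xi\rvert\ll\lvert\xi\rvert$ provided by non-degeneracy is exactly what allows the perturbation to be absorbed. For $1<p<2$ the estimate \eqref{v estimate p less 2} is additionally needed to pass from the $V_p$-excess back to the $\X u$-excess.

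The main obstacle is this linearization step. In $\mathbb{R}^{n}$ one can subtract a horizontal linear function $\xi\cdot x$ from $u$ to reduce to a $p$-harmonic function with small gradient, but on $\mathbb{H}_{n}$ the subelliptic $p$-Laplacian is not invariant under adding horizontal linear functions, because the horizontal fields fail to commute and the commutators $[X_i,X_{n+j}]=\delta_{ij}T$ generate genuinely non-horizontal terms under integration by parts. One must therefore handle the frozen linear system directly and control the error $\mathcal{E}$ via Caccioppoli estimates that also see $T$-derivatives, for which the $Tu$-estimates developed in \cite{zhong2018regularityvariationalproblemsheisenberg,Mukherjee_Zhong} are indispensable. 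Treating $p\ge 2$ and $1<p\le 2$ uniformly, joining the two regimes into a single decay inequality, and checking that the constants depend only on $n,p,q$ (in particular remain finite at $p=2$) are the remaining technical points, but they are routine once the linearization step is secured.
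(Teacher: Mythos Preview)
The paper does not prove Theorem~\ref{MallickSil estimate}; it is quoted from the companion work \cite{Mallick_Sil_ExcessDecay}, and Remark~\ref{p uniform constant decay estimates} only tells us that the argument there is a quantifiable case-by-case analysis phrased as a ``contradiction argument.'' So there is no in-paper proof to compare against line by line, but your sketch can be assessed on its own.

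Your degenerate branch is fine. The gap is in the non-degenerate branch: the hypothesis $\lvert\xi\rvert\ge\kappa\,\lVert\X u\rVert_{L^\infty(B_{R/2})}$ does \emph{not} give the pointwise smallness $\lvert\X u-\xi\rvert\ll\lvert\xi\rvert$ that you invoke when you write ``the smallness \ldots provided by non-degeneracy is exactly what allows the perturbation to be absorbed.'' It only says that the mean and the sup are comparable. For instance, $\X u$ could take the values $0$ and $2\xi$ on sets of equal measure; then $\lvert\xi\rvert=\tfrac12\lVert\X u\rVert_{L^\infty}$ (non-degenerate with $\kappa=\tfrac12$), yet $\lvert\X u-\xi\rvert=\lvert\xi\rvert$ everywhere and the Taylor remainder $\mathcal{E}$ is of the same order as the principal term, so the comparison with the linear solution $h$ yields nothing. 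The dichotomy that actually closes is \emph{excess versus mean}: declare the regime non-degenerate when $E_q(B_R)\le\varepsilon_0\lvert(\X u)_{B_R}\rvert$, so that \eqref{sup estimate constant homogeneous} and \eqref{oscillation estimate constant homogeneous} on a slightly smaller ball genuinely force $\operatorname{osc}\X u\le\tfrac12\lvert\xi\rvert$ and the linearization error is absorbable; declare it degenerate when $E_q(B_R)>\varepsilon_0\lvert(\X u)_{B_R}\rvert$, so that $\lvert\xi\rvert\lesssim E_q(B_R)$ and your degenerate argument runs unchanged. A minor side remark: the second-order remainder after subtracting $A(\X u-\xi)$ scales like $(\lvert\xi\rvert+\lvert\X u\rvert)^{p-3}\lvert\X u-\xi\rvert^{2}$ (with the usual modifications when $p\le 3$), not with exponent $p-2$; this is exactly what makes it small once $\lvert\X u-\xi\rvert\ll\lvert\xi\rvert$, and it does not follow from \eqref{constant cv} alone.
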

	\begin{remark}\label{p uniform constant decay estimates}
		Note that while the constants in the estimates \eqref{sup estimate constant homogeneous}, \eqref{oscillation estimate constant homogeneous} and \eqref{excess decay} depend on $p$, but tracking the constants in the proofs reveal that the constants are explicitly quantifiable. Thus, if $1 < \gamma_{1} \leq p \leq \gamma_{2}< \infty,$ the constants can all be chosen (by replacing them with larger or smaller constants if necessary) to depend only $\gamma_{1}, \gamma_{2}$ such that the estimates are uniform in $p \in [\gamma_{1}, \gamma_{2}].$ See also Remark \ref{p uniform constants Sobolev inequalities}. Observe that though the proof of Theorem \ref{MallickSil estimate} in \cite{Mallick_Sil_ExcessDecay} uses a contradiction argument, it does not use a `contradiction-compactness argument' and thus the `contradiction argument' is, in effect, merely a case by case analysis, where the constants are quantifiable for each case. 
	\end{remark}	
	\subsection{Subelliptic \texorpdfstring{$p\left(x\right)$}{p(x)}-Laplacian}
	Consider the quasilinear equation 
	\begin{align}\label{main equation}
		\operatorname{div}_{\mathbb{H}} \left( a\left(x\right)\left\lvert \X u\right\rvert^{p\left(x\right)-2}\X u\right) &= \operatorname{div}_{\mathbb{H}} F &&\text{ in } \Omega,  
	\end{align}
	where $F \in L^{\gamma_1'}\left(\Omega; \mathbb{R}^{2n}\right).$ We begin by defining the notion of weak solutions. 
	\begin{definition} $u \in HW^{1,1}\left( \Omega \right)$ is called a \textbf{weak solution} of \eqref{main equation} if $u \in HW^{1,p\left(\cdot\right)}\left( \Omega \right)$ and satisfies 
		\begin{align}\label{weak formulation in WTdpx}
			\int_{\Omega} \left\langle a\left(x\right)\left\lvert \X u \right\rvert^{p\left(x\right)-2} \Xu, \X \phi \right\rangle = \int_{\Omega}\left\langle F, \X \phi \right\rangle \qquad \text{ for every } \phi \in HW_{0}^{1,p\left(\cdot\right)}\left(\Omega\right).
		\end{align}
	\end{definition}                                                                               
	\begin{proposition}\label{existence of minimizers for variable exponent}
		Let $\Omega \subset \mathbb{H}^{n}$ be open and bounded. Let $a:\Omega \rightarrow [\nu, L]$ be measurable, where $ 0 < \nu < L < \infty.$ Let $1 < p < \infty,$ $F\in L^{\gamma_1'}\left( \Omega; \mathbb{R}^{2n}\right)$ and $u_{0} \in HW^{1,p(\cdot)}\left( \Omega\right).$ Then the following minimization problem 
		\begin{align}
			m:= \inf \left\lbrace \int_{\Omega} \left[ a\left( x\right)\left\lvert \X u \right\rvert^{p(x)} - \left\langle F, \X u \right\rangle\right]\ \mathrm{d}x: u \in u_{0} + HW^{1,p(\cdot)}_{0}\left( \Omega\right)\right\rbrace 
		\end{align}
		admits a unique minimizer. Moreover, the minimizer $\bar{u} \in HW^{1,p(\cdot)}\left( \Omega\right)$ is the unique weak solution to the system 
		\begin{align*}
			\left\lbrace \begin{aligned}
				\operatorname{div}_{\mathbb{H}}\left( a\left(x\right)\left\lvert \X u\right\rvert^{p(x)-2}\X u\right) &= \operatorname{div}_{\mathbb{H}}F &&\text{ in } \Omega, \\
				u &=u_{0} &&\text{ on } \origpartial\Omega. 
			\end{aligned}\right. 
		\end{align*} 
	\end{proposition}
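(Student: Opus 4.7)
The plan is to apply the direct method of the calculus of variations on the closed convex affine set $u_{0} + HW^{1,p(\cdot)}_{0}(\Omega)$, exploiting reflexivity of the variable-exponent horizontal Sobolev space (granted by the log-H\"older framework of Section~\ref{variable exponent spaces}) together with strict convexity of the integrand. Write
\begin{align*}
E[u] := \int_{\Omega} a(x)|\X u|^{p(x)}\, \mathrm{d}x - \int_{\Omega} \langle F, \X u\rangle\, \mathrm{d}x.
\end{align*}

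First I would establish coercivity. Using $a(x) \geq \nu$ and the pointwise Young inequality with exponents $p(x)$ and $p'(x)$,
\begin{align*}
\bigl|\langle F, \X u\rangle\bigr| \leq \frac{\nu}{2}|\X u|^{p(x)} + c(\nu)|F|^{p'(x)} \leq \frac{\nu}{2}|\X u|^{p(x)} + c(\nu)\bigl(1 + |F|^{\gamma_{1}'}\bigr),
\end{align*}
where the last step uses $p'(x) \leq \gamma_{1}'$. This yields $E[u] \geq \frac{\nu}{2}\int_{\Omega} |\X u|^{p(x)}\, \mathrm{d}x - C$ with $C = C(\nu, \gamma_{1}, \|F\|_{L^{\gamma_{1}'}(\Omega)}, |\Omega|)$, so any minimizing sequence $\{u_{k}\}$ has uniformly bounded modulars $\int_{\Omega} |\X u_{k}|^{p(x)}\, \mathrm{d}x$ and hence uniformly bounded $\|\X u_{k}\|_{L^{p(\cdot)}(\Omega)}$. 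Combining with the Poincar\'e inequality on $HW^{1,p(\cdot)}_{0}(\Omega)$ (deduced from Proposition~\ref{poincarewithmeans} applied on a ball containing $\Omega$ after extension of $u_{k} - u_{0}$ by zero, together with the modular-Luxemburg equivalence), I obtain a uniform $HW^{1,p(\cdot)}(\Omega)$-bound. By reflexivity, a subsequence converges weakly to some $\bar u \in u_{0} + HW^{1,p(\cdot)}_{0}(\Omega)$.

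Next, weak lower semicontinuity of $u \mapsto \int_{\Omega} a(x)|\X u|^{p(x)}\, \mathrm{d}x$ follows from convexity of $z \mapsto a(x)|z|^{p(x)}$ via Mazur's lemma applied to $\X u_{k} \rightharpoonup \X \bar u$ in $L^{p(\cdot)}(\Omega;\mathbb{R}^{2n})$, while the linear term is weakly continuous by the duality $L^{p(\cdot)}(\Omega)^{*} \cong L^{p'(\cdot)}(\Omega)$ together with the embedding $L^{\gamma_{1}'}(\Omega) \hookrightarrow L^{p'(\cdot)}(\Omega)$ valid on the bounded set $\Omega$. Therefore $E[\bar u] \leq \liminf_{k} E[u_{k}] = m$, and $\bar u$ is a minimizer; uniqueness is immediate from strict convexity of $z \mapsto |z|^{p(x)}$ for $p(x) > 1$.

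Finally, the equivalence with the weak formulation \eqref{weak formulation in WTdpx} is the standard first-variation argument. For $\phi \in HW^{1,p(\cdot)}_{0}(\Omega)$, differentiating $t \mapsto E[\bar u + t\phi]$ at $t = 0$ produces \eqref{weak formulation in WTdpx}, with the interchange of derivative and integral justified by dominated convergence using the majorant $c(1 + |\X\bar u| + |\X\phi|)^{p(x)-1}|\X\phi|$, which is integrable by H\"older's inequality in $L^{p(\cdot)}$. Conversely, convexity forces every weak solution to be a minimizer, yielding uniqueness of the weak solution as a corollary. The only mildly technical point is the Young-inequality transition between the modular $\int |\X u|^{p(x)}\,\mathrm{d}x$ and the Luxemburg norm $\|\X u\|_{L^{p(\cdot)}}$ during the coercivity step, which is entirely routine under the log-H\"older hypothesis on $p$.
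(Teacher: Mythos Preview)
Your proposal is correct and is precisely a spelled-out version of what the paper means by ``This can again be proved using standard variational methods,'' which is the entirety of the paper's proof. One small refinement: the Poincar\'e inequality in $HW^{1,p(\cdot)}_{0}(\Omega)$ does not follow directly from the constant-exponent Proposition~\ref{poincarewithmeans} plus modular--Luxemburg manipulations as you sketch (that route only yields an $L^{\gamma_{1}}$ bound), but it is a standard result in the variable-exponent theory of \cite{Diening_et_al_variable_exponent} under the log-H\"older hypothesis, so simply cite that instead.
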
 
	This can again be proved using standard variational methods.
	\section{Higher integrability}\label{higher integrability section}
	We begin by proving higher integrability estimates. 
	\begin{theorem}[Higher integrability]\label{higher integrability}
		Let $q>1$ and $F \in L^{\frac{q\gamma_{1}}{\gamma_{1}-1}} \left(\Omega; \mathbb{R}^{2n}\right)$. Let $u \in HW_{\text{loc}}^{1,p\left(\cdot\right)} \left(\Omega\right)$ be a local weak solution to the system 
		\begin{align}\label{p(x) Laplace homogeneous diveregnce form}
			\operatorname{div}_{\mathbb{H}}( a(x) \lvert \X u \rvert^{p\left(\cdot\right)-2} \Xu) )  = \operatorname{div}_{\mathbb{H}}F   &&\text{ in } \Omega,
		\end{align} where $a: \Omega \rightarrow [\nu, \Lambda]$ is a measurable function and $p:\Omega \rightarrow [\gamma_{1}, \gamma_{2}]$ satisfies \eqref{log holder condition thm} i.e. log-H\"older continuous. Let $\tilde{\Omega} \subset \subset \Omega$ be an open subset and set 
		\begin{align}\label{energy bound}
			K_{0}:= \int_{\tilde{\Omega}} \left\lvert \X u \right\rvert^{p\left(x\right)}\ \mathrm{d}x +1.
		\end{align}
		Then there exist constants $c \equiv c \left(Q, \gamma_1, \gamma_2, \nu, \Lambda, L \right)>0,$ an exponent $\sigma_{0} \equiv \sigma_{0}\left(Q, \gamma_1, \gamma_2, \nu, \Lambda, L, q \right) \in (0, 1)$ with $1+\sigma_0<q$ and a positive radius $R_{0} \equiv  R_{0}\left( Q, \gamma_1, \gamma_2, \nu, \Lambda, L, K_{0}\right) <1/K_0$ such that for any ball $B_{R}\subset \subset \tilde{\Omega}$ with $0 < R \leq R_{0},$ any $\xi \in \mathbb{R}^{2n}$ and $0\leq \sigma \leq \sigma_{0},$ we have 
		\begin{multline}\label{higher integrability estimate}
			\left( \fint_{B_{R/2}}\left\lvert \X u \right\rvert^{p\left(x\right)\left(1 + \sigma\right)}\ \mathrm{d}x\right)^{\frac{1}{1+\sigma}}
			\\ \leq c \left( \fint_{B_{R}}\left\lvert \X u \right\rvert^{p\left(x\right)}\ \mathrm{d}x + 1\right)  + c \left( 1+  \fint_{B_{R}}\left\lvert F - \xi \right\rvert^{\frac{\gamma_{1}\left( 1+\sigma\right)}{\gamma_{1}-1}}\ \mathrm{d}x \right)^{\frac{1}{1+\sigma}}. 
		\end{multline} 
	\end{theorem}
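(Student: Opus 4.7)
The plan is a Gehring self-improvement scheme adapted to variable exponents on the Heisenberg group, in the spirit of the Acerbi--Mingione argument for the Euclidean $p(\cdot)$-Laplacian. First, I would derive a Caccioppoli estimate with shifted source. For concentric balls $B_r \subset B_{2r} \subset B_R \subset \tilde\Omega$, I test the weak formulation \eqref{weak formulation in WTdpx} with $\phi = \eta^{M}(u - (u)_{B_{2r}})$, where $\eta \in C_c^{\infty}(B_{2r})$ is a standard cutoff with $\eta \equiv 1$ on $B_r$ and $|\X\eta|\leq c/r$, and $M > \gamma_2$. Since $\operatorname{div}_{\mathbb{H}}\xi = 0$ for any constant $\xi \in \mathbb{R}^{2n}$, the source on the right-hand side may be replaced by $F - \xi$. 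Combining $\nu \leq a \leq \Lambda$ with pointwise Young's inequality at the variable exponent $p(x)$ yields
\begin{align*}
\fint_{B_r} |\Xu|^{p(x)} \, dx \leq c\fint_{B_{2r}} \left(\frac{|u-(u)_{B_{2r}}|}{r}\right)^{p(x)} dx + c\fint_{B_{2r}} |F-\xi|^{p'(x)}\, dx + c.
\end{align*}

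Second, I would freeze the exponent to extract a subcritical reverse Hölder inequality. Let $p_1 := \inf_{B_{2r}} p$ and $p_2 := \sup_{B_{2r}} p$, so that $p_2 - p_1 \leq \omega_p(4r)$, and choose $\theta := \max\{1, Qp_1/(Q+p_1)\} < p_1$. The log-Hölder condition \eqref{log holder condition thm} gives $r^{p_1 - p_2} \leq c(L)$ once $R_0$ is small. The crucial manipulation is to split $|w|^{p(x)} \leq c(|w|^{p_1} + |w|^{p_2})$ and then to control $|w|^{p_2 - p_1}$ on the set $\{|w| \geq 1\}$ by combining this log-Hölder identity with the energy bound $K_0$; this is exactly where the constraint $R_0 < 1/K_0$ enters, in order to ensure that quantities such as $K_0^{\omega_p(r)}$ are uniformly bounded for all $r \leq R_0$. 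Applying Proposition \ref{poincaresobolevwithmeans} at the frozen exponent $p_1$ (valid with $p$-uniform constants by Remark \ref{p uniform constants Sobolev inequalities}) and running a symmetric exponent-freezing on the $|\Xu|$ side to convert $|\Xu|^{\theta}$ into $|\Xu|^{\theta p(x)/p_1}$, I arrive at
\begin{align*}
\fint_{B_r} |\Xu|^{p(x)}\, dx \leq c\Bigl(\fint_{B_{2r}} (|\Xu|+1)^{\theta p(x)/p_1}\, dx\Bigr)^{p_1/\theta} + c\fint_{B_{2r}} (|F-\xi|^{p'(x)}+1)\, dx,
\end{align*}
a subcritical reverse Hölder inequality with gap $p_1/\theta > 1$ and constants depending only on $\texttt{data}$.

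Third, since $(\mathbb{H}_n, d_{\mathrm{CC}}, dx)$ is a doubling metric measure space, the Giaquinta--Modica version of Gehring's lemma applies directly to the inequality above, yielding some $\sigma_0 \in (0, q-1)$ depending only on $Q, \gamma_1, \gamma_2, \nu, \Lambda, L, q$ such that \eqref{higher integrability estimate} holds for every $0 \leq \sigma \leq \sigma_0$; the bound $p'(x) \leq \gamma_1/(\gamma_1 - 1)$ then simplifies the source term to the form displayed in the theorem, up to an additive constant. The main obstacle I expect is Step~2: the exponent-freezing must produce only a bounded multiplicative error and constants depending purely on $\texttt{data}$, which forces a delicate balance between the log-Hölder defect $r^{\omega_p(r)}$ and the $p(\cdot)$-energy $K_0$, and this balance is precisely what dictates the radius threshold $R_0 < 1/K_0$ in the statement.
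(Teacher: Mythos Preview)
Your proposal is correct and follows essentially the same route as the paper: Caccioppoli via cutoff testing, exponent-freezing through log-H\"older continuity and the bound $K_0^{\omega_p(r)} \leq c$ (which is exactly why $R_0 < 1/K_0$), Poincar\'e--Sobolev at a subcritical frozen exponent, and Gehring's lemma on the doubling space $(\mathbb{H}_n, d_{\mathrm{CC}}, dx)$. The only notable difference is that the paper applies Young's inequality in the Caccioppoli step at the \emph{fixed} exponent $p_2=\sup_{B_R}p$ (obtaining $\lvert u-(u)_R\rvert^{p_2}$ directly) and freezes the gradient side at the fixed subcritical power $p(x)/\gamma_1$, so that the Gehring gap $s=\gamma_1$ is uniform across all balls; your ball-dependent choice $\theta = Qp_1/(Q+p_1)$ would need to be replaced by such a fixed ratio for the Gehring step to apply verbatim.
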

	\begin{proof}
		In the proof, $C$ or $c$ will denote a generic positive constant, depending only on $Q, \gamma_1, \gamma_2, \nu, \Lambda, L,$ whose value can change from line to line, even within a single display. Note that since $\Lambda$ and $L$ are both finite constants, we can assume $\Lambda=L,$ since otherwise we can replace both of them with $\max \left\lbrace L, \Lambda\right\rbrace.$ Also, we may assume $\gamma_1\leq\frac{2Q}{2Q-1}$.  We set $s:=\gamma_1$. Now fix $x_{0} \in \tilde{\Omega}$ and choose a radius $0<R_{0}<1/K_0$ small enough such that $B_{4R_{0}}\left(x_{0}\right) \subset \subset \tilde{\Omega}$ and we have 
		\begin{align}\label{choice of radius}
			\left\lbrace \begin{aligned}
				&\omega_p \left(8R_{0}\right) \leq \frac{s}{2Q}, \\
				&0 < \omega_p\left( R\right)\log \left(\frac{1}{R}\right) \leq L \qquad \text{ for all } 0 < R \leq 8R_{0}. 
			\end{aligned}\right. 
		\end{align}
		Note that this implies 
		\begin{align}\label{bound on p1 by p2}
			\frac{p_{1}}{p_{2}} \geq 1-\frac{s}{2Q},
		\end{align}
		where $p_{2}:= p^{+}_{B_{R}\left(x_{0}\right) }$ and $p_{1}:= p^{-}_{B_{R}\left(x_{0}\right) }.$ We set
		\begin{align}\label{p*1/s}
			\left(\frac{p_1}{s}\right)^*:= \begin{cases}
				\frac{Qp_1}{Qs-p_1} \mbox{ if } p_1<Qs, \\ 
				p_2+1 \mbox{ else .}
			\end{cases}
		\end{align}
		Then clearly, $\left(\frac{p_1}{s}\right)^*\geq p_2$. 
		From now on, all balls would be centered at $x_{0}$ and we omit writing the center of the balls. 
		Now we choose a cut-off function $\eta \in C_{c}^{\infty}\left(B_{R}\right)$ with 
		\begin{align*}
			0 \leq \eta \leq 1 \text{ in  } B_{R}, \qquad \eta \equiv 1 \text{ in } B_{R/2} \qquad \text{ and } \left\lvert \X \eta \right\rvert \leq \frac{C}{R}.   
		\end{align*}
		Note that $\phi = \eta^{p_{2}} \left( u - \left(u\right)_{R}\right)  \in HW_{0}^{1, p\left(\cdot\right)}\left( B_{R}\right)$ and thus we can plug $\phi$ as the test function in the weak formulation of  \eqref{p(x) Laplace homogeneous diveregnce form}. This yields 
		\begin{align*}
			\int_{B_{R}} \left\langle  a\left(x\right) \left\lvert \X u \right\rvert^{\left(p\left(x\right)-2\right)} \X u, \X\left( \eta^{p_{2}}  \left( u - \left(u\right)_{R}\right)  \right) \right\rangle = \int_{B_{R}} \left\langle  F, \X \left( \eta^{p_{2}}  \left( u - \left(u\right)_{R}\right)  \right) \right\rangle. 
		\end{align*}
		Integrating by parts, we have 
		\begin{align*}
			\int_{B_{R}} &\left\langle  a\left(x\right) \left\lvert \X u \right\rvert^{p\left(x\right)-2} \X u, \X\left( \eta^{p_{2}} \left( u - \left(u\right)_{R}\right)  \right) \right\rangle \\&= \int_{B_{R}}\left\langle F, \X\left(\eta^{p_{2}}  \left( u - \left(u\right)_{R}\right) \right) \right\rangle  =  \int_{B_{R}}\left\langle F - \xi, \X\left(\eta^{p_{2}}  \left( u - \left(u\right)_{R}\right) \right) \right\rangle, 
		\end{align*}
		for any $\xi \in \mathbb{R}^{2n}.$	Now note that  
		\begin{align*}
			\X \left( \eta^{p_{2}}  \left( u - \left(u\right)_{R}\right)  \right) = p_{2}\eta^{p_{2}-1} \left( u - \left(u\right)_{R}\right) \X \eta  + \eta^{p_{2}}\X u.
		\end{align*}
		Plugging this, we obtain 
		\begin{align}
			\gamma &\int_{B_{R}}\eta^{p_{2}}\left\lvert \X u \right\rvert^{p\left(x\right)} \notag \\&\leq \int_{B_{R}} \eta^{p_{2}}\left\langle  a\left(x\right) \left\lvert \X u \right\rvert^{p\left(x\right)-2} \X u, \X u \right\rangle \notag \\&= \begin{multlined}[t] \int_{B_{R}} p_{2}\eta^{p_{2}-1} \left( u - \left(u\right)_{R}\right)\left\langle F - \xi, \X \eta \right\rangle + \int_{B_{R}}\eta^{p_{2}}\left\langle F - \xi, \Xu \right\rangle \notag \\- \int_{B_{R}}p_{2}\eta^{p_{2}-1} \left( u - \left(u\right)_{R}\right)\left\langle a\left(x\right) \left\lvert \X u \right\rvert^{p\left(x\right)-2} \X u,  \X \eta \right\rangle \end{multlined} \notag \\
			&:= I_{1} + I_{2} +I_{3}.  \label{p(x) energy}
		\end{align}
		Now, using Young's inequality  and the fact $p_2' \leq \left(p(x)\right)' \leq \gamma_1'$, for all $x\in B_R$, we have 
		\begin{align}
			\left\lvert I_{1} \right\rvert &\leq c\int_{B_{R}} \eta^{\frac{p\left(x\right)\left(p_{2}-1\right)}{p\left(x\right)-1}} \left\lvert F - \xi \right\rvert^{\frac{p\left(x\right)}{p\left(x\right)-1}} + c \int_{B_{R}}\left\lvert \X  \eta \right\rvert^{p_{2}}\left\lvert  u - \left(u\right)_{R}\right\rvert^{p_{2}} + c \left\lvert B_{R} \right\rvert \notag \\
			&\leq c\int_{B_{R}} \left\lvert F - \xi \right\rvert^{\frac{p\left(x\right)}{p\left(x\right)-1}} + \frac{c}{R^{p_{2}}} \int_{B_{R}}\left\lvert  u - \left(u\right)_{R} \right\rvert^{p_{2}} + c \left\lvert B_{R} \right\rvert, \label{estimate of I1}
		\end{align}
		Again using Young's inequality with $\varepsilon>0,$ we have 
		\begin{align}
			\left\lvert I_{2} \right\rvert &\leq \varepsilon \int_{B_{R}}\eta^{p_{2}}\left\lvert \X u \right\rvert^{p\left(x\right)} + C_{\varepsilon} \int_{B_{R}} \eta^{p_{2}} \left\lvert F - \xi \right\rvert^{\frac{p\left(x\right)}{p\left(x\right)-1}}\notag \\
			&\leq \varepsilon \int_{B_{R}}\eta^{p_{2}}\left\lvert \X u \right\rvert^{p\left(x\right)} + C_{\varepsilon} \int_{B_{R}} \left\lvert F - \xi \right\rvert^{\frac{p\left(x\right)}{p\left(x\right)-1}} \label{estimate of I2} \\ \intertext{ and } 
			\left\lvert I_{3} \right\rvert 		 &\leq L\gamma_{2}\int_{B_{R}}\eta^{p_{2}-1}\left\lvert \X u \right\rvert^{p\left(x\right)-1}\left\lvert  \X \eta \right\rvert \left\lvert   u - \left(u\right)_{R} \right\rvert \notag	\\&\leq \varepsilon \int_{B_{R}}\eta^{\frac{p\left(x\right)\left(p_{2}-1\right)}{p\left(x\right)-1}}\left\lvert \X u \right\rvert^{p\left(x\right)} + C_{\varepsilon}\int_{B_{R}}\left\lvert \X  \eta \right\rvert^{p_{2}}\left\lvert   u - \left(u\right)_{R} \right\rvert^{p_{2}}  + c \left\lvert B_{R} \right\rvert \notag \\
			&\leq \varepsilon \int_{B_{R}}\eta^{p_{2}}\left\lvert \X u  \right\rvert^{p\left(x\right)} + \frac{C_{\varepsilon}}{R^{p_{2}}}\int_{B_{R}}\left\lvert   u - \left(u\right)_{R} \right\rvert^{p_{2}}  + c \left\lvert B_{R} \right\rvert, \label{estimate of I3}
		\end{align}
		where in the last line, we have used the fact that $\frac{p\left(x\right)\left(p_{2}-1\right)}{p\left(x\right)-1} > p_{2}$ and $0 \leq \eta \leq 1,$ implying $$ \eta^{\frac{p\left(x\right)\left(p_{2}-1\right)}{p\left(x\right)-1}} \leq \eta^{p_{2}}.$$
		Combining the estimates \eqref{estimate of I1}, \eqref{estimate of I2}, \eqref{estimate of I3} with \eqref{p(x) energy}, choosing $\varepsilon>0$ small enough and then dividing by $\left\lvert B_{R}\right\rvert $  we deduce 
		\begin{align}\label{before poincare sobolev}
			\fint_{B_{R}}\eta^{p_{2}}\left\lvert \X u \right\rvert^{p\left(x\right)}  &\leq c\fint_{B_{R}} \left\lvert F - \xi \right\rvert^{\frac{p\left(x\right)}{p\left(x\right)-1}} + \frac{c}{R^{p_{2}}} \fint_{B_{R}}\left\lvert   u - \left(u\right)_{R} \right\rvert^{p_{2}} + c.
		\end{align}
		Now note that by the fact $s=\gamma_1$, $p_1/s\geq 1$ and by \eqref{p*1/s}, we have , $\left(\frac{p_1}{s}\right)^*\geq p_2$. So using H\"older inequality and the Poincar\'e-Sobolev inequality \eqref{poincaresobolevineqwithmeans} we derive
		\begin{align*}
			\frac{1}{R} \left(\fint_{B_{R}}\left\lvert   u - \left(u\right)_{R} \right\rvert^{p_{2}}\right)^\frac{1}{p_2} &\leq \frac{1}{R} \left(\fint_{B_{R}}\left\lvert   u - \left(u\right)_{R} \right\rvert^{\left(\frac{p_1}{s}\right)^*}\right)^{\frac{1}{\left(\frac{p_1}{s}\right)^*}} \\
			&\leq \left( \fint_{B_R} \left \lvert \X u \right \rvert ^{\frac{p_1}{s}} \right)^{\frac{s}{p_1}}.
		\end{align*}
		
		Combining this estimate with \eqref{before poincare sobolev} we deduce 
		\begin{align*}
			\fint_{B_{R}}\eta^{p_{2}}\left\lvert \X u \right\rvert^{p\left(x\right)} \leq  \fint_{B_{R}} \left\lvert F - \xi \right\rvert^{\frac{p\left(x\right)}{p\left(x\right)-1}} + c \left( 	\fint_{B_{R}} \left\lvert \X u\right\rvert^{\frac{p_{1}}{s}}\right)^{\frac{sp_{2}}{p_{1}}} + c.
		\end{align*}
		Since $\eta \equiv 1$ on $B_{R/2},$ this implies 
		\begin{align}\label{prelim rev holder}
			\fint_{B_{R/2}}\left\lvert \X u \right\rvert^{p\left(x\right)} \leq c \left( 	\fint_{B_{R}} \left\lvert \X u\right\rvert^{\frac{p_{1}}{s}}\right)^{\frac{sp_{2}}{p_{1}}} + c \left( 1 +  \fint_{B_{R}} \left\lvert F - \xi \right\rvert^{\frac{p\left(x\right)}{p\left(x\right)-1}} \right). 
		\end{align}
		Since $ p\left(x\right) \geq p_{1}$ and consequently $p^{'}\left(x\right) \leq p_{1}^{'} \leq \gamma_{1}^{'},$ and we have   
		\begin{align}
			\fint_{B_{R}} \left\lvert \X u\right\rvert^{\frac{p_{1}}{s}} \leq  c\fint_{B_{R}} \left\lvert \X u\right\rvert^{\frac{p\left(x\right)}{s}} +  c \label{estimate of du by p}
		\end{align}
		and 
		\begin{align}
			\fint_{B_{R}} \left\lvert F - \xi \right\rvert^{\frac{p\left(x\right)}{p\left(x\right)-1}} \leq c \fint_{B_{R}} \left\lvert F - \xi \right\rvert^{\frac{\gamma_{1}}{\gamma_{1}-1}} +  c .\label{estimate of F by p_{1}}
		\end{align}
		Using \eqref{estimate of du by p} and as $sp_{2}/p_{1} > 1,$ we have   
		\begin{align}\label{estimate of p1bys}
			\left( 	\fint_{B_{R}} \left\lvert \X u\right\rvert^{\frac{p_{1}}{s}}\right)^{\frac{sp_{2}}{p_{1}}} &\leq \left( \fint_{B_{R}} \left\lvert \X u\right\rvert^{\frac{p\left(x\right)}{s}} +  1\right)^{\frac{sp_{2}}{p_{1}}} \notag\\&\leq c \left[ \left( \fint_{B_{R}} \left\lvert \X u\right\rvert^{\frac{p\left(x\right)}{s}} \right)^{\frac{sp_{2}}{p_{1}}} + 1\right] \notag\\&=c\left( \fint_{B_{R}} \left\lvert \X u\right\rvert^{\frac{p\left(x\right)}{s}} \right)^{s}\left( \fint_{B_{R}} \left\lvert \X u\right\rvert^{\frac{p\left(x\right)}{s}} \right)^{\frac{s\left( p_{2}-p_{1}\right)}{p_{1}}} + c. 
		\end{align}
		Now we estimate the term 
		$$I := \left( 	\fint_{B_{R}} \left\lvert \X u\right\rvert^{\frac{p\left(x\right)}{s}}\right)^{\frac{s\left( p_{2}-p_{1}\right)}{p_{1}}}. $$
		Since $s>1,$ by H\"{o}lder inequality, we deduce 
		\begin{align*}
			I &\leq \left[ \left(\fint_{B_{R}} \left\lvert \X u\right\rvert^{p\left(x\right)} \right)^{\frac{1}{s}}\left( \fint_{B_{R}} 1 \right)^{\frac{s-1}{s}}\right]^{\frac{s\left( p_{2}-p_{1}\right)}{p_{1}}} 
			\\&= \left(\fint_{B_{R}} \left\lvert \X u\right\rvert^{p\left(x\right)} \right)^{\frac{ p_{2}-p_{1}}{p_{1}}} 
			\\&= cR^{-\frac{Q\left( p_{2}-p_{1}\right)}{p_{1}}}\left(\int_{B_{R}} \left\lvert \X u\right\rvert^{p\left(x\right)} \right)^{\frac{ p_{2}-p_{1}}{p_{1}}} 
			\\&\leq cR^{-\frac{Q\left( p_{2}-p_{1}\right)}{p_{1}}}\left(\int_{B_{R}} \left\lvert \X u\right\rvert^{p\left(x\right)} +1 \right)^{\frac{ p_{2}-p_{1}}{p_{1}}} 
			\\&\leq cR^{-Q\gamma_2^{-1}\omega_p\left(R\right)}\left(\int_{B_{R}} \left\lvert \X u\right\rvert^{p\left(x\right)} +1 \right)^{\frac{\omega_p\left(R\right)}{p_{1}}} \\
			&\leq c \left(\frac{K_0}{R^{Q\gamma_2^{-1}}}\right)^{\omega_p(R)} \leq c \left(\frac{1}{R^{1+Q\gamma_2^{-1}}}\right)^{\omega_p(R)} \stackrel{\eqref{log holder condition thm}}{\leq}  ce^{L(1+Q\gamma_2^{-1})}, 
		\end{align*}
		where in the last line we have used the fact $K_0<R_0^{-1}< R^{-1}.$
		Combining this with \eqref{prelim rev holder}, \eqref{estimate of p1bys} and \eqref{estimate of F by p_{1}}, we arrive at 
		\begin{align*}
			\fint_{B_{R/2}}\left\lvert \X u \right\rvert^{p\left(x\right)} \leq c\left( \fint_{B_{R}} \left\lvert \X u\right\rvert^{\frac{p\left(x\right)}{s}} \right)^{s}  + c \left( 1 +  \fint_{B_{R}} \left\lvert F - \xi \right\rvert^{\frac{\gamma_{1}}{\gamma_{1}-1}} \right). 
		\end{align*} Since $s>1,$ this immediately implies the following reverse H\"{o}lder inequality with increasing support 
		\begin{align*}
			\left( \fint_{B_{R/2}} \left\lvert \X u \right\rvert^{p\left(x\right)} \right)^{\frac{1}{s}}\leq c \fint_{B_{R}} \left\lvert \X u\right\rvert^{\frac{p\left(x\right)}{s}}  + c \left( 1 +  \fint_{B_{R}} \left\lvert F - \xi \right\rvert^{\frac{\gamma_{1}}{\gamma_{1}-1}} \right)^{\frac{1}{s}}. 
		\end{align*}
		Now a standard Gehring lemma type argument (c.f. Theorem 3.3 \cite{GheringZatorska-Goldstein} ) implies the existence of a constant $\sigma_{0}$ satisfying $1+\sigma_0<q$, such that for any $0 < \sigma \leq \sigma_{0},$ we have 
		\begin{multline*}
			\left( \fint_{B_{R/2}}\left\lvert \X u \right\rvert^{p\left(x\right)\left(1 + \sigma\right)}\ \mathrm{d}x\right)^{\frac{1}{1+\sigma}}
			\\ \leq c \fint_{B_{R}}\left\lvert \X u \right\rvert^{p\left(x\right)}\ \mathrm{d}x  + c \left( 1+  \fint_{B_{R}}\left\lvert F - \xi \right\rvert^{\frac{\gamma_{1}\left( 1+\sigma\right)}{\gamma_{1}-1}}\ \mathrm{d}x \right)^{\frac{1}{1+\sigma}}.
		\end{multline*} 
		This completes the proof. 
	\end{proof}
	When $F \equiv 0$ and $a\equiv 1,$ the same proof coupled with a standard interpolation argument implies the following (see also \cite[Theorem 3.1]{Ok_pxStein}, \cite[Theorem 2.4]{Baroni_pxStein}). 
	\begin{theorem}\label{higher integrability 2}
		Let $w \in HW_{\text{loc}}^{1,p\left(\cdot\right)} \left(\Omega\right)$ be a local weak solution of 
		\begin{align}\label{p(x) harmonic}
			\operatorname{div}_{\mathbb{H}}( \lvert \X w \rvert^{p\left(\cdot\right)-2} \X w) )  = 0   &&\text{ in } \Omega,
		\end{align} where $p:\Omega \rightarrow [\gamma_{1}, \gamma_{2}]$ satisfies \eqref{log holder condition thm}. Let $\tilde{\Omega} \subset \subset \Omega$ be an open subset and set 
		\begin{align}\label{energy bound 2}
			K_{0}:= \int_{\tilde{\Omega}} \left\lvert \X w \right\rvert^{p\left(x\right)}\ \mathrm{d}x +1.
		\end{align}
		Then there exist constants $c_{h} \equiv c_{h} \left(Q, \gamma_1, \gamma_2, L \right)>0,$ and an exponent $\sigma_{0} \equiv \sigma_{0}\left(Q, \gamma_1, \gamma_2, L \right) \in (0,1)$ and a radius $0<R_{0} = R_{0}\left( Q, \gamma_1, \gamma_2, L, K_{0}\right) <1/K_0$ such that for any ball $B_{R}\subset \subset \tilde{\Omega}$ with $0 < R \leq R_{0}$, any $0 < \sigma \leq \sigma_{0}$  and any point $\bar{x} \in B_{R},$ we have 
		\begin{multline}\label{higher integrability estimate RHS 0}
			\max \left\lbrace \left( \fint_{B_{R/2}}\left\lvert \X w \right\rvert^{p\left(x\right)\left(1 + \sigma\right)}\ \mathrm{d}x\right)^{\frac{1}{1+\sigma}}, 	\left( \fint_{B_{R/2}}\left\lvert \X w \right\rvert^{p\left(\bar{x}\right)\left(1 + \sigma\right)}\ \mathrm{d}x\right)^{\frac{1}{p\left(\bar{x}\right)\left(1 + \sigma\right)}} \right\rbrace 
			\\ \leq c_{h} \left[  1 + \left( \fint_{B_{R}}\left\lvert \X w \right\rvert^{\gamma}\ \mathrm{d}x \right)^{\frac{1}{\gamma}} \right]  . 
		\end{multline} 
	\end{theorem}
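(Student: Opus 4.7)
The strategy is to re-run the proof of Theorem \ref{higher integrability} in the specialized setting $a\equiv 1$, $F\equiv 0$, but with the Poincar\'e--Sobolev step (which there was applied at exponent $p_1/s$ with $s=\gamma_1$) replaced by a Poincar\'e--Sobolev inequality at the smaller exponent $\gamma=\min\{\gamma_1,\gamma_2',2Q'\}$. The choice of $\gamma$ is engineered precisely so that $\gamma\leq Q$ and the Sobolev conjugate $\gamma^*:=Q\gamma/(Q-\gamma)$ satisfies $\gamma^*\geq\gamma_2\geq p_2$; this is an elementary consequence of $\gamma\leq\gamma_2'$ (handling the case $\gamma_2\leq 2Q'$) together with $\gamma\leq 2Q'$ (covering the remaining case).

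Concretely, the same Caccioppoli derivation with test function $\phi=\eta^{p_2}(w-(w)_R)$, after discarding the $F$-terms, yields
\[
\fint_{B_R}\eta^{p_2}|\X w|^{p(x)}\,\mathrm{d}x\leq \frac{c}{R^{p_2}}\fint_{B_R}|w-(w)_R|^{p_2}\,\mathrm{d}x+c.
\]
H\"older's inequality followed by Proposition \ref{poincaresobolevwithmeans} applied at exponent $\gamma$ then gives
\[
\frac{1}{R}\left(\fint_{B_R}|w-(w)_R|^{p_2}\right)^{1/p_2}\leq \frac{1}{R}\left(\fint_{B_R}|w-(w)_R|^{\gamma^*}\right)^{1/\gamma^*}\leq c\left(\fint_{B_R}|\X w|^\gamma\right)^{1/\gamma}.
\]
Plugging this back into the Caccioppoli bound and handling the $p_2-p_1\leq\omega_p(R)$ gap via the log-H\"older condition \eqref{log holder condition thm} together with the smallness $R\leq R_0<1/K_0$ --- exactly as at the end of the proof of Theorem \ref{higher integrability} --- produces a reverse H\"older-type inequality linking $\fint_{B_{R/2}}|\X w|^{p(x)}$ to $(\fint_{B_R}|\X w|^{\gamma})^{1/\gamma}$ modulo an additive constant. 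A final Gehring-type iteration in the spirit of \cite{GheringZatorska-Goldstein} upgrades this to the desired $L^{p(x)(1+\sigma)}$ higher integrability, thereby controlling the first quantity inside the max.

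The second quantity, involving the frozen exponent $p(\bar x)$ at a fixed $\bar x\in B_R$, is obtained from the first via the standard Acerbi--Mingione trick: write
\[
|\X w|^{p(\bar x)(1+\sigma)}=|\X w|^{p(x)(1+\sigma)}\cdot|\X w|^{(p(\bar x)-p(x))(1+\sigma)},
\]
and control the fluctuation factor using $|p(\bar x)-p(x)|\leq\omega_p(R)$ together with $R^{-\omega_p(R)}\leq e^L$, valid on the level set $\{|\X w|\leq R^{-1}\}$, which is the dominant regime by virtue of $R\leq R_0<1/K_0$; on the complementary level set one absorbs the extra factor into the already-established higher integrability. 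The main obstacle is the careful bookkeeping in the Gehring step in the presence of the variable exponent --- in particular ensuring that after the iteration the RHS can indeed be written in the \emph{linear} form $(\fint_{B_R}|\X w|^{\gamma})^{1/\gamma}$ rather than a nonlinear power thereof --- together with the uniform verification of the Sobolev-conjugate condition $\gamma^*\geq p_2$ for every $p_2\in[\gamma_1,\gamma_2]$, which pins down the precise choice of $\gamma$.
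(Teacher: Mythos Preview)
Your central claim --- that $\gamma^*=Q\gamma/(Q-\gamma)\geq \gamma_2$ --- is false. Since $\gamma\leq 2Q'=2Q/(2Q-1)$ always, one has $\gamma^*\leq (2Q')^*=2Q/(2Q-3)$, a bound depending only on $Q$ and in particular independent of $\gamma_2$. Concretely, for $Q=4$ and $\gamma_2=10$ one gets $\gamma\leq 8/7$ and hence $\gamma^*\leq 8/5$, far below $p_2$, which may be as large as $10$. Thus the Poincar\'e--Sobolev inequality at exponent $\gamma$ cannot control the $L^{p_2}$-oscillation of $w$, and your chain from Caccioppoli to reverse H\"older breaks at its second link. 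Your case split (``$\gamma\leq\gamma_2'$ handles $\gamma_2\leq 2Q'$'', etc.)\ cannot rescue this: an \emph{upper} bound on $\gamma$ only yields an upper bound on $\gamma^*$, never the lower bound $\gamma^*\geq\gamma_2$ you need.

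The paper proceeds differently. It runs the proof of Theorem~\ref{higher integrability} unchanged (with $a\equiv1$, $F\equiv0$), in particular keeping the Poincar\'e--Sobolev step at exponent $p_1/\gamma_1$, which \emph{does} satisfy $(p_1/\gamma_1)^*\geq p_2$ thanks to the smallness $\omega_p(8R_0)\leq \gamma_1/(2Q)$; this yields \eqref{higher integrability estimate} with right-hand side $1+\fint_{B_R}|\X w|^{p(x)}$. Only \emph{afterwards} is a separate ``standard interpolation argument'' invoked to downgrade this right-hand side to the $L^\gamma$-average form; the paper does not spell this out but refers to the analogous Euclidean results in \cite{Ok_pxStein} and \cite{Baroni_pxStein}. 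Your sketch for the second (frozen-exponent) term in the max is fine in spirit, but it rests on having the first term, which you have not established.
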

	\section{H\"{o}lder continuity}	\label{Holder continuity section}
	In this section we prove Theorem \ref{Theorem holder continuity}. Our techniques here are adaptations of \cite{Acerbi_Mingione_C1alpha}. \bigskip 
	
	\textbf{Proof of Theorem \ref{Theorem holder continuity}:} 
	Note that the Euler-Lagrange equation for the functional $J$ is given by 
	\begin{align}\label{EL eqn for J}
		\operatorname{div}_{\mathbb{H}} \left( a\left(x\right)\left\lvert \X u\right\rvert^{p\left(x\right)-2}\X u\right) &= \operatorname{div}_{\mathbb{H}} F &&\text{ in } \Omega.
	\end{align}	
	Thus, every local minimizer of the $J$ is a local weak solution for \eqref{EL eqn for J}. Then the conclusions of part $(a)$ is just Theorem \ref{higher integrability}. 
	We fix $x_{0} \in \Omega$ and choose a radius $\bar{R}>0$ such that $B_{8\bar{R}}\left(x_{0}\right) \subset \subset \Omega$ and set \begin{align*}
		K_{0}:= \int_{B_{8\bar{R}}\left(x_{0}\right)} \left\lvert \X u \right\rvert^{p\left(x\right)}\ \mathrm{d}x +1.
	\end{align*} Now we use Thoerem \ref{higher integrability} with $\tilde{\Omega}$ replaced by $B_{8\bar{R}}\left(x_{0}\right)$ to determine the constants $R_{0}, c$ and $\sigma_{0}.$ Now we choose $\sigma \leq \min \left\lbrace \gamma_{1} -1, \sigma_{0}/2\right\rbrace $ and determine $R_{1}>0$ by the conditions 
	$\omega_p \left(8R_{1}\right) < \sigma /4 $ and $R_{1} < \min \left\lbrace R_{0}/16, 1/16 \right\rbrace.$ Now we choose a radius $0 < \tilde{R} < R_{1}/16$ small enough such that $B_{16R}\left(x_{c}\right)\equiv B_{16R} \subset \subset B_{R_{1}/4}\left(x_{0}\right)$ for any $0 < R < \tilde{R}/32 $ and $x_{c} \in \overline{B_{\tilde{R}}\left(x_{0}\right)}.$ We shall further reduce $\tilde{R}$ in subsequent steps. Set 
	\begin{align*}
		p_{1}:= \min_{\overline{B_{4R}}\left(x_c\right)}p\left(x\right), \quad  \qquad p_{2}:= \max\limits_{\overline{B_{4R}}\left(x_c\right)}p\left(x\right) \quad \text{ and } \quad p_{m}:= \max\limits_{\overline{B_{R_{1}}}\left(x_{0}\right)}p\left(x\right). 
	\end{align*}
	Note that by our choice of parameters, since $\sigma \leq p_{1} - 1$, we have 
	\begin{align}\label{p2 by px}
		p_{2} \left( 1 + \frac{\sigma}{4}\right) &\leq p_{1} \left(  1+ \frac{\sigma}{4} + \omega_p\left(4R\right) \right)  \notag \\ &\leq p\left(x\right)\left(  1+ \frac{\sigma}{4} + \omega_p\left(4R\right) \right) \leq p\left(x\right) \left(  1+ \sigma \right),
	\end{align}  for all $x \in \overline{B_{4R}}$. Since we also have $p_{m}\left( 1+ \frac{\sigma}{4} \right) \leq p\left(x\right)\left(1+ \sigma\right)$ in $B_{R_{1}}\left(x_{0}\right), $ by Theorem \ref{higher integrability}, we have $\X u \in L^{p_{m}}\left(B_{R_{1}}\left(x_{0}\right)\right).$ Consequently, we see that  $u \in HW^{1, p_{m}}\left( B_{R_{1}/2}\left(x_{0}\right)\right)$ is a weak solution of the following 
	\begin{align}\label{eq for u in Xu holder proof}
		\operatorname{div}_{\mathbb{H}}( a(x) \lvert \X u \rvert^{p\left(\cdot\right)-2} \X u) )  = \operatorname{div}_{\mathbb{H}}F
	\end{align} in $B_{R_{1}/2}\left(x_{0}\right).$ We set \begin{align*}
		K:= \int_{B_{R_{1}/4}\left(x_{0}\right)} \left\lvert \X u \right\rvert^{p_{m}} \qquad \text{ and } \qquad  \mathfrak{p}\left( \rho\right):= \max\limits_{\overline{B_{4\rho}\left( x_{c}\right)}}p\left(x\right).
	\end{align*} 
	Since \eqref{vanishing log holder condition thm} implies \eqref{log holder condition thm}, we may assume \eqref{log holder condition thm} with $\Lambda= L$. Now we claim the following.  
	\begin{claim}[Morrey bound]\label{Morrey bound claim}
		Assume $a$ is continuous,  $p$ satisfies \eqref{vanishing log holder condition thm} and $F \in {\rm{BMO}}_{\rm{loc}}\left(\Omega; \mathbb{R}^{2n}\right)$. Then for any $0 < \tau <Q,$ there exists a positive constant $$ C = C\left( Q, \gamma_{1}, \gamma_{2}, \gamma,  L, \omega_p, \omega_{a} ,\left[ F \right]_{{\rm{BMO}} \left( B_{R_{1}}\left(x_{0}\right);\mathbb{R}^{2n}\right)}, K_{0},K, R_1, \tau \right)$$ and a small enough choice for the radius $\tilde{R},$ depending only on $Q, \gamma_{1}, \gamma_{2}, \gamma,$ $  \omega_p, \omega_{a} , L,$ $ K_{0},K, R_1, \tau$ such that   
		whenever $0 < \rho<  \tilde{R}/\left(64\right)^{2}$ and $x_{c} \in \overline{B_{\tilde{R}}\left(x_{0}\right)},$ we have  
		\begin{align*}
			\int_{B_{\rho}\left(x_{c}\right)} \left\lvert \X u \right\rvert^{\mathfrak{p}\left(\rho\right)} \leq c \rho^{Q-\tau}. 
		\end{align*}
	\end{claim}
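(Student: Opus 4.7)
The plan is to follow the Acerbi--Mingione strategy of \cite{Acerbi_Mingione_C1alpha}, adapted to the subelliptic setting, through a freezing-and-comparison argument combining the constant-exponent $p$-harmonic regularity of Theorem \ref{Uhlenbeck estimate} with the higher integrability of Theorem \ref{higher integrability}. Fix $x_c \in \overline{B_{\tilde R}(x_0)}$ and a ball $B_R(x_c) \subset B_{R_1/2}(x_0)$ with $R$ small. I freeze the coefficient at $a(x_c)$, the exponent at $p_2 := \mathfrak{p}(R)$, and the right-hand side at its mean $(F)_{B_R}$, and let $v \in u + HW^{1,p_2}_0(B_R)$ be the minimizer provided by Proposition \ref{existence of minimizers for constant exponent}. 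Because the divergence of the constant $(F)_{B_R}$ vanishes, $v$ is a weak solution of a homogeneous constant-coefficient $p_2$-Laplacian, so Theorem \ref{Uhlenbeck estimate} furnishes the bound $\sup_{B_{R/2}} |\mathfrak X v|^{p_2} \leq c \fint_{B_R} |\mathfrak X v|^{p_2}$.

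The key step is a comparison estimate bounding $\int_{B_R} |\mathfrak X(u-v)|^{p_2}$ by three error terms. Testing the Euler--Lagrange equations for $u$ and $v$ against $u-v$ and invoking the monotonicity inequality \eqref{monotonicity} (together with \eqref{v estimate p less 2} and the $V_{p_2}$ map if $p_2<2$) produces a lower bound, while the difference of the two principal parts generates (i) a coefficient error $\omega_a(R)\int |\mathfrak X u|^{p(x)}$, (ii) an exponent-switching error of the form $\omega_p(R) \int_{B_R} |\mathfrak X u|^{p(x)} \log(e + |\mathfrak X u|)$, and (iii) a BMO error $\int_{B_R} |F - (F)_{B_R}|^{p_2'} \lesssim R^{Q} [F]_{\mathrm{BMO}}^{p_2'}$. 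For (ii) I apply \eqref{LpintoLlogL} combined with Theorem \ref{higher integrability} to rewrite $\int |\mathfrak X u|^{p(x)} \log(e+|\mathfrak X u|)$ as $R^{-c\omega_p(R)}$ times an $L^1$ quantity, so that the vanishing log-H\"older assumption \eqref{vanishing log holder condition thm} makes the whole prefactor $o(R^\varepsilon)$ for every $\varepsilon>0$. Combining the sup estimate for $v$ with the comparison bound and the triangle inequality, I obtain an excess-type decay
\begin{align*}
\int_{B_\rho(x_c)} |\mathfrak X u|^{p_2} \leq c\bigl(\rho/R\bigr)^Q \int_{B_R(x_c)} |\mathfrak X u|^{p_2} + \mathcal E(R), \qquad 0<\rho\leq R/2,
\end{align*}
where $\mathcal E(R) = o(R^{Q-\tau})$ for every fixed $\tau>0$.

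To close the iteration I switch exponents: on the left, $|\mathfrak X u|^{\mathfrak p(\rho)} \leq |\mathfrak X u|^{p_2} + 1$ costs only an additive $|B_\rho|$; on the right, I apply Theorem \ref{higher integrability} on a slightly enlarged ball together with \eqref{p2 by px} to upgrade $\int_{B_R} |\mathfrak X u|^{p_2}$ to a bounded multiple of $\int_{B_{2R}} |\mathfrak X u|^{p(x)} + \mathrm{error}$. Setting $\Phi(\rho) := \int_{B_\rho(x_c)} (1 + |\mathfrak X u|^{\mathfrak p(\rho)})$, this produces an iteration inequality $\Phi(\rho) \leq c(\rho/R)^Q \Phi(R) + c R^{Q-\tau/2}$ valid uniformly in $x_c\in\overline{B_{\tilde R}(x_0)}$ and in $0 < \rho\leq R/2 \leq \tilde R$. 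A standard Campanato-type iteration lemma then yields $\Phi(\rho) \leq c \rho^{Q-\tau}$ for every $\rho < \tilde R/(64)^2$, which is the claim.

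The main obstacle is Step 2, and specifically controlling the exponent-switching error (ii). One must show that $\omega_p(R) \cdot R^{-c\omega_p(R)}$ decays faster than any positive power loss incurred in the iteration; this is where the vanishing log-H\"older condition \eqref{vanishing log holder condition thm} is absolutely essential, and is the reason the statement requires \eqref{vanishing log holder condition thm} rather than merely \eqref{log holder condition thm}. Equally delicate is the uniformity of the constants in the higher integrability Theorem \ref{higher integrability} and in the sup estimate \eqref{sup estimate constant homogeneous} as $p_2$ ranges over $[\gamma_1,\gamma_2]$, which is exactly what Remarks \ref{p uniform constants Sobolev inequalities} and \ref{p uniform constant decay estimates} provide; without that uniformity the iteration would not close.
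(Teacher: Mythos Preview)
Your overall strategy matches the paper's: freeze the exponent at $p_2 = \mathfrak{p}(R)$ and the coefficient, compare $u$ to the constant-exponent $p_2$-harmonic function $v$ with the same boundary data, split the comparison error into coefficient, exponent-switching, and BMO pieces, and iterate. The use of Theorem~\ref{Uhlenbeck estimate} for the reference decay and of Theorem~\ref{higher integrability} to control the log-term via a small higher-integrability gain is exactly what the paper does.

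However, there is a genuine gap in your iteration step. You assert that the comparison error satisfies $\mathcal E(R) = o(R^{Q-\tau})$ \emph{additively}, and then write $\Phi(\rho) \leq c(\rho/R)^Q \Phi(R) + cR^{Q-\tau/2}$. This is not what the assumptions give. The vanishing log-H\"older condition \eqref{vanishing log holder condition thm} says only that $\omega_p(R)\log(1/R) \to 0$, i.e.\ it is $o(1)$, not $o(R^{\varepsilon})$ for any $\varepsilon>0$; likewise $\omega_a(R)$ is only $o(1)$. Since the integral $\int_{B_R}\lvert \X u\rvert^{p_2}$ is a priori only bounded (by $K$), the coefficient and exponent errors contribute terms of size $o(1)\cdot K$, which is $o(1)$ but \emph{not} $o(R^{Q-\tau})$. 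An iteration of the form $\Phi(\rho)\leq c(\rho/R)^Q\Phi(R) + o(1)$ yields only boundedness of $\Phi$, not any polynomial decay.

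The correct structure, which is what the paper derives in \eqref{final estimate for Morrey bound}, keeps the error terms as \emph{small multiplicative coefficients} on $\Phi$ itself:
\[
\Phi(\rho) \leq c\Bigl[\bigl(\tfrac{\rho}{R}\bigr)^Q + \varepsilon + \omega_a(4R) + \omega_p(4R)\log\tfrac{1}{4R}\Bigr]\Phi(2R) + cR^Q.
\]
Here the bracketed quantity can be made smaller than any prescribed threshold by first choosing $\varepsilon$ small and then choosing $\tilde R$ small (using continuity of $a$ and \eqref{vanishing log holder condition thm}), after which the standard iteration lemma (Lemma~5.13 in \cite{giaquinta-martinazzi-regularity}) delivers $\Phi(\rho)\leq c\rho^{Q-\tau}$. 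Your sketch should be reorganized so that the $\omega_a$ and $\omega_p\log(1/R)$ contributions multiply $\Phi(2R)$ rather than appearing as free-standing additive errors; once that is done, the argument closes exactly as you intend.
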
 
	Part (b) easily follows from the claim via a standard covering argument and Proposition \ref{Morrey's theorem}. We start with $0 < R < \tilde{R}/32 $.  To prove the claim, note that there exists a point $x_{\text{max}} \in \overline{B_{4R}}\left(x_c\right)$ such that 
	$ p\left(x_{\text{max}}\right) = p_2.$ Since  $u \in HW^{1,p_{m}}\left(B_{R_{1}/2}\left(x_{0}\right)\right)$ and thus consequently $u \in HW^{1,p_{2}}\left(B_R\left(x_c\right)\right),$ by Proposition \ref{existence of minimizers for constant exponent}, we find $v \in HW^{1, p_{2}}\left( B_{R}\left(x_c\right)\right)$  such that 
	\begin{align}\label{frozen system for comparison}
		\left\lbrace \begin{aligned}
			\operatorname{div}_{\mathbb{H}}( a(x_{\text{max}}) \lvert \X v \rvert^{p\left(x_{\text{max}}\right)-2} \X v ) ) &=0 &&\text{ in }  B_{R} \left(x_c\right), \\
			v &=u &&\text{ on } \origpartial B_{R}\left(x_c\right). 
		\end{aligned}\right. 
	\end{align} 
	For the rest of the proof, $B_\rho$ and $B_R$ denotes balls centred at $x_c$. Observe that it is quite possible that $x_{\text{max}} \notin B_{R},$ but this would not be a problem. Since $v$ minimizes the functional 
	\begin{align*}
		v \mapsto \int_{B_{R}} a\left( x_{\text{max}} \right) \left\lvert \X v \right\rvert^{p_{2}} \quad \text{ in } u + HW_{0}^{1,p_{2}}\left( B_{R}\right),  
	\end{align*}
	by minimality, \eqref{comparison algebraic ineq} and the estimate \eqref{sup estimate constant homogeneous}, for any $0 < \rho < R/2,$ we deduce  
	\begin{align}\label{decay for Morrey bound}
		\int_{B_{\rho}} \left\lvert\X u  \right\rvert^{p_{2}} \notag &\leq c \int_{B_{\rho}} \left\lvert \X v  \right\rvert^{p_{2}} + c \int_{B_{\rho}} \left( \left\lvert \X u\right\rvert^{2} + \left\lvert \X v\right\rvert^{2} \right)^{\frac{p_{2}-2}{2}}\left\lvert \X u - \X v\right\rvert^{2} \notag \\
		&\stackrel{\eqref{sup estimate constant homogeneous}}{\leq} c \left(\frac{\rho}{R}\right)^{Q}\int_{B_{R}} \left\lvert \X v \right\rvert^{p_{2}} + c \int_{B_{\rho}} \left( \left\lvert \X u\right\rvert^{2} + \left\lvert \X v\right\rvert^{2} \right)^{\frac{p_{2}-2}{2}}\left\lvert \X u - \X v\right\rvert^{2} \notag \\
		&\leq c \left(\frac{\rho}{R}\right)^{Q}\int_{B_{R}} \left\lvert \X u \right\rvert^{p_{2}} + c \int_{B_{R}} \left( \left\lvert \X u\right\rvert^{2} + \left\lvert \X v\right\rvert^{2} \right)^{\frac{p_{2}-2}{2}}\left\lvert \X u - \X v\right\rvert^{2}. 
	\end{align}
	Clearly, $v\in HW^{1,p(\cdot)} \left(B_R\right).$ So, the zero extension of the function $u-v$ belongs to $HW_{0}^{1, p(\cdot)}\left(B_{R_{1}/2}\left(x_{0}\right)\right)$ (cf. \cite{Freeman_Thesis_pxonCarnot}). Thus to estimate the last term on the right, we plug $u - v$ as a test function in the weak formulation of \eqref{eq for u in Xu holder proof} and \eqref{frozen system for comparison} and use standard manipulations to arrive at  
	\begin{align}\label{estimate 1}
		\gamma \int_{B_{R}} &\left( \lvert \X u \rvert^{2} + \lvert \X v \rvert^{2}\right)^{\frac{p_{2}-2}{2}}\left\lvert \X u - \X v\right\rvert^{2} \notag \\&\leq \int_{B_{R}}\left\langle   a(x_{\text{max}})\left(  \lvert \X u \rvert^{p_{2}-2} \X u- \lvert \X v \rvert^{p_{2}-2} \X v \right), \X u - \X v \right\rangle \notag 
		\\&= \int_{B_{R}}\left\langle   a(x_{\text{max}})\lvert \X u \rvert^{p_{2}-2} \X u, \X u - \X v \right\rangle :=I_{1} + I_{2} + I_{3},
	\end{align}
	where 
	\begin{align*}
		I_{1}&:= \int_{B_{R}}\left\langle   \left[ a(x_{\text{max}}) - a\left(x\right)\right] \left\lvert \X u \right\rvert^{p_{2}-2} \X u, \X u - \X v \right\rangle, \\ I_{2} &:=  \int_{B_{R}}\left\langle   a\left(x\right)\left\lvert \X u \right\rvert^{p\left(x\right)-2} \X u, \X u - \X v \right\rangle, 
		\\ 
		I_{3}&:= \int_{B_{R}}\left\langle   a\left(x\right)\left[ \lvert \X u \rvert^{p_{2}-2}\X u - \left\lvert \X u \right\rvert^{p\left(x\right)-2} \X u \right], 
		\X u - \X v \right\rangle. 
	\end{align*}
	Using the minimality of $v$, we can easily estimate $I_{1}$ to obtain 
	\begin{align}\label{estimate of a term}
		\left\lvert I_{1}\right\rvert &\leq \int_{B_{R}}   \left\lvert  a\left(x\right) - a(x_{\text{max}})\right\rvert \left\lvert \Xu  \right\rvert^{p_{2}-1}\left\lvert \X u  - \X v \right\rvert \leq c\omega_{a}\left(4R\right)  \int_{B_{R}}\left\lvert \X u  \right\rvert^{p_{2}}. 
	\end{align}
	The weak formulation for \eqref{eq for u in Xu holder proof} and Young's inequality with $\varepsilon>0$ implies  
	\begin{align}\label{estimate of F term}
		\left\lvert I_{2}\right\rvert &= \left\lvert \int_{B_{R}}\left\langle   a\left(x\right)\left\lvert \X u \right\rvert^{p\left(x\right)-2} \X u, \X u - \X v \right\rangle \right\rvert \notag \\&= \left\lvert \int_{B_{R}}\left\langle F - \left(F\right)_{R}, \X u - \X v\right\rangle \right\rvert \leq \varepsilon \int_{B_{R}} \left\lvert \X u  \right\rvert^{p_{2}} + C_{\varepsilon} \int_{B_{R}} \left\lvert F - \left(F\right)_{R} \right\rvert^{\frac{p_{2}}{p_{2}-1}},
	\end{align}
	for any $\varepsilon>0.$ For $I_{3},$ using the elementary inequality $\left \lvert e^x - 1 \right \rvert \leq \lvert x \rvert e^{\lvert x\rvert}$, we have 
	\begin{align}\label{estimate of the log term}
		\left\lvert I_{3}\right\rvert &\leq L\int_{B_{R}}\left\lvert \left\lvert \X u  \right\rvert^{p\left(x\right)-2} \X u  - \lvert \X u  \rvert^{p_{2}-2} \X u  \right\rvert\left\lvert \X u  - \X v \right\rvert \notag \\
		&\leq cL\omega_p\left(4R\right)\int_{B_{R}}\left\lvert  \X u \right\rvert^{p_{2}-1}\left\lvert \log \right\rvert \left( \left\lvert \X u  \right\rvert\right)\rvert\left\lvert \X u  - \X v\right\rvert \notag \\
		&\leq cL \omega_p\left(4R\right) \left( \int_{B_{R}} \left\lvert \X u  -\X v \right\rvert^{p_{2}} \right)^{\frac{1}{p_{2}}}\left(  \int_{B_{R}} \left\lvert \X u  \right\rvert^{p_{2}}\left\lvert \log \right\rvert^{\frac{p_{2}}{p_{2}-1}} \left( \left\lvert \X u \right\rvert\right)\right)^{\frac{p_{2} -1}{p_{2}}} \notag \\ 
		&\leq cL \omega_p\left(4R\right)\int_{B_{R}} \left\lvert \X u\right\rvert^{p_{2}} + \omega_p\left(4R\right) \int_{B_{R}} \left\lvert \X u \right\rvert^{p_{2}}\left\lvert \log \right\rvert^{\frac{p_{2}}{p_{2}-1}} \left( \left\lvert \X u  \right\rvert\right),
	\end{align}
	where we have used the minimality of $v$ in the last line. Now observe that by \eqref{less than e}, if $\left\lvert \X u \right\rvert < e,$ then the last term on the right can be estimated by $cR^{Q}.$ Thus, we have 
	\begin{align}\label{est -4}
		\int_{B_{R}} & \left\lvert \X u \right\rvert^{p_{2}} \left\lvert \log \right\rvert^{\frac{p_{2}}{p_{2}-1}} \left( \left\lvert \X u \right\rvert\right) \notag \\
		&\leq c\int_{B_{R}\cap \left\lbrace \left\lvert \X u  \right\rvert \geq e\right\rbrace} \left\lvert \X u  \right\rvert^{p_{2}}\left\lvert \log \right\rvert^{\frac{p_{2}}{p_{2}-1}} \left( \left\lvert \X u \right\rvert\right) + cR^{Q} \notag \\
		&\leq cR^{Q}\fint_{B_{R}} \left\lvert \X u \right\rvert^{p_{2}}\left\lvert \log \right\rvert^{\frac{p_{2}}{p_{2}-1}} \left( e + \left\lvert \X u \right\rvert^{p_{2}}\right) + cR^{Q} \notag \\
		&\begin{aligned}[b]
			\stackrel{\eqref{concavity of log}}{\leq} cR^{Q}\fint_{B_{R}} &\left\lvert \X u \right\rvert^{p_{2}}\left\lvert \log \right\rvert^{\frac{p_{2}}{p_{2}-1}} \left( e + \frac{\left\lvert \X u \right\rvert^{p_{2}}}{\fint_{B_{R}}\left\lvert \X u \right\rvert^{p_{2}}}\right) \\&+ cR^Q\log^{\left( \frac{p_{2}}{p_{2}-1}\right)} \left( e + \fint_{B_{R}}\left\lvert \X u  \right\rvert^{p_{2}}\right)\fint_{B_{R}} \left\lvert \X u \right\rvert^{p_{2}} + cR^{Q}.
		\end{aligned}
	\end{align} 
	Now, thanks to \eqref{LlogbetaL estimate}, we have 
	\begin{align}
		cR^Q\fint_{B_{R}} \left\lvert \X u \right\rvert^{p_{2}}&\left\lvert \log \right\rvert^{\frac{p_{2}}{p_{2}-1}} \left( e + \frac{\left\lvert \X u \right\rvert^{p_{2}}}{\fint_{B_{R}}\left\lvert \X u  \right\rvert^{p_{2}}}\right) \notag \\&\leq c\sigma^{-\frac{p_{2}}{p_{2}-1}}R^{Q}\left( \fint_{B_{R}} \left\lvert \X u  \right\rvert^{p_{2}\left(1+ \sigma/4\right)}\right)^{\frac{1}{1+ \sigma/4}}. \label{est -5}
	\end{align}
	Note that using \eqref{p2 by px}, we can trivially estimate 
	\begin{align*}
		\fint_{B_{R}} \left\lvert \X u \right\rvert^{p_{2}\left( 1+ \frac{\sigma}{4}\right)} 
		&\stackrel{\eqref{p2 by px}}{\leq}   1+  \frac{1}{\left\lvert B_{R}\right\rvert}	\int_{B_{R}\cap \left\lbrace \left\lvert \X u \right\rvert \geq 1 \right\rbrace } \left\lvert \X u \right\rvert^{p\left(x\right)\left(  1+ \frac{\sigma}{4} + \omega_p\left(4R\right) \right)}  \\
		&\leq 1 + \fint_{B_{R}} \left\lvert \X u \right\rvert^{p\left(x\right)\left(  1+ \frac{\sigma}{4} + \omega_p\left(4R\right) \right)}.
	\end{align*}
	Again using \eqref{p2 by px} and replacing $R$ by $2R$ and taking $\xi= \left(F\right)_{2R}$ in \eqref{higher integrability estimate}, we deduce 
	\begin{multline*}
		\fint_{B_{R}} \left\lvert \X u \right\rvert^{p\left(x\right)\left(  1+ \frac{\sigma}{4} + \omega_p\left(4R\right) \right)} 
		\leq c   + c \left(  \fint_{B_{2R}}\left\lvert F   -  \left(F\right)_{2R}\right\rvert^{\frac{\gamma_{1}\left( 1+\sigma\right)}{\gamma_{1}-1}}\right)^{\frac{1+ \frac{\sigma}{4} + \omega_p\left(4R\right)}{1 + \sigma}} \\ + c\left( \fint_{B_{2R}}\left\lvert \X u \right\rvert^{p\left(x\right)}\right)^{\left(  1+ \frac{\sigma}{4} + \omega_p\left(4R\right) \right)} .
	\end{multline*}
	Thus, we deduce 
	\begin{align*}
		&\left( \fint_{B_{R}} \left\lvert \X u  \right\rvert^{p_{2}\left(1+ \sigma/4\right)}\right)^{\frac{1}{1+ \sigma/4}} \\&\qquad \leq c + c \left( \fint_{B_{2R}}\left\lvert \X u \right\rvert^{p\left(x\right)}\right)^{\frac{\left(  1+ \frac{\sigma}{4} + \omega_p\left(4R\right)\right)}{1+ \sigma/4}} + c \left[ F\right]_{\rm{BMO}}^{\frac{\gamma_{1}\left( 1+ \frac{\sigma}{4} + \omega_p\left(4R\right)\right)}{\left( \gamma_{1}-1\right)\left(1+ \sigma/4\right) }} \\
		&\qquad \leq c + c \left( \fint_{B_{2R}}\left\lvert \X u \right\rvert^{p\left(x\right)}\right)^{\frac{\omega_p\left(4R\right)}{1+ \sigma/4}}\left( \fint_{B_{2R}}\left\lvert \X u \right\rvert^{p\left(x\right)}\right) +  c \left[ F\right]_{\rm{BMO}}^{\frac{\gamma_{1}\left( 1+ \frac{\sigma}{4} + \omega_p\left(4R\right)\right)}{\left( \gamma_{1}-1\right)\left(1+ \sigma/4\right) }} . 
	\end{align*}
	But now it is easy to see that we have 
	\begin{align*}
		\left( \fint_{B_{2R}}\left\lvert \X u \right\rvert^{p\left(x\right)}\right)^{\frac{\omega_p\left(4R\right)}{1+ \sigma/4}} \leq  cR^{-\frac{\omega_p\left(4R\right)Q}{1+ \sigma/4}}\left(\int_{B_{2R}}\left\lvert \X u \right\rvert^{p\left(x\right)}\right)^{\frac{\omega_p\left(4R\right)}{1+ \sigma/4}} \stackrel{\eqref{log holder condition thm}}{\leq} C(L, K_{0}). 
	\end{align*}
	This implies 
	\begin{align*}
		R^{Q}&\left( \fint_{B_{R}} \left\lvert \X u  \right\rvert^{p_{2}\left(1+ \sigma/4\right)}\right)^{\frac{1}{1+ \sigma/4}} \notag \\ &\leq cR^{Q} + cR^{Q}\left[ F\right]_{\rm{BMO}}^{\frac{\gamma_{1}\left( 1+ \frac{\sigma}{4} + \omega_p\left(4R\right)\right)}{\left( \gamma_{1}-1\right)\left(1+ \sigma/4\right) }} + c\left( \int_{B_{2R}}\left\lvert \X u \right\rvert^{p\left(x\right)}\right) \notag \\ &\leq cR^{Q}  + c\left( \int_{B_{2R}}\left( 1 + \left\lvert \X u \right\rvert^{p_{2}}\right) \right). 
	\end{align*}
	Combining with \eqref{est -5}, we arrive at 
	\begin{multline}\label{est -2}
		cR^Q\fint_{B_{R}} \left\lvert \X u \right\rvert^{p_{2}}\left\lvert \log \right\rvert^{\frac{p_{2}}{p_{2}-1}} \left( e + \frac{\left\lvert \X u \right\rvert^{p_{2}}}{\fint_{B_{R}}\left\lvert \X u  \right\rvert^{p_{2}}}\right)  \\ \leq cR^{Q}  + c\left( \int_{B_{2R}}\left( 1 + \left\lvert \X u \right\rvert^{p_{2}}\right) \right). 
	\end{multline}
	For the second term  in \eqref{est -4}, note that we have 
	\begin{align}\label{est -6}
		R^{Q}&\log^{\left( \frac{p_{2}}{p_{2}-1}\right)} \left( e + \fint_{B_{R}}\left\lvert \X u  \right\rvert^{p_{2}}\right)\fint_{B_{R}} \left\lvert \X u \right\rvert^{p_{2}} \notag \\&=  R^{Q}\log \left( e + \fint_{B_{R}}\left\lvert \X u  \right\rvert^{p_{2}}\right)\log^{\left( \frac{1}{p_{2}-1}\right)} \left( e + \fint_{B_{R}}\left\lvert \X u  \right\rvert^{p_{2}}\right)\fint_{B_{R}} \left\lvert \X u \right\rvert^{p_{2}}\notag \\ 
		&\leq R^{Q}\log \left( e + \fint_{B_{R}}\left\lvert \X u  \right\rvert^{p_{2}}\right)\log^{\left( \frac{1}{\gamma_{1}-1}\right)} \left( e + \fint_{B_{R}}\left\lvert \X u  \right\rvert^{p_{2}}\right)\left( e + \fint_{B_{R}} \left\lvert \X u \right\rvert^{p_{2}} \right) \notag \\ &\stackrel{\eqref{growth of log}}{\leq} c\left(\gamma_{1}\right)\sigma^{-\frac{1}{\gamma_{1}-1}}R^{Q}\log \left( e + \fint_{B_{R}}\left\lvert \X u  \right\rvert^{p_{2}}\right)\left( e + \fint_{B_{R}} \left\lvert \X u \right\rvert^{p_{2}} \right)^{1+ \frac{\sigma}{4}}  \notag \\
		&\leq cR^{Q}\log \left( e + \fint_{B_{R}}\left\lvert \X u  \right\rvert^{p_{2}}\right) \left( e^2 + \fint_{B_{R}} \left\lvert \X u \right\rvert^{p_{2}\left( 1 + \frac{\sigma}{4}\right)} \right)\notag \\
		&\leq c\log \left( e + \fint_{B_{R}}\left\lvert \X u  \right\rvert^{p_{2}}\right) \left( e^2R^{Q} + \int_{B_{R}} \left\lvert \X u \right\rvert^{p_{2}\left( 1 + \frac{\sigma}{4}\right)} \right). 
	\end{align}
	Now we use \eqref{higher integrability estimate} with $R$ replaced by $R_1$ and with $\xi= \left(F\right)_{_{R_1}(x_0)}$  to obtain 
	\begin{multline*}
		\int_{B_{\frac{R_1}{2}}(x_0)}\left\lvert \X u \right\rvert^{p\left(x\right)\left(1 + \sigma\right)}\ \mathrm{d}x
		\leq c R_1^{-Q\sigma}\left( \int_{B_{R_1}(x_0)}\left\lvert \X u \right\rvert^{p\left(x\right)}\ \mathrm{d}x + 1\right)^{1+\sigma} \\ + c \left( 1+  \int_{B_{R_1}(x_0)}\left\lvert F - \left(F\right)_{_{R_1}(x_0)}\right\rvert^{\frac{\gamma_{1}\left( 1+\sigma\right)}{\gamma_{1}-1}}\ \mathrm{d}x \right).
	\end{multline*}
	This together with \eqref{p2 by px} and the facts that $B_{4R} \subset \subset B_{\frac{R_1}{2}}(x_0)$, $0<\sigma\leq \sigma_0$ implies 
	\begin{align}\label{est -1}
		\int_{B_{R}} \left\lvert \X u  \right\rvert^{p_{2}\left(1+ \sigma/4\right)} &\leq c\int_{B_{R}} \left\lvert \X u  \right\rvert^{p(x)\left(1+ \sigma\right)} + cR^{Q} \notag \\
		&\leq C\left( K_{0}, R_{1},  \left[ F\right]_{\rm{BMO}}\right) + cR^{Q}.
	\end{align}
	Combining \eqref{est -6} and \eqref{est -1}, we have 
	\begin{align}\label{est -3}
		R^{Q}\log^{\left( \frac{p_{2}}{p_{2}-1}\right)} \left( e + \fint_{B_{R}}\left\lvert \X u  \right\rvert^{p_{2}}\right)\fint_{B_{R}} \left\lvert \X u \right\rvert^{p_{2}} \leq c\log \left( e + \fint_{B_{R}}\left\lvert \X u  \right\rvert^{p_{2}}\right). 
	\end{align}
	But using \eqref{est -1} again, we deduce 
	\begin{align}\label{est 0}
		\log \left( e + \fint_{B_{R}}\left\lvert \X u  \right\rvert^{p_{2}}\right) &\leq c \log \left( e + c\left( K_{0}, R_{1},  \left[ F\right]_{\rm{BMO}}\right)\left(4R\right)^{-Q}\right) \notag \\&\leq c \log \left( \frac{1}{4R}\right) \leq c \log \left( \frac{1}{4R}\right) \left( \int_{B_{2R}}\left( 1 + \left\lvert \X u \right\rvert^{p_{2}}\right) \right). 
	\end{align}
	Combining \eqref{est -3} and \eqref{est 0}, we conclude 
	\begin{align}\label{est 1}
		R^{Q}\log^{\left( \frac{p_{2}}{p_{2}-1}\right)} \left( e + \fint_{B_{R}}\left\lvert \X u  \right\rvert^{p_{2}}\right)\fint_{B_{R}} \left\lvert \X u \right\rvert^{p_{2}} \leq c \log \left( \frac{1}{4R}\right) \left( \int_{B_{2R}}\left( 1 + \left\lvert \X u \right\rvert^{p_{2}}\right) \right). 
	\end{align}	
	Now plugging \eqref{est 1} and \eqref{est -2} into \eqref{est -4}, and plugging the resulting estimate back into  \eqref{estimate of the log term}, we arrive at 
	\begin{align}\label{final esti log term}
		\left\lvert I_{3}\right\rvert &\leq cR^{Q} + c\omega_p\left(4R\right)\log \left(\frac{1}{4R}\right)\left( \int_{B_{2R}}\left( 1 + \left\lvert \X u \right\rvert^{p_{2}}\right) \right).  
	\end{align}
	Now, using \eqref{decay for Morrey bound},\eqref{estimate 1}, \eqref{estimate of a term}, \eqref{estimate of F term} and \eqref{final esti log term}, we obtain 
	\begin{align}\label{final estimate for Morrey bound}
		\int_{B_{\rho}} &\left\lvert\X u  \right\rvert^{p_{2}} \notag \\&\begin{multlined}[b]
			\leq c \left[ \left(\frac{\rho}{R}\right)^{Q} + \varepsilon + \omega_{a}\left(4R\right)+ \omega_p\left(4R\right)\log \left(\frac{1}{4R}\right) \right] \left( \int_{B_{2R}}\left( 1 + \left\lvert \X u \right\rvert^{p_{2}}\right) \right) \\ + cR^{Q}+c \int_{B_{R}} \left\lvert F - \left(F\right)_{R} \right\rvert^{\frac{p_{2}}{p_{2}-1}}.
		\end{multlined}
	\end{align} 
	Using the fact that $\rho \mapsto \mathfrak{p}\left(\rho\right)$ is nondecreasing and $\mathfrak{p}(R)=p_{2}$, we have 
	\begin{align*}
		\int_{B_{\rho}} \left(  \left\lvert \X u  \right\rvert^{\mathfrak{p}\left( \rho\right)} + 1\right)\ \mathrm{d}x \leq c\int_{B_{\rho}} \left(  \left\lvert \X u  \right\rvert^{p_{2}} + 1\right)\ \mathrm{d}x.
	\end{align*}
	Combining this with \eqref{final estimate for Morrey bound} and setting $\Phi \left( \rho\right):= \int_{B_{\rho}} \left(  \left\lvert \X u \right\rvert^{\mathfrak{p}\left( \rho\right)} + 1\right)\ \mathrm{d}x, $ we arrive at 
	\begin{align*}
		\Phi \left( \rho\right) \leq c \left[ \left(\frac{\rho}{R}\right)^{Q} + \varepsilon + \omega_{a}\left(4R\right)+ \omega_p\left(4R\right)\log \left(\frac{1}{4R}\right) \right] 	\Phi \left( 2R\right) + cR^{Q}.  
	\end{align*}
	Now we make use of \eqref{vanishing log holder condition thm} and the standard iteration lemma (see Lemma 5.13 in \cite{giaquinta-martinazzi-regularity}) and choose $\varepsilon >0$ and $R_{1}$ sufficiently small to establish our  claim. \smallskip
	
	Now we prove part $(c).$ We set $\alpha := \min \left\lbrace \alpha_{1}, \alpha_{2}, \alpha_{3}/2\right\rbrace $. Since everything is a local estimate, we would show that $\X u$ is H\"{o}lder continuous in a ball of small enough radius, but otherwise arbitrary. Now we prove the H\"{o}lder continuity of the $\X u$ in $B_{\tilde{R}}\left(x_{0}\right),$ where $\tilde{R}>0$ is the radius given by Claim \ref{Morrey bound claim}. Given $\theta>0$, we choose $0 < \rho_{0} < 1$ such that $\left(2\rho_{0}\right)^{\frac{1}{1+\theta}} < \tilde{R}/\left(64\right)^{2}.$ Precise choice of $\theta$ will be made later. Set 
	$$ r= r \left( \rho \right) := \left(2\rho\right)^{\frac{1}{1+\theta}} \qquad \text{ and } \qquad \tau := \frac{\alpha \beta}{8\left( Q+\beta\right)}. $$ Clearly, for any $0 < \rho < \rho_{0},$ we have 
	$r\left( \rho \right) < \tilde{R}/\left( 64\right)^{2}.$ Thus, by Claim \ref{Morrey bound claim}, we have 
	\begin{align}\label{Morrey decay}
		\int_{B_{r \left( \rho \right)}\left(x_{c}\right)}\left\lvert \X u  \right\rvert^{\mathfrak{p}\left(r \left( \rho \right)\right)} \leq c \left(r \left( \rho \right)\right)^{Q - \tau }.
	\end{align}
	As before, we find $v \in HW^{1, \mathfrak{p}\left(r \left( \rho \right)\right)}\left( B_{\left(r \left( \rho \right)\right)}\right)$ such that 
	\begin{align}\label{frozen system for comparison Holder}
		\left\lbrace \begin{aligned}
			\operatorname{div}_{\mathbb{H}}( a(\bar{x}_{\text{max}}) \lvert \X v  \rvert^{p\left(\bar{x}_{\text{max}}\right)-2} \X v ) ) &=0 &&\text{ in }  B_{r \left( \rho \right)}, \\
			v&=u &&\text{ on } \origpartial B_{r \left( \rho \right)}, 
		\end{aligned}\right. 
	\end{align} 
	where $\bar{x}_{\text{max}} \in \overline{B_{4r \left( \rho \right)}}$ is a point where $p\left(\bar{x}_{\text{max}}\right)= \mathfrak{p}\left(r \left( \rho \right)\right).$ Hence, as before in \eqref{estimate 1}, we have 
	\begin{align}\label{estimate 1 for holder}
		\gamma \int_{B_{r}} \left( \lvert \X u \rvert^{2} + \lvert \X v \rvert^{2}\right)^{\frac{\mathfrak{p}\left(r\right)-2}{2}}\left\lvert \X u - \X v\right\rvert^{2} \leq I_{1} + I_{2} + I_{3}. 
	\end{align}
	As before in \eqref{estimate of a term},  we have 
	\begin{align}\label{estimate of I1 holder}
		\left\lvert I_{1}\right\rvert &\leq c\omega_{a}\left(4r\right)  \int_{B_{r}}\left\lvert \X u  \right\rvert^{\mathfrak{p}\left(r\right)} \stackrel{\eqref{Morrey decay}}{\leq} cr^{\left(\alpha + Q -\tau \right) }. 
	\end{align}
	We estimate $I_{2}$ slightly differently now. We deduce 
	\begin{align}\label{estimate of F term holder}
		\left\lvert I_{2}\right\rvert &\leq \int_{B_{r}}\left\lvert F - \left(F\right)_{r}\right\rvert\left\lvert \X u  - \X v\right\rvert \notag\\
		&\leq c \left( \int_{B_{r}} \left\lvert F - \left(F\right)_{r} \right\rvert^{\frac{\mathfrak{p}\left(r\right)}{\mathfrak{p}\left(r\right)-1}}\right)^{\frac{\mathfrak{p}\left(r\right)-1}{\mathfrak{p}\left(r\right)}}\left( \int_{B_{r}} \left\lvert \X u  \right\rvert^{\mathfrak{p}\left(r\right)} \right)^{\frac{1}{\mathfrak{p}\left(r\right)}} \notag\\
		&\stackrel{\eqref{Morrey decay}}{\leq} 
		c \left( \int_{B_{r}} \left\lvert F - \left(F\right)_{r} \right\rvert^{\frac{\mathfrak{p}\left(r\right)}{\mathfrak{p}\left(r\right)-1}}\right)^{\frac{\mathfrak{p}\left(r\right)-1}{\mathfrak{p}\left(r\right)}}r^{\frac{Q-\tau}{\mathfrak{p}\left(r\right)}} \notag\\
		&\leq c\left[ F\right]_{C^{0, \alpha}} r^{\left(\alpha + Q - \frac{\tau}{\mathfrak{p}\left(r\right)}\right)} \leq c\left[ F\right]_{C^{0, \alpha}} r^{\left(\alpha + Q - \tau \right)} . 
	\end{align}
	For $I_{3},$ once again as in \eqref{final esti log term}, we have 
	\begin{align}\label{estimate of I11 holder}
		\left\lvert I_{3}\right\rvert \leq cr^{Q} + c \omega_p\left(4r\right)\log \left(\frac{1}{4r}\right)\int_{B_{2r}} 1+\left\lvert \X u\right\rvert^{\mathfrak{p}\left(r\right)}\stackrel{\eqref{Morrey decay}}{\leq} cr^{\left(\alpha + Q - \tau \right)}. 
	\end{align}	
	Thus, combining \eqref{estimate of I1 holder}, \eqref{estimate of F term holder} and \eqref{estimate of I11 holder}, we obtain
	\begin{align}\label{estimate of I holder}
		\int_{B_{r}} \left( \lvert \X u \rvert^{2} + \lvert \X v \rvert^{2}\right)^{\frac{\mathfrak{p}\left(r\right)-2}{2}}\left\lvert \X u - \X v\right\rvert^{2} \leq cr^{\left(\alpha + Q -\tau \right)}. 
	\end{align}
	Now if $\mathfrak{p}\left(r\right) \geq 2,$ we have 
	\begin{align*}
		\int_{B_{r}} \left\lvert \X u - \X v \right\rvert^{\mathfrak{p}\left(r\right)} &\leq c \int_{B_{r}} \left( \lvert \X u \rvert^{2} + \lvert \X v \rvert^{2}\right)^{\frac{\mathfrak{p}\left(r\right)-2}{2}}\left\lvert \X u - \X v \right\rvert^{2} \leq cr^{\left(\alpha + Q -\tau \right)}. 
	\end{align*}
	On the other hand, if $1 < \mathfrak{p}\left(r\right)< 2,$ we have 
	\begin{align*}
		\int_{B_{r}} &\left\lvert \X u - \X v\right\rvert^{\mathfrak{p}\left(r\right)} \\
		&\leq \left(\int_{B_{r}} \left( \lvert \X u \rvert^{2} + \lvert \X v \rvert^{2}\right)^{\frac{\mathfrak{p}-2}{2}}\left\lvert \X u - \X v\right\rvert^{2}\right)^{\frac{\mathfrak{p}}{2}}\left( \int_{B_{r}} \left( \lvert \X u \rvert^{2} + \lvert \X v \rvert^{2}\right)^{\frac{\mathfrak{p}}{2}}\right)^{\frac{2-\mathfrak{p}}{2}} \\
		&\leq cr^{\frac{\mathfrak{p}}{2}\left(\alpha + Q -\tau \right)}\left( \int_{B_{r}} \lvert \X u  \rvert^{\mathfrak{p}(r)}\right)^{\frac{2-\mathfrak{p}(r)}{2}} \leq c r^{\left(\frac{\alpha}{2} + Q -\tau \right)}. 
	\end{align*}
	Thus, in both cases, we arrive at 
	\begin{align}\label{comparison estimate holder}
		\int_{B_{r}} \left\lvert \X u - \X v\right\rvert^{\mathfrak{p}\left(r\right)} \leq c r^{\left(\frac{\alpha}{2} + Q -\tau \right)}. 
	\end{align}
	Thus, using minimality of integral means, we deduce 
	\begin{align*}
		\int_{B_{\rho}}& \left\lvert \X u  - \left( \X u \right)_{B_{\rho}} \right\rvert^{\mathfrak{p}\left(r\right)} \\
		&\leq c\int_{B_{\rho}} \left\lvert \X v - \left( \X v \right)_{B_{\rho}} \right\rvert^{\mathfrak{p}\left(r\right)} + c\int_{B_{\rho}} \left\lvert \X u  - \X v \right\rvert^{\mathfrak{p}\left(r\right)} \\
		&\stackrel{\eqref{oscillation estimate constant homogeneous}}{\leq} c\left( \frac{\rho}{r}\right)^{\beta \mathfrak{p}\left(r\right)}\rho^{Q} \left( \fint_{B_{r}} \lvert \X u  \rvert^{\mathfrak{p}}\right) + c\int_{B_{\rho}} \left\lvert \X u  - \X v \right\rvert^{\mathfrak{p}\left(r\right)} \\
		&\stackrel{\eqref{comparison estimate holder}}{\leq}c\left( \frac{\rho}{r}\right)^{\beta \mathfrak{p}\left(r\right)}\rho^{Q} \left( \fint_{B_{r}} \lvert \X u  \rvert^{\mathfrak{p}}\right) + c r^{\left(\frac{\alpha}{2} + Q -\tau \right)} \\
		&\stackrel{\eqref{Morrey decay}}{\leq}c\left( \frac{\rho}{r}\right)^{\beta \mathfrak{p}\left(r\right)}\rho^{Q} r^{-\tau}+ c r^{\left(\frac{\alpha}{2} + Q -\tau \right)} \leq c\left( \frac{\rho}{r}\right)^{\beta}\rho^{Q} r^{-\tau}+ c r^{\left(\frac{\alpha}{2} + Q -\tau \right)}.  
	\end{align*}
	Now choose $\theta = \alpha/2\left(Q+\beta\right)$ and observe that this makes the powers of $\rho$ the same in both terms, which is equal to $Q + \mu,$ for some (explicitly computable) $\mu >0.$ Thus we have 
	\begin{align*}
		\int_{B_{\rho}} \left\lvert \X u  - \left( \X u \right)_{B_{\rho}} \right\rvert^{\gamma_{1}} &\leq c\rho^{Q}\left( \fint_{B_{\rho}} \left\lvert \X u  - \left( \X u \right)_{B_{\rho}} \right\rvert^{p_{2}\left(r\right)} \right)^{\frac{\gamma_{1}}{\mathfrak{p}\left(r\right)}} \\
		&\leq c\rho^{Q}\left( c\rho^{\mu} \right)^{\frac{\gamma_{1}}{\mathfrak{p}\left(r\right)}} \leq c\rho^{\left( Q + \frac{\gamma_{1}\mu}{\gamma_{2}}\right)},
	\end{align*}
	as $\mathfrak{p}\left(r\right) < \gamma_{2}$ and $\rho_{0} < 1.$ Since this holds for any $x_{c} \in \overline{B_{\tilde{R}}\left(x_{0}\right)}$ and any $0 < \rho < \rho_{0},$ by Campanato's characterization (see Theorem 5.5, Page 78 in \cite{giaquinta-martinazzi-regularity}), this implies the H\"{o}lder continuity of $\X u $ and completes the proof. \qed 
	
	\section{Borderline continuity}\label{continuity section}
	\subsection{Basic setup}\label{basic setup for comparison}

	Let $p$ be as in Theorem \ref{main theorem} and $\gamma:= \min \{\gamma_1, \gamma_2', 2Q'  \}.$ Note that by Proposition \ref{Lorenz-Zygmund-Sobolev_implies_logholder}, this implies $p$ is log-H\"{o}lder continuous and thus, we can use Theorem \ref{higher integrability 2}. 
	We would use the following shorthands. 
	\begin{align}
		\mathfrak{P}_{R, \theta}&:= R \log \left(\frac{1}{R}\right)\left(  \fint_{B_{R}}\left\lvert  \X p \right\rvert^{\theta} \right)^{\frac{1}{\theta}},  \label{mathfrak p} 
	\end{align}
	Now set 
	\begin{align}\label{q and theta}
		\theta = \frac{2Q\left(1 + \sigma_{0}\right)}{\sigma_{0}\left(Q+2\right)+2}, 
	\end{align}
	where $\sigma_0$ is given by Theorem \ref{higher integrability 2}.
	Note that this implies 
	\begin{align*}
		1 < \theta < Q \qquad \text{ and } \qquad \theta^{\ast}:= \frac{Q\theta}{Q-\theta} = \frac{2\left(1 + \sigma_{0}\right)}{\sigma_{0}}. 
	\end{align*}
	We set 
	\begin{align}
		\bar{\sigma}:= \min \left\lbrace \gamma_1-1,  \sigma_{1} \right\rbrace. \label{sigma bar}
	\end{align}
	Let $R_0$ be as in Theorem \ref{higher integrability 2}. Now, $p$ is log-H\"{o}lder continuous, we can choose $0<\bar{R}\leq R_0/4$ so small such that 
	\begin{align}\label{smallness of wp}
		\omega_p\left(\bar{4R}\right) \leq \min \left\lbrace \frac{ \bar{\sigma}}{4}, \frac{\gamma \bar{\sigma}}{4\gamma^{'}\left(2 + \bar{\sigma}\right)}, \frac{\gamma_1}{2Q}\right\rbrace 
	\end{align}
	and we can use Proposition \ref{def of Lorentz Zygmund sum} to further reduce $\bar{R}$ if necessarily to have  
	\begin{align}\label{smallness of mathfrak a+mathfrak p}
		\mathfrak{P}_{R, \theta} \leq 1 \qquad \text{ for all } 0 < R \leq \frac{\bar{R}}{4}.
	\end{align}
	Now fix a point $x_{0} \in \Omega$ such that $B_{4\bar{R}}(x_{0}) \subset  \Omega.$ Clearly $\bar{R}$ depends on $Q, \gamma_{1},$ $\gamma_{2}, \nu, L$ and $K_0$.  From now on, all balls would be centered at $x_{0}$ if the center is not specified and we omit writing the center of the balls. Now the smallness in \eqref{smallness of wp} implies that for any $0<R \leq \bar{R}/4$
	$$p_{R} - \inf_{B_{2R}} p (\cdot) \leq \omega_p(\bar{2R}) \leq  \bar{\sigma} \leq \gamma_1 \bar{\sigma} \implies p_{R} \leq p(x) (1+\bar{\sigma}), \mbox{ for all } x\in \overline{B_{2R}}.$$  
	Here $p_{R}:= \fint_{B_{R}} p. $
	Let $w$ be as in Theorem \ref{main theorem}. Then by Theorem \ref{higher integrability 2}, we have $w\in HW^{1,p_{R}}\left(B_{R}\right)$.  Hence, let $v \in w+HW_{0}^{1,p_{R}}\left(B_{R}\right)$ be the unique minimizer of 
	\begin{align*}
		\inf \left\lbrace \int_{B_{R}} \frac{1}{p_{R}}\left\lvert \X v\right\rvert^{p_{R}}\ \mathrm{d}x: v \in w + HW_{0}^{1,p_{R}}\left(B_{R}\right) \right\rbrace.  
	\end{align*}
	Note that $v$ is a weak solution to 
	\begin{align}\label{equation double frozen}
		\operatorname{div}_{\mathbb{H}}\left(\left\lvert \X v \right\rvert^{p_{R}-2} \X v \right) &= 0 &&\text{ in } B_{R}
	\end{align}	
	and $v-w$ is an admissible test function. Also, 
	$w$ is a weak solution to 
	\begin{align}\label{equation frozen}
		\operatorname{div}_{\mathbb{H}}\left(\left\lvert \X w \right\rvert^{p(x) -2} \X w \right) &= 0 &&\text{ in } B_{R}. 
	\end{align}	
	\subsection{Comparison estimates}
	\begin{lemma}\label{comparison lemma 2}
		Let $w$,  $p_{R}$ and $v$ be as defined in Section \ref{basic setup for comparison} and  let 
		\begin{align}\label{def lambda1}
			\lambda_{1}&:= 1 + \left(  \fint_{B_{4R}}\left\lvert  \X w \right\rvert^{\gamma} \right)^{\frac{1}{\gamma}}. 
		\end{align}  Then we have the following estimates.
		\begin{enumerate}[(i)]
			\item \begin{align}\label{comparison lemma 2 main estimate}
				\fint_{B_{R}} \left\lvert V_{p_{R}}\left( \X w\right) - V_{p_{R}}\left( \X v\right)\right\rvert^{2} \leq 	C \mathfrak{P}_{R, \theta}^{2} \lambda_{1}^{p_{R}}.
			\end{align}
			\item If $p_{R} >2,$ we have 
			\begin{align}\label{comparison lemma 2 bar p bigger than 2}
				\fint_{B_{R}} \left\lvert  \X w - \X v\right\rvert^{p_{R}} \leq  C \mathfrak{P}_{R, \theta} ^{2} \lambda_{1}^{p_{R}}.
			\end{align}
			\item If $p_{R} \leq 2,$ we have 
			\begin{align}\label{comparison lemma 2 bar p smaller than 2}
				\fint_{B_{R}} \left\lvert  \X w - \X v\right\rvert^{p_{R}} \leq C \mathfrak{P}_{R, \theta} ^{p_{R}} \lambda_{1}^{p_{R}}.
			\end{align}
		\end{enumerate}
		Here $ C = C \left(n, \gamma_{1}, \gamma_{2}, L \right)$ is a positive constant.
	\end{lemma}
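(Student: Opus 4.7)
The plan is to prove \eqref{comparison lemma 2 main estimate} first and then deduce the $L^{p_R}$ estimates in (ii) and (iii) from it via the algebraic bounds of Lemma \ref{prop of V}. For (i), the natural move is to use $w - v \in HW^{1,p_R}_0(B_R)$ as a common test function in the weak formulations of both \eqref{equation double frozen} and \eqref{equation frozen}. Subtracting these two identities and applying the classical monotonicity inequality \eqref{monotonicity} together with \eqref{constant cv} to the constant-exponent operator acting on $v$ yields
$$c \int_{B_R} |V_{p_R}(\X w) - V_{p_R}(\X v)|^2\,\mathrm{d}x \leq \int_{B_R} \bigl||\X w|^{p_R-2} - |\X w|^{p(x)-2}\bigr|\, |\X w|\, |\X w - \X v|\,\mathrm{d}x.$$
The elementary mean-value estimate $\bigl||t|^a - |t|^b\bigr| \leq C|a-b|\log(e+t)(1+t)^{\max\{a,b\}}$, valid for $t\geq 0$ and small $|a-b|$, would then turn the right-hand side into an integral of $|p(x) - p_R|\log(e+|\X w|)(1+|\X w|)^{M(x)-1}|\X w - \X v|$, where $M(x) := \max\{p_R, p(x)\}$.

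The next step is to apply Young's inequality, splitting $|\X w - \X v|$ into $(|\X w|+|\X v|)^{(p_R-2)/2}|\X w-\X v|$ times $(|\X w|+|\X v|)^{(2-p_R)/2}$, so that a small $\varepsilon$-multiple of the $|V_{p_R}(\X w) - V_{p_R}(\X v)|^2$ integrand can be absorbed into the left-hand side. The remaining integral is bounded by H\"older's inequality with three exponents $(\theta,\alpha_1,\alpha_2)$: the $|p - p_R|$-piece is controlled in $L^\theta$ via the Poincar\'e inequality \eqref{poincareineqwithmeans}, yielding $CR\bigl(\fint_{B_R} |\X p|^\theta\bigr)^{1/\theta}$; the $(1+|\X w|+|\X v|)^{p_R + O(\omega_p(R))}$-piece is controlled in a high-integrability $L^{1+\sigma}$ norm via Theorem \ref{higher integrability 2}, producing a factor of $\lambda_1^{p_R}$ (using also the minimality of $v$); and the logarithmic piece $\log(e+|\X w|)$ is converted, via Lemma \ref{Llog beta L lemma}, into an explicit $\log(1/R)$ multiplier. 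Combining the Poincar\'e estimate with the logarithmic factor reconstructs precisely $\mathfrak{P}_{R,\theta}$ as defined in \eqref{mathfrak p}, which yields \eqref{comparison lemma 2 main estimate}.

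For (ii), when $p_R > 2$, the algebraic bound $|z_1 - z_2|^{p_R} \leq C|V_{p_R}(z_1) - V_{p_R}(z_2)|^2$, a direct consequence of \eqref{constant cv}, converts \eqref{comparison lemma 2 main estimate} immediately into \eqref{comparison lemma 2 bar p bigger than 2}. For (iii), when $p_R \leq 2$, I would invoke \eqref{v estimate p less 2}, raise it to the power $p_R$, integrate over $B_R$, and estimate the two resulting terms separately: one is controlled directly by \eqref{comparison lemma 2 main estimate}, while the other is bounded via H\"older's inequality together with the minimality estimate $\fint_{B_R} |\X v|^{p_R} \leq C\lambda_1^{p_R}$ coming from Proposition \ref{existence of minimizers for constant exponent} combined with \eqref{higher integrability estimate RHS 0}; the hypothesis $\mathfrak{P}_{R,\theta} \leq 1$ from \eqref{smallness of mathfrak a+mathfrak p} then guarantees $\mathfrak{P}_{R,\theta}^2 \leq \mathfrak{P}_{R,\theta}^{p_R}$ in this subquadratic regime. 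The main obstacle will be step (i): calibrating the triple H\"older exponent so that it is compatible with the higher-integrability exponent $1+\sigma_0$ furnished by Theorem \ref{higher integrability 2}, and ensuring that the variable-exponent mismatch $M(x) - p_R = O(\omega_p(R))$ and the logarithmic correction combine cleanly with the $R\log(1/R)$ prefactor in $\mathfrak{P}_{R,\theta}$, which is the precise reason for the smallness conditions \eqref{smallness of wp} and \eqref{smallness of mathfrak a+mathfrak p} imposed in Section \ref{basic setup for comparison}.
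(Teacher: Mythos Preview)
Your plan is essentially the paper's own argument: test both equations with $w-v$, use monotonicity to reduce to the ``exponent-mismatch'' integral, linearize via the mean-value identity, absorb a $|V_{p_R}(\X w)-V_{p_R}(\X v)|^2$ term by Young's inequality, and then combine Poincar\'e for $|p-p_R|$, Lemma~\ref{Llog beta L lemma} for the logarithm, and Theorem~\ref{higher integrability 2} for the remaining energy to assemble $\mathfrak{P}_{R,\theta}^2\lambda_1^{p_R}$; parts (ii) and (iii) then follow from (i) exactly as you outline.

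Two small points where the paper's execution differs from your sketch. First, to use $w-v$ as a test function in \eqref{equation frozen} you need $v\in HW^{1,p(\cdot)}(B_R)$, not just $HW^{1,p_R}$; the paper secures this via a global higher-integrability result for the frozen Dirichlet problem, which you should flag. Second, your ``triple H\"older with the log in its own slot'' is not quite how Lemma~\ref{Llog beta L lemma} works: that lemma needs $f$ and $\log(e+f^\delta)$ coupled in a single integral to produce the $\log(1/R)$ factor. The paper therefore uses a \emph{two}-way H\"older, isolating $|p-p_R|^t$ on one side (with exponent $1+4/\bar{\sigma}$, controlled via Poincar\'e--Sobolev by $\|\X p\|_{L^\theta}$ since $\theta^*=2(1+\sigma_0)/\sigma_0$) and keeping $(1+|\X w|)^{\tilde p+t\omega_p(R)}\log^t(\cdots)$ together on the other, to which Lemma~\ref{Llog beta L lemma} is applied. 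The paper also splits the Young-absorption step into the cases $p_R\ge 2$ and $p_R<2$ already within the proof of (i), using \eqref{v estimate p less 2} in the subquadratic case to avoid the negative weight $(|\X w|+|\X v|)^{(2-p_R)/2}$; your uniform splitting glosses over this. None of this changes the strategy, only the bookkeeping.
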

	\begin{proof}
		Clearly, $w-v \in HW^{1,p_{R}}_0(B_{R})$. On the other hand, it follows from \eqref{smallness of wp} that  $p_2(1+\bar{\sigma}/4)< p(x)(1+\bar{\sigma})$, for all $x\in B_{R}$, where $p_2= \sup_{B_{R}} p(\cdot)$. This implies $w\in HW^{1,p_2(1+\bar{\sigma})}(B_{2R})$, since $w\in HW^{1,p(\cdot)(1+\bar{\sigma})}(B_{2R})$. Thus it follows from a well-known global higher integrability result (see Thoerem 6.8 in \cite{Giusti_DCV}, Theorem 4.4 in \cite{Kinnunen_higherintegrability_metricmeasure}) that $v\in HW^{1,p(\cdot)}(B_{R})$. As a consequence we have $w-v\in HW_0^{1, p(\cdot)}(B_{R})$. Hence, plugging in $w-v$ in the weak formulations \eqref{equation frozen}, \eqref{equation double frozen}, we deduce 
		\begin{align*}
			\int_{B_{R/2}} \left\langle \left\lvert \X w \right\rvert^{p\left(x\right)-2} \X w - \left\lvert \X v \right\rvert^{p_{R/2}-2} \X v, \X w -\X v \right\rangle  = 0. 
		\end{align*}	
		Thus we have 
		\begin{align*}
			\fint_{B_{R}} &\left\lvert V_{p_{R}}\left(\X w \right) - V_{p_{R}}\left(\X v \right) \right\rvert^{2} \\
			&\leq c  \fint_{B_{R}}  \left\langle \left\lvert \X w \right\rvert^{p_{R/2}-2} \X w - \left\lvert \X v \right\rvert^{p_{R/2}-2} \X v, \X w -\X v \right\rangle \\
			&= c\fint_{B_{R}}   \left\langle  \left\lvert \X w \right\rvert^{p\left(x\right)-2} \X w -\left\lvert \X w \right\rvert^{p_{R/2}-2} \X w, \X v -\X w \right\rangle 
			\\&\leq c\fint_{B_{R}}\left\lvert \left\lvert \X w \right\rvert^{p\left(x\right)-2} \X w - \left\lvert \X w \right\rvert^{p_{R/2}-2} \X w \right\rvert \left\lvert \X w -\X v \right\rvert 
			\\&=\fint_{B_{R}}\left\lvert \left\lvert \X w \right\rvert^{p\left(x\right)-p_{R}} -1 \right\rvert \left\lvert \X w \right\rvert^{p_{R/2}-1}  \left\lvert \X w -\X v \right\rvert:= I_{1}.  
		\end{align*}
		Now, for a.e. $x \in B_{R},$ with $\left\lvert \X w \right\rvert \neq 0,$ we have 
		\begin{align*}
			\left\lvert \left\lvert \X w \right\rvert^{p\left(x\right)-p_{R}} -1 \right\rvert = \left\lvert p\left(x\right)-p_{R} \right\rvert\left\lvert \X w \right\rvert^{\theta_{x}\left( p\left(x\right)-p_{R}\right)}  \left\lvert \log \left\lvert \X w \right\rvert \right\rvert
		\end{align*}
		for some $\theta_{x} \in (0,1).$	Now if $p_{R} \geq 2,$ we have 
		\begin{align*}
			\left\lvert \X w -\X v \right\rvert \leq c \left\lvert \X w \right\rvert^{\frac{2-p_{R}}{2}}\left\lvert V_{p_{R}}\left(\X w \right) - V_{p_{R}}\left(\X v \right) \right\rvert. 
		\end{align*}
		We use the shorthand $\tilde{p} = p_{R}.$ Now using Young's inequality, we have 
		\begin{align*}
			I_{1} &\leq c \fint_{B_{R}}\left\lvert p\left(x\right)-\tilde{p} \right\rvert\left\lvert \X w \right\rvert^{\frac{\tilde{p}}{2} + \theta_{x}\left( p\left(x\right)-\tilde{p}\right)}  \left\lvert \log \left\lvert \X w \right\rvert \right\rvert \left\lvert V_{\tilde{p}}\left(\X w \right) - V_{\tilde{p}}\left(\X v \right) \right\rvert 
			\\&\begin{aligned}
				\leq  c \fint_{B_{R}}\left\lvert p\left(x\right)-\tilde{p} \right\rvert^{2}\left\lvert \X w \right\rvert^{\tilde{p} + 2\theta_{x}\left( p\left(x\right)-\tilde{p}\right)} & \left\lvert \log \left\lvert \X w \right\rvert \right\rvert^{2} \\&+ \varepsilon\fint_{B_{R}}\left\lvert V_{\tilde{p}}\left(\X w \right) - V_{\tilde{p}}\left(\X v \right) \right\rvert^{2}
			\end{aligned}
		\end{align*}
		Now if $p_{R} < 2,$ then we have 
		\begin{align*}
			I_{1} &\begin{multlined}[t]
				\stackrel{\eqref{v estimate p less 2}}{\leq} c \fint_{B_{R}}\left\lvert p\left(x\right)-\tilde{p} \right\rvert\left\lvert \X w \right\rvert^{\tilde{p} -1 + \theta_{x}\left( p\left(x\right)-\tilde{p}\right)}  \left\lvert \log \left\lvert \X w \right\rvert \right\rvert \left\lvert V_{\tilde{p}}\left(\X w \right) - V_{\tilde{p}}\left(\X v \right) \right\rvert^{\frac{2}{\tilde{p}}}  \\
				+ c\fint_{B_{R}}\left\lvert p\left(x\right)-\tilde{p} \right\rvert\left\lvert \X w \right\rvert^{\frac{\tilde{p}}{2} + \theta_{x}\left( p\left(x\right)-\tilde{p}\right)}  \left\lvert \log \left\lvert \X w \right\rvert \right\rvert \left\lvert V_{\tilde{p}}\left(\X w \right) - V_{\tilde{p}}\left(\X v \right) \right\rvert
			\end{multlined}
			\\&\begin{aligned}
				\leq  c \fint_{B_{R}}\left\lvert p\left(x\right)-\tilde{p} \right\rvert^{{\tilde{p}}^{'}}&\left\lvert \X w \right\rvert^{\tilde{p} + {\tilde{p}}^{'}\theta_{x}\left( p\left(x\right)-\tilde{p}\right)}  \left\lvert \log \left\lvert \X w \right\rvert \right\rvert^{{\tilde{p}}^{'}}\\&\begin{multlined}
					+c \fint_{B_{R}}\left\lvert p\left(x\right)-\tilde{p} \right\rvert^{2}\left\lvert \X w \right\rvert^{\tilde{p} + 2\theta_{x}\left( p\left(x\right)-\tilde{p}\right)}  \left\lvert \log \left\lvert \X w \right\rvert \right\rvert^{2} \\+ \varepsilon\fint_{B_{R/2}}\left\lvert V_{\tilde{p}}\left(\X w \right) - V_{\tilde{p}}\left(\X v \right) \right\rvert^{2}.
				\end{multlined}
			\end{aligned}
		\end{align*} 
		Now note that we have 
		\begin{align*}
			\left\lvert p\left(x\right)-\tilde{p}\right\rvert  &\leq \omega_p\left(R\right) \leq \omega_p\left(4\bar{R}\right) \intertext{ and }
			\left\lvert t\theta_x \left( p\left(x\right)-\tilde{p}\right)\right\rvert  &\leq t\omega_p\left(R\right) \leq \max \left\lbrace 2, \gamma_{1}^{'} \right\rbrace\omega_p\left(4\bar{R}\right) \leq \frac{\tilde{p}}{2},  
		\end{align*}
		for $t=2, {\tilde{p}}^{'}.$ Thus, if $\left\lvert \X w (x)\right\rvert < e,$ we deduce 
		\begin{align*}
			\left\lvert p\left(x\right)-\tilde{p} \right\rvert^{t}&\left\lvert \X w (x)\right\rvert^{\tilde{p} + t\theta_{x}\left( p\left(x\right)-\tilde{p}\right)}  \left\lvert \log \left\lvert \X w (x)\right\rvert \right\rvert^{t} \leq c
		\end{align*}
		for $t=2, {\tilde{p}}^{'}.$ Hence, we deduce 
		\begin{align*}
			\int_{B_{R}\cap \left\lbrace \left\lvert \X w (x)\right\rvert < e \right\rbrace} 	\left\lvert p\left(x\right)-\tilde{p} \right\rvert^{t}&\left\lvert \X w \right\rvert^{\tilde{p} + t\theta_{x}\left( p\left(x\right)-\tilde{p}\right)}  \left\lvert \log \left\lvert \X w \right\rvert \right\rvert^{t} \\&\leq c \left\lvert B_{R}\cap \left\lbrace \left\lvert \X w \right\rvert < e \right\rbrace\right\rvert \leq c \left\lvert B_{R}\right\rvert. 
		\end{align*}
		On the other hand, setting $E_{R} := B_{R}\cap \left\lbrace \left\lvert \X w (x)\right\rvert \geq  e \right\rbrace,$ we estimate  
		\begin{align*}
			\int_{E_{R}}& 	\left\lvert p\left(x\right)-\tilde{p} \right\rvert^{t}\left\lvert \X w \right\rvert^{\tilde{p} + t\theta_{x}\left( p\left(x\right)-\tilde{p}\right)}  \left\lvert \log \left\lvert \X w \right\rvert \right\rvert^{t} \\ &\leq 	c \int_{E_{R}} 	\left\lvert p\left(x\right)-\tilde{p} \right\rvert^{t}\left( 1 + \left\lvert \X w \right\rvert\right)^{\tilde{p} + t\omega_p\left(R\right)}  \left\lvert \log \left( e + \left( 1 + \left\lvert \X w \right\rvert \right)^{\tilde{p} + t\omega_p\left(R\right)}\right)\right\rvert^{t}.
		\end{align*}
		Now we observe that 
		\begin{align*}
			\frac{1}{\left\lvert B_{R}\right\rvert}&\int_{E_{R}} 	\left\lvert p\left(x\right)-\tilde{p} \right\rvert^{t}\left( 1 + \left\lvert \X w \right\rvert\right)^{\tilde{p} + t\omega_p\left(R\right)}  \left\lvert \log \left( e + \left( 1 + \left\lvert \X w \right\rvert \right)^{\tilde{p} + t\omega_p\left(R\right)}\right)\right\rvert^{t}  \\&\leq c \fint_{B_{R}} 	\left\lvert p\left(x\right)-\tilde{p} \right\rvert^{t}\left( 1 + \left\lvert \X w \right\rvert\right)^{\tilde{p} + t\omega_p\left(R\right)}  \left\lvert \log \left( e + \left( 1 + \left\lvert \X w \right\rvert \right)^{\tilde{p} + t\omega_p\left(R\right)}\right)\right\rvert^{t}. 
		\end{align*}
		Set $\vartheta := \omega_p (R)$. Using Holder inequality, we deduce 
		\begin{align*}
			\fint_{B_{R}}& 	\left\lvert p\left(x\right)-\tilde{p} \right\rvert^{t}\left( 1 + \left\lvert \X w \right\rvert\right)^{\tilde{p} + t\vartheta}  \left\lvert \log \left( e + \left( 1 + \left\lvert \X w \right\rvert \right)^{\tilde{p} + t\vartheta}\right)\right\rvert^{t} \\
			&\begin{multlined}
				\leq R^{t}\left(\fint_{B_{R}} \left\lvert \frac{p\left(x\right)-\tilde{p}}{R}\right\rvert^{t(1 + \frac{4}{\bar{\sigma}})} \right)^{\frac{\bar{\sigma}}{ 4+ \bar{\sigma}}} \\
				\times \left( \fint_{B_{R}}\left( 1 + \left\lvert \X w \right\rvert\right)^{\left(\tilde{p} + t\vartheta\right)\left(1 + \frac{\bar{\sigma}}{4}\right)}   \log^{t\left(1 + \frac{\bar{\sigma}}{4}\right)} \left( e + \left( 1 + \left\lvert \X w \right\rvert \right)^{\tilde{p} + t\vartheta}\right) \right)^{\frac{4}{4+\bar{\sigma}}}. 
			\end{multlined}
		\end{align*}
		Now we apply Lemma \ref{Llog beta L lemma} with the choices 
		\begin{align*}
			\begin{aligned}
				f = \left( 1 + \left\lvert \X w \right\rvert\right)^{\left( \tilde{p} + t\vartheta\right)\left(1 + \frac{\bar{\sigma}}{4}\right)}, \ \beta = t\left( 1 + \frac{\bar{\sigma}}{4}\right),\\  \sigma = \frac{1}{\left( 1 + \frac{\bar{\sigma}}{4}\right)},\  \tau = \left(Q+1\right)\left( 1 + \frac{\bar{\sigma}}{2}\right) \mbox{ and } \zeta = \frac{1+ \frac{\bar{\sigma}}{2}}{1+ \frac{\bar{\sigma}}{4}},
			\end{aligned}
		\end{align*}
		to the last integral on the right above to deduce  
		\begin{align}
			\fint_{B_{R}}	\left\lvert p\left(x\right)-\tilde{p} \right\rvert^{t}&\left( 1 + \left\lvert \X w \right\rvert\right)^{\tilde{p} + t\vartheta}  \left\lvert \log \left( e + \left( 1 + \left\lvert \X w \right\rvert \right)^{\tilde{p} + t\vartheta}\right)\right\rvert^{t} \notag \\
			&\begin{multlined}[b]
				\leq cR^{t}\log^{t} \left( \frac{1}{R}\right)\left(\fint_{B_{R}} \left\lvert \frac{p\left(x\right)-\tilde{p}}{R}\right\rvert^{t(1 + \frac{4}{\bar{\sigma}})} \right)^{\frac{\bar{\sigma}}{ 4+ \bar{\sigma}}} \\
				\times \left( 1 + R^{\left(Q+1\right)\left( 1 + \frac{\bar{\sigma}}{2}\right)}\fint_{B_{R}}\left( 1 + \left\lvert \X w \right\rvert\right)^{\left(\tilde{p} + t\vartheta\right)\left(1 + \frac{\bar{\sigma}}{4}\right)}\right)^{t} \\
				\times \left( \fint_{B_{R}}\left( 1 + \left\lvert \X w \right\rvert\right)^{\left(\tilde{p} + t\vartheta \right)\left(1 + \frac{\bar{\sigma}}{2}\right)}   \right)^{\frac{2}{2+\bar{\sigma}}}. 
			\end{multlined}
		\end{align}
		Now we have  
		\begin{align}
			R^{\left(Q+1\right)\left( 1 + \frac{\bar{\sigma}}{2}\right)}\fint_{B_{R}}\left( 1 + \left\lvert \X w \right\rvert\right)^{\left(\tilde{p} + t\vartheta\right)\left(1 + \frac{\bar{\sigma}}{4}\right)} &\leq c, \label{est p-tilde wp 1} \\
			\left( \fint_{B_{R}}\left( 1 + \left\lvert \X w \right\rvert\right)^{\left(\tilde{p} + t\vartheta\right)\left(1 + \frac{\bar{\sigma}}{2}\right)}   \right)^{\frac{2}{2+\bar{\sigma}}} &\leq c\lambda_{1}^{\tilde{p}} \label{est p-tilde wp 2}.
		\end{align}
		Indeed, using the shorthand $\varTheta: = \gamma' (1+\bar{\sigma}/4)$, we note that 
		\begin{align}\label{est p-tilde wp}
			t(1+\frac{\bar{\sigma}}{4}) \vartheta \leq \varTheta \omega_p(R) \stackrel{\eqref{smallness of wp}}{\leq} \frac{\gamma \bar{\sigma}}{4}.
		\end{align}
		On the other hand it is easy to see that $\tilde{p}(1+\frac{\bar{\sigma}}{4}) \leq p(x)(1+\frac{\bar{\sigma}}{4} + \omega_p(2R))$ for any $x\in \overline{B_R}.$ Again using \eqref{smallness of wp} we conclude that $\left(1+\varTheta\right) \omega_{p}(2R) \leq \frac{\bar{\sigma}}{4}$.
		So we estimate using \eqref{higher integrability estimate RHS 0}
		\begin{align*}
			R^{\left(Q+1\right)\left( 1 + \frac{\bar{\sigma}}{2}\right)}  &\fint_{B_{R}}\left( 1 + \left\lvert \X w \right\rvert\right)^{\left(\tilde{p} + t\vartheta\right)\left(1 + \frac{\bar{\sigma}}{4}\right)} \\
			&\leq R^{\left(Q+1\right)\left( 1 + \frac{\bar{\sigma}}{2}\right)} \fint_{B_{R}}\left( 1 + \left\lvert \X w \right\rvert\right)^{p(x)\left(1 + \frac{\bar{\sigma}}{4}+ \left(1+\varTheta \right)\omega_p(2R) \right)}\\
			&\leq c R^{\left(Q+1\right)\left( 1 + \frac{\bar{\sigma}}{2}\right)}  \left(1+ \frac{1}{R^Q} \left \lVert \X w \right \rVert _{L^\gamma\left(B_{2R}\right)}\right)^{\left(1+\frac{\bar{\sigma}}{4} +\ \left(1+\varTheta \right)\omega_p(2R)\right)}  \\
			&\leq c R^{\left(Q+1\right)\left( 1 + \frac{\bar{\sigma}}{2}\right)}  \left(\frac{1}{R^Q} K_0\right)^{\left(1+\frac{\bar{\sigma}}{4} + \left(1+\varTheta \right) \omega_p(2R)\right)} \\ 
			&\leq c R^{\left(Q+1\right)\left( 1 + \frac{\bar{\sigma}}{2}\right)}  \left(\frac{1}{R^{Q+1}} \right)^{\left(1+\frac{\bar{\sigma}}{4} + \left(1+\varTheta \right) \omega_p(2R)\right)} \leq c.
		\end{align*}
		This proves \eqref{est p-tilde wp 1}. Similarly, we can prove \eqref{est p-tilde wp 2}. Now, using \eqref{est p-tilde wp 1}, \eqref{est p-tilde wp 2} and the fact that $t \geq 2$ in either case (since $\tilde{p}^{'} \geq 2$ if $\tilde{p} \leq 2$), we arrive at \eqref{comparison lemma 2 main estimate}. 
		Now \eqref{comparison lemma 2 bar p bigger than 2} follows from the fact $p_{R}>2$, \eqref{constant cv} and \eqref{comparison lemma 2 main estimate}. On the other hand, if $\tilde{p}\leq 2,$ we have 
		\begin{align}\label{penultimate est}
			\fint_{B_{R}}& \left\lvert  \X v - \X w\right\rvert^{\tilde{p}} \notag\\
			&\leq c\fint_{B_{R}}\left( \left\lvert  \X v\right\rvert^{2} + \left\lvert  \X w\right\rvert^{2} \right)^{\frac{\tilde{p}\left(\tilde{p}-2\right)}{2}} \left\lvert  \X v - \X w\right\rvert^{\tilde{p}}\left( \left\lvert  \X v\right\rvert^{2} + \left\lvert  \X w\right\rvert^{2}\right)^{\frac{\tilde{p}\left(2- \tilde{p}\right)}{2}} \notag\\
			&\leq c\fint_{B_{R}}\left\lvert V_{\tilde{p}}\left( \X v\right) - V_{\tilde{p}}\left( \X w\right)\right\rvert^{\tilde{p}}\left( \left\lvert  \X v\right\rvert^{2} + \left\lvert  \X w\right\rvert^{2}\right)^{\frac{\tilde{p}\left(2- \tilde{p}\right)}{2}} \notag\\
			&\leq c\left( \fint_{B_{R}}\left\lvert V_{\tilde{p}}\left( \X v\right) - V_{\tilde{p}}\left( \X w\right)\right\rvert^{2}\right)^{\frac{\tilde{p}}{2}}\left( \fint_{B_{R}}\left( \left\lvert  \X v\right\rvert^{2} + \left\lvert  \X w\right\rvert^{2}\right)^{\frac{\tilde{p}}{2}}\right)^{\frac{2-\tilde{p}}{2}} \notag\\
			&\leq c\left( \fint_{B_{R}}\left\lvert V_{\tilde{p}}\left( \X v\right) - V_{\tilde{p}}\left( \X w\right)\right\rvert^{2}\right)^{\frac{\tilde{p}}{2}}\left( \fint_{B_{R}}\left( \left\lvert  \X v\right\rvert^{\tilde{p}} + \left\lvert  \X w\right\rvert^{\tilde{p}}\right)\right)^{\frac{2-\tilde{p}}{2}}.
		\end{align}
		Now observe that by using Theorem \ref{higher integrability 2}, we estimate 
		\begin{align}\label{estimate of p bar energy of w}
			\fint_{B_{R}} \left\lvert \X w  \right\rvert^{\tilde{p}}
			& \leq \left(\fint_{B_{R}} \left\lvert \X w  \right\rvert^{\tilde{p}\left(1+ \bar{\sigma}\right)}\right)^\frac{1}{1+\bar{\sigma}}  \notag \\
			& \leq c \left(1+ \fint_{B_{2R}}\left\lvert \X w  \right\rvert^{\gamma}\ \mathrm{d}x \right)^{\frac{\tilde{p}}{\gamma}} \stackrel{\eqref{def lambda1} }{\leq} c \lambda_1^{\tilde{p}}. 
		\end{align}
		
		Similarly we can estimate $\fint_{B_{R}} \left\lvert \X v  \right\rvert^{\tilde{p}}$ to conclude 
		\begin{align*}
			\fint_{B_{R}}\left( \left\lvert  \X v\right\rvert^{\tilde{p}} + \left\lvert  \X w\right\rvert^{\tilde{p}}\right) \leq c \lambda_{1}^{\tilde{p}}.
		\end{align*}
		Plugging this in \eqref{penultimate est}, we arrive at \eqref{comparison lemma 2 bar p smaller than 2}. This concludes the proof.
	\end{proof}
	
	\subsection{Comparison with bounds}\label{comparison with bounds}
	Let $\lambda \geq 1$. Fix $x_0\in \Omega$. Consider,  the radius $\bar{R}$ which is defined in Section \ref{basic setup for comparison}.
	Let $\delta \in (0,1/4)$, $s>0$ and for $j=1,2, \dots $, define the following quantities 
	\begin{align}
		r_j: &= \delta^{j-1} \frac{\bar{R}}{16}, \quad B_j:= B_{r_j}\left(x_0\right), \quad sB_j:= B_{sr_j} (x_0) \label{seq of balls} \\
		p_{j}:&= \fint_{B_{r_j}} p. \label{exponents}
	\end{align}
	Also, let $v_j\in w+ HW_0^{1, p_{r_j}}(B_j)$ solve \eqref{equation double frozen} with $R$ being replaced by $r_j$. We have the following linearized comparison.
	\begin{lemma}\label{linearized comparison-hom}
		Let  $\theta$ be as in 
		\eqref{q and theta}. Suppose for $j\geq 2$,  we have  
		\begin{equation}\label{upper bound of w}
			1 + \max \left\lbrace \left( \fint_{4B_{j}} \left \lvert \X w \right \rvert ^\gamma \right)^{\frac{1}{\gamma}}, \left( \fint_{4B_{j-1}} \left \lvert \X w \right \rvert ^\gamma \right)^{\frac{1}{\gamma}} \right\rbrace \leq \lambda 
		\end{equation}
		and for some constants $A, B \geq 1,$ the estimates 
		\begin{equation}\label{both side bounds on v}
			\frac{\lambda}{B} \leq \left\lvert \X v_{j-1} \right\rvert 
			\leq A\lambda \quad \text{ in } B_{j}
		\end{equation}
		hold. Then there exists a constant $C_1 \equiv C_{1}\left( Q, \gamma_1, \gamma_2, L, A, B, \delta \right) >0$ such that 
		\begin{align}\label{grad w -grad v pbigger2}
			\left( \fint_{B_{j}} \left\lvert \X  w - \X  v_{j} \right\rvert^{\gamma}\right)^{\frac{1}{\gamma}} \leq C_{1} \mathfrak{P}_{r_{j-1}, \theta} \lambda.
		\end{align}
	\end{lemma}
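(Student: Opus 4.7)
My plan is to split on whether $p_{r_j} \leq 2$ or $p_{r_j} > 2$; only in the latter case are the lower bounds on $|\X v_{j-1}|$ used, and these supply the ``linearization'' of the title. Two preliminary observations streamline the argument. First, $\gamma \leq 2$: if $\gamma_2 \geq 2$ then $\gamma \leq \gamma_2' \leq 2$, and if $\gamma_2 < 2$ then $\gamma \leq \gamma_1 \leq \gamma_2 < 2$. Hence Jensen's inequality freely converts $L^2$ bounds into $L^\gamma$ bounds on subsets of measure comparable to $|B_j|$. Second, the scaling $r_j = \delta r_{j-1}$, combined with the boundedness of $r \log(1/r)$ for small $r$, gives $\mathfrak{P}_{r_j,\theta} \leq C_\delta\, \mathfrak{P}_{r_{j-1},\theta}$, so we may freely replace the index $j$ by $j-1$ in excess factors at the cost of a $\delta$-dependent constant.

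In the case $p_{r_j} \leq 2$, Lemma \ref{comparison lemma 2}(iii) directly yields $\fint_{B_j} |\X w - \X v_j|^{p_{r_j}} \leq C\, \mathfrak{P}_{r_j,\theta}^{p_{r_j}} \lambda^{p_{r_j}}$; taking the $p_{r_j}$-th root and applying Jensen (since $\gamma \leq \gamma_1 \leq p_{r_j}$) finishes this case without using the $\X v_{j-1}$ hypothesis.

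The main case is $p_{r_j} > 2$, for which $\gamma_2 > 2$ and hence $\gamma \leq \gamma_2' < 2$ strictly. I decompose $B_j = G \sqcup \mathcal{B}$ with $\mathcal{B} := \{x \in B_j : |\X w(x)| < \lambda/(2B)\}$. To bound $|\mathcal{B}|$, I apply Lemma \ref{comparison lemma 2}(i) at level $j-1$ and combine the resulting $V$-function bound with the pointwise lower bound $|\X v_{j-1}| \geq \lambda/B$ on $B_j$ via the monotonicity estimate \eqref{constant cv}: this yields $\fint_{B_j} |\X w - \X v_{j-1}|^2 \leq C_{\delta,B}\, \mathfrak{P}_{r_{j-1},\theta}^2 \lambda^2$ (immediately when $p_{r_{j-1}} \geq 2$ using $|\X v_{j-1}|^{p_{r_{j-1}}-2} \gtrsim \lambda^{p_{r_{j-1}}-2}$; in the sub-case $p_{r_{j-1}} < 2$ via a secondary split on whether $|\X w| \leq C\lambda$). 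Since $|\X w - \X v_{j-1}| \geq \lambda/(2B)$ on $\mathcal{B}$, Chebyshev gives $|\mathcal{B}| \leq C\, \mathfrak{P}_{r_{j-1},\theta}^2\, |B_j|$. On $G$ one has $|\X w| + |\X v_j| \geq \lambda/(2B)$, so Lemma \ref{comparison lemma 2}(i) at level $j$ combined with \eqref{constant cv} (using $p_{r_j} > 2$) yields $\frac{1}{|B_j|}\int_G |\X w - \X v_j|^2 \leq C_B\, \mathfrak{P}_{r_j,\theta}^2 \lambda^2$; Jensen then supplies the $L^\gamma$ contribution on $G$. For $\mathcal{B}$, H\"{o}lder with $\alpha := 2/(2-\gamma) < \infty$ gives
\[
\int_\mathcal{B} |\X w - \X v_j|^\gamma \;\lesssim\; |\mathcal{B}|^{1-1/\alpha}\,\Bigl(\int_{B_j}(|\X w|+|\X v_j|)^{\gamma\alpha}\Bigr)^{1/\alpha}.
\]
The exponent $\gamma\alpha = 2\gamma/(2-\gamma)$ is finite, so iterating Theorem \ref{higher integrability 2} a bounded number of times (its Gehring-type proof is self-improving and \eqref{smallness of wp} is preserved under the iteration) produces $\|\X w\|_{L^{\gamma\alpha}(B_j)} + \|\X v_j\|_{L^{\gamma\alpha}(B_j)} \lesssim \lambda\,|B_j|^{1/(\gamma\alpha)}$. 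Because $2(1-1/\alpha) = \gamma$, this delivers $\frac{1}{|B_j|}\int_\mathcal{B} |\X w - \X v_j|^\gamma \lesssim \mathfrak{P}_{r_{j-1},\theta}^{\gamma} \lambda^\gamma$; summing with the $G$-contribution and taking $\gamma$-th roots finishes.

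The subtlest point is the sub-case $p_{r_{j-1}} < 2$ inside the bad-set measure estimate: there, the pointwise passage from the $V$-function bound to an $L^2$ bound via \eqref{constant cv} requires an \emph{upper} bound on $|\X w|+|\X v_{j-1}|$ rather than a lower one. I resolve this by a secondary level-set split on $\{|\X w| \leq C\lambda\}$; its complement has Chebyshev-small measure thanks to the $L^\gamma$ hypothesis on $\X w$ over $4B_{j-1}$ and contributes negligibly after another application of higher integrability. A secondary technical matter is guaranteeing enough higher integrability to accommodate the exponent $\gamma\alpha$, which is handled by the self-improving nature of the reverse H\"older inequality underlying Theorem \ref{higher integrability 2}.
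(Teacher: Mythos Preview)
Your argument for $p_{r_j}\le 2$ is correct and coincides with the paper's. The case $p_{r_j}>2$ contains a genuine gap.

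The problematic step is the bad-set estimate. You use H\"older with $\alpha=2/(2-\gamma)$, which requires
\[
\Bigl(\fint_{B_j}(|\X w|+|\X v_j|)^{\gamma\alpha}\Bigr)^{1/(\gamma\alpha)}\lesssim \lambda,
\qquad \gamma\alpha=\frac{2\gamma}{2-\gamma}.
\]
This exponent is not, in general, within reach of Theorem~\ref{higher integrability 2}. Gehring's lemma gives a \emph{fixed} improvement $\sigma_0\in(0,1)$: the reverse H\"older inequality it uses comes from the Caccioppoli estimate for the equation and controls $|\X w|^{p(\cdot)}$, not higher powers, so iteration does not produce arbitrarily high integrability. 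Concretely, when $p_j>2$ one has $\gamma\le\gamma_2'<2$, and $2\gamma/(2-\gamma)\le 2\gamma_2/(\gamma_2-2)$; for $\gamma_2$ slightly above $2$ (say $\gamma_2=2.1$) this exceeds $40$, while $p(x)(1+\sigma_0)\le\gamma_2(1+\sigma_0)<4.2$. Neither $\X w$ nor $\X v_j$ (which only inherits global integrability from its boundary datum $w$ on $B_j$) is available in $L^{\gamma\alpha}(B_j)$ with the required bound. The remark ``its Gehring-type proof is self-improving'' is therefore not a valid substitute for a proof here; see also Remark~\ref{Baroni error}, where the paper points out that exactly this linearization step is where an earlier published argument went wrong.

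The paper avoids any call for high integrability of $\X w$ by linearizing through $\X v_{j-1}$ (for which the pointwise bounds \eqref{both side bounds on v} are assumed) rather than through $\X w$. It inserts the weight $|\X v_{j-1}|^{p_j-2}$ in an $L^{p_j'}$ estimate for $\X w-\X v_j$ and then splits $\X v_{j-1}=(\X v_{j-1}-\X v_j)+\X v_j$. The new ingredient is a bound on $\fint_{B_j}|\X v_{j-1}-\X v_j|^{p_j}$, which the paper obtains by a three-case analysis on the relative positions of $p_{j-1}$ and $p_j$ (Cases I--III), combining the comparison estimates at scales $r_{j-1}$ and $r_j$. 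Only integrability up to $p(\cdot)(1+\bar\sigma)$, furnished directly by Theorem~\ref{higher integrability 2}, is ever used. If you want to repair your level-set approach, the bad-set contribution has to be routed through $\X v_{j-1}$ (whose $L^\infty$ bound on $B_j$ is a hypothesis) and a comparison of $\X v_{j-1}$ with $\X v_j$; this essentially reproduces the paper's Claim.
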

	\begin{remark}\label{Baroni error}
		This lemma here is similar in spirit to \cite[Lemma 3.5]{Baroni_pxStein}. However, the proof of \cite[Lemma 3.5]{Baroni_pxStein} is wrong. The point is that the last inequality in the estimate (3.33) \emph{does not} follow from the estimate (3.29). In deducing (3.29), $w$ is the solution to the constant exponent $\bar{p}_{R/4}$-Laplacian equation and in (3.33), $\tilde{w}$ solves a constant exponent $\bar{p}_{\sigma^{-1}R/4}$-Laplacian equation.  So, using (3.29), the estimate we obtain is 
		\begin{multline*}
			\fint_{B_{\sigma^{-1}R/4}} \left\lvert V_{\bar{p}_{\sigma^{-1}R/4}}\left(Du\right) - V_{\bar{p}_{\sigma^{-1}R/4}}\left(D\tilde{w}\right)\right\rvert^{2}\ \mathrm{d}x \\ \leq c \left[ \mathfrak{A}^{2}_{\sigma^{-1}R, q} + \mathfrak{B}^{2}_{\sigma^{-1}R, q}\right]\lambda^{\bar{p}_{\sigma^{-1}R/4}}. 
		\end{multline*}
		Since there is no reason why the averages 
		$$ \bar{p}_{\sigma^{-1}R/4} = \fint_{B_{\sigma^{-1}R/4}} p\left(x\right)\ \mathrm{d}x  \quad \text{ and } \quad  \bar{p}_{R/4} = \fint_{B_{R/4}}p\left(x\right)\ \mathrm{d}x  \text{ are the same,}$$ this is not the claimed estimate, which is  
		\begin{align*}
			\fint_{B_{R/4}} \left\lvert V_{\bar{p}_{R/4}}\left(Du\right) - V_{\bar{p}_{R/4}}\left(D\tilde{w}\right)\right\rvert^{2}\ \mathrm{d}x  \leq c \left[ \mathfrak{A}^{2}_{\sigma^{-1}R, q} + \mathfrak{B}^{2}_{\sigma^{-1}R, q}\right]\lambda^{\bar{p}_{R/4}}, 
		\end{align*}
		as $\bar{p} = \bar{p}_{R/4}$, as defined in the beginning of the proof of Lemma 3.5. Note that just defining $\bar{p} = \bar{p}_{\sigma^{-1}R/4}$ instead can not fix the proof, as then every estimate in the proof where $w$ is involved does not hold anymore, e.g. the estimate of the terms (II), (III), (IV) in the pages 439-440. 
	\end{remark}
	\begin{proof}
		We divide the proof in several cases depending on $p_{j-1}$ and $p_{j}$. Note that, by our choice $\gamma \leq \min\lbrace  p_{j-1}, p'_{j-1}, p_{j}, p'_{j} \rbrace$. Also, by the definitions of $\mathfrak{P}_{r_j, \theta}$ in \eqref{mathfrak p}  we have the following relations
		\begin{align}\label{estimates of j by j-1 for p}
			\mathfrak{P}_{r_j, \theta} \leq  \delta^{\frac{Q-\theta}{\theta}}\mathfrak{P}_{r_{j-1}, \theta}.
		\end{align}
		Since in the case $p_{j}\leq 2$, \eqref{grad w -grad v pbigger2} follows from \eqref{comparison lemma 2 bar p smaller than 2} and H\"{o}lder inequality, we will only focus on the case $p_{j}>2$. The cases of interest are the following.  
		\begin{align}
			&2<p_{j-1} \leq p_{j}. \label{Case I} \tag{Case I}\\
			& 2<p_{j} \leq p_{j-1}. \label{Case II} \tag{Case II}\\ 
			&p_{j-1}\leq 2 < p_{j}. \label{Case III} \tag{Case III}
		\end{align}
		Note that if $p_{j-1}$ and  $p_{j}$ satisfy the condition of \eqref{Case III}, then from the relation $p_{j}-p_{j-1} \leq \omega_p(\bar{R}) < 1/4$ we have $p_{j-1}\leq 2< p_{j}<p_{j-1}+1/4$. This implies $p_{j-1}>7/4$ and 
		$$\frac{1+p_{j}}{p_{j}} \leq  \frac{5/4+p_{j-1}}{2} <p_{j-1}.$$
		Set
		\begin{align} \label{def alpha}
			\alpha: = \frac{1}{2} \left(\frac{1}{p_{j-1}}+ \frac{p_{j}}{1+p_{j}}\right).
		\end{align}
		Then clearly $\alpha$ satisfies 
		\begin{align}\label{bound of alpha}
			\frac{1}{p_{j-1}}< \alpha< \frac{p_{j}}{p_{j}+1}.
		\end{align}
		
		First, we have the following claim. 
		\begin{claim}\label{gen p bar-hom}
			For some $C \equiv C\left( Q, \gamma_1, \gamma_2, L, A, B, \delta \right) >0,$ we have 
			\begin{align}
				\fint_{B_{j}} \left\lvert \X  v_{j-1} - \X  v_{j} \right\rvert^{p_{j}} &\leq C \mathfrak{P}_{r_{j-1}, \theta} \lambda ^{p_{j}}, \qquad \mbox{ if } \eqref{Case I} \mbox{ holds and } \label{grad v_1 -grad v_2 Case I}\\
				\fint_{B_{j}} \left\lvert \X  v_{j-1} - \X  v_{j} \right\rvert^{p_{j}} &\leq C \left(\mathfrak{P}_{r_{j-1}, \theta}\right)^{\alpha p_{j-1}} \lambda ^{p_{j}}, \qquad \mbox{ if } \eqref{Case III} \text{ holds}, \label{grad v_1 -grad v_2 Case III}. 
			\end{align}
			
		\end{claim}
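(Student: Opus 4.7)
The plan is to derive an energy identity for the difference $v_j - v_{j-1}$ by testing both frozen equations with the common admissible test function $\phi := v_j - w$, which belongs to $HW^{1, p_j}_{0}(B_j)$ and, after extension by zero to $B_{j-1}$, also lies in $HW^{1, p_{j-1}}_{0}(B_{j-1})$ (since $p_j \geq p_{j-1}$ in both Cases I and III, so $HW^{1,p_j}_0 \subset HW^{1,p_{j-1}}_0$ on the bounded set $B_{j-1}$). Testing each equation in \eqref{equation double frozen} against $\phi$, subtracting, and writing $\X\phi = \X(v_j - v_{j-1}) + \X(v_{j-1} - w)$ produces the identity
\begin{align*}
  \int_{B_j} \langle H_{p_j}(\X v_j) - H_{p_j}(\X v_{j-1}), \X v_j - \X v_{j-1}\rangle = -E_1 - E_2 - E_3,
\end{align*}
where $H_p(z) := |z|^{p-2}z$, $G := H_{p_j}(\X v_{j-1}) - H_{p_{j-1}}(\X v_{j-1})$, and $E_1 = \int_{B_j} \langle G, \X v_j - \X v_{j-1}\rangle$, $E_2 = \int_{B_j} \langle H_{p_j}(\X v_j) - H_{p_j}(\X v_{j-1}), \X v_{j-1} - \X w\rangle$, $E_3 = \int_{B_j} \langle G, \X v_{j-1} - \X w\rangle$.

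I would then bound the LHS below via monotonicity \eqref{monotonicity} and \eqref{constant cv}; since $p_j > 2$ in both cases under consideration, this yields $\text{LHS} \geq c\int_{B_j}|V_{p_j}(\X v_j) - V_{p_j}(\X v_{j-1})|^2 \geq c\int_{B_j}|\X v_j - \X v_{j-1}|^{p_j}$. To control $G$ pointwise, the mean value theorem applied to $s \mapsto |z|^{s-1}z$ combined with the two-sided bound $\lambda/B \leq |\X v_{j-1}| \leq A\lambda$ gives $|G| \leq C|p_j - p_{j-1}|(1 + |\log\lambda|)\lambda^{p_j - 1}$, and Poincar\'e's inequality for $p$ at radius $r_{j-1}$ combined with the log-H\"older continuity of $p$ (Proposition \ref{Lorenz-Zygmund-Sobolev_implies_logholder}) absorbs the logarithmic factor to yield the key pointwise bound $|G| \leq C_{A,B}\,\mathfrak{P}_{r_{j-1},\theta}\,\lambda^{p_j-1}$ throughout $B_j$. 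With this, Young's inequality applied to $E_1$ yields $|E_1| \leq \varepsilon \int|\X v_j - \X v_{j-1}|^{p_j} + C\mathfrak{P}_{r_{j-1},\theta}^{p_j'}\lambda^{p_j}|B_j|$, where $\mathfrak{P}^{p_j'} \leq \mathfrak{P}^1$ since $\mathfrak{P} < 1$ and $p_j' > 1$; the term $E_3$ is handled analogously via Lemma \ref{comparison lemma 2} at scale $r_{j-1}$ controlling $\fint|\X v_{j-1} - \X w|$. For $E_2$, splitting the monotonicity factor by Young to carve off $\int|\X v_j - \X v_{j-1}|^{p_j}$ (absorbed into the LHS) leaves a residual $\int(|\X v_j|+\lambda)^{p_j-2}|\X v_{j-1} - \X w|^2$, which a H\"older pairing against the higher integrability of $\X v_j$ from Theorem \ref{higher integrability 2} reduces to an $L^{p_j}$-norm of $|\X v_{j-1} - \X w|$ controlled by Lemma \ref{comparison lemma 2}.

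The main obstacle is Case III: the transition $p_{j-1} \leq 2 < p_j$ forces Lemma \ref{comparison lemma 2} applied at scale $r_{j-1}$ to produce only the weaker bound $\fint|\X v_{j-1} - \X w|^{p_{j-1}} \leq C \mathfrak{P}_{r_{j-1},\theta}^{p_{j-1}}\lambda^{p_{j-1}}$, and raising this to an $L^{p_j}$-norm requires interpolating against a higher integrability norm of $\X v_{j-1}$ or $\X w$. The exponent $\alpha$ in \eqref{def alpha} is precisely calibrated for this interpolation: writing $|\X v_{j-1} - \X w|^{q} = |\X v_{j-1} - \X w|^{\alpha p_{j-1}}\cdot |\X v_{j-1} - \X w|^{q - \alpha p_{j-1}}$ and applying H\"older so that the first factor is controlled by the $L^{p_{j-1}}$-comparison estimate raised to the $\alpha$-power (producing the desired $\mathfrak{P}^{\alpha p_{j-1}}$), while the remainder is controlled by the pointwise $|\X v_{j-1}| \leq A\lambda$ bound together with the higher integrability of $\X v_j$. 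The constraints $1/p_{j-1} < \alpha < p_j/(p_j+1)$ guarantee that both H\"older exponents produced in this splitting are admissible; after a Young absorption step, one arrives at exactly the exponent $\alpha p_{j-1}$ in \eqref{grad v_1 -grad v_2 Case III}. Case I is simpler: the $\mathfrak{P}^2$ comparison bound from the $p_{j-1} > 2$ branch combined with a direct H\"older--Young argument and the relation \eqref{estimates of j by j-1 for p} yields the linear exponent $\mathfrak{P}^1$ of \eqref{grad v_1 -grad v_2 Case I}.
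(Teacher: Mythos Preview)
Your energy-identity approach is genuinely different from the paper's, which is far more direct: the paper simply uses the triangle inequality
\[
\fint_{B_j}|\X v_{j-1}-\X v_j|^{p_j}\leq c\fint_{B_j}|\X w-\X v_{j-1}|^{p_j}+c\fint_{B_j}|\X w-\X v_j|^{p_j},
\]
controls the second term by Lemma~\ref{comparison lemma 2}(ii) at scale $r_j$, and handles the first by writing $|\X w-\X v_{j-1}|^{p_j}\leq c|\X w-\X v_{j-1}|^{p_{j-1}/2}(|\X w|+|\X v_{j-1}|)^{p_j-p_{j-1}/2}$ (respectively $|\X w-\X v_{j-1}|^{\alpha p_{j-1}}(\ldots)^{p_j-\alpha p_{j-1}}$ in Case~III), then pairing the first factor via H\"older with Lemma~\ref{comparison lemma 2} at scale $r_{j-1}$ against the pointwise bound $|\X v_{j-1}|\leq A\lambda$ and the higher integrability of $\X w$. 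No PDE testing is needed for the Claim itself.

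Your route has a genuine gap in the handling of $E_2$ in Case~I. After absorption, your residual is $\int_{B_j}(|\X v_j|+\lambda)^{p_j-2}|\X v_{j-1}-\X w|^2$; since $|\X v_j|$ has no pointwise bound on $B_j$, the only way to separate it is H\"older with exponents $p_j/(p_j-2)$ and $p_j/2$, which produces $\lambda^{p_j-2}\bigl(\fint_{B_j}|\X v_{j-1}-\X w|^{p_j}\bigr)^{2/p_j}$. But Lemma~\ref{comparison lemma 2} gives only an $L^{p_{j-1}}$ bound, not $L^{p_j}$. To upgrade to $L^{p_j}$ you are forced to carry out exactly the interpolation the paper does in \eqref{estimate for caseI}, which yields $\fint_{B_j}|\X v_{j-1}-\X w|^{p_j}\leq C\mathfrak{P}_{r_{j-1},\theta}\lambda^{p_j}$. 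Feeding this back, your $E_2$ residual is then $\leq C\mathfrak{P}_{r_{j-1},\theta}^{2/p_j}\lambda^{p_j}$, and since $2/p_j<1$ for $p_j>2$ this is \emph{strictly weaker} than the claimed $\mathfrak{P}^{1}$ in \eqref{grad v_1 -grad v_2 Case I}. Moreover, once you have the interpolation estimate $\fint_{B_j}|\X w-\X v_{j-1}|^{p_j}\leq C\mathfrak{P}\lambda^{p_j}$ in hand, the Claim follows immediately from the triangle inequality above, so the energy identity, the pointwise bound on $G$, and the terms $E_1,E_3$ are all superfluous. The key content of the Claim is precisely that interpolation step, not a comparison between $v_j$ and $v_{j-1}$ via monotonicity.
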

		\emph{Proof of Claim \ref{gen p bar-hom}:} Clearly,
		\begin{align}\label{xvj-xvj-1 gen}
			\fint_{B_{j}} \lvert \X  v_{j-1} - \X  v_{j}\rvert^{p_{j}}  \leq c\fint_{B_{j}}\lvert \X w - \X v_{j-1}\rvert^{p_{j}} + c \fint_{B_{j}}\lvert \X w  - \X v_{j}\rvert^{p_{j}}. 
		\end{align}
		It is enough to estimate the first term. The rest follows by  \eqref{comparison lemma 2 bar p bigger than 2} and the fact that $1< \alpha p_{j-1}<2$.  To prove \eqref{grad v_1 -grad v_2 Case I}, we deduce
		\begin{align}\label{estimate for caseI}
			&\fint_{B_{j}} \left\lvert \X  w - \X  v_{j-1} \right\rvert^{p_{j}} \notag\\ 
			&\leq  c\fint_{B_{j}} \left\lvert \X  w - \X  v_{j-1} \right\rvert^{p_{j-1}/2} \left( \lvert\X  w\rvert + \lvert \X  v_{j-1} \rvert\right)^{p_{j}-p_{j-1}/2}  \notag\\
			&\stackrel{\eqref{both side bounds on v}}{\leq} c\delta^{-Q/2}  \left( \fint_{B_{j-1}} \left\lvert \X w - \X  v_{j-1} \right\rvert^{p_{j-1}}\right)^{\frac{1}{2}} \left( \lambda^{2p_{j}-p_{j-1}}+ \fint_{B_{j}} \left\lvert \X w \right\rvert^{p_{j}+ p_{j}-p_{j-1}}\right)^{\frac{1}{2}} \notag\\ 
			& \stackrel{\eqref{comparison lemma 2 bar p bigger than 2}}{\leq} c \mathfrak{P}_{r_{j-1},\theta} \lambda^{p_{j-1}/2} \left( \lambda^{2p_{j}-p_{j-1}}+ \fint_{B_{j}} \left(1+ \left\lvert \X w \right\rvert\right)^{p_{j}\left( 1+\omega_p(\bar{R})\right)}\right)^{\frac{1}{2}} \notag\\
			& \stackrel{\eqref{higher integrability estimate RHS 0}}{\leq} c \mathfrak{P}_{r_{j-1}1,\theta} \lambda^{p_{j-1}/2} \left( \lambda^{2p_{j}-p_{j-1}}+ \lambda^{p_{j}\left(1+\omega_p(\bar{R})\right)}\right)^{\frac{1}{2}}.
		\end{align}
		For \eqref{grad v_1 -grad v_2 Case III}, we deduce 
		\begin{align}\label{estimate for caseIII}
			& \fint_{B_{j}} \left\lvert \X  w - \X  v_{j-1} \right\rvert^{p_{j}} \notag\\ 
			&\leq  c\fint_{B_{j}} \left\lvert \X  w - \X  v_{j-1} \right\rvert^{\alpha p_{j-1}} \left( \lvert\X  w\rvert + \lvert \X  v_{j-1} \rvert\right)^{p_{j}-\alpha p_{j-1}}  \notag\\
			&\begin{aligned}
				\stackrel{\eqref{both side bounds on v}}{\leq} c\delta^{-\alpha Q}  &\left( \fint_{B_{j-1}} \left\lvert \X w - \X  v_{j-1} \right\rvert^{p_{j-1}}\right)^{\alpha}\\ &\qquad \qquad \left( \lambda^{p_{j}-\alpha p_{j-1}}+ \left(\fint_{B_{j}} \left\lvert \X w \right\rvert^{(p_{j}+ \frac{\alpha}{1-\alpha} (p_{j}-p_{j-1})}\right)^{1-\alpha}\right)
			\end{aligned} \notag\\ 
			& \stackrel{\eqref{bound of alpha}, \eqref{comparison lemma 2 bar p smaller than 2}}{\leq} c \left(\mathfrak{P}_{r_{j-1},\theta} \lambda\right)^{\alpha p_{j-1}} \left( \lambda^{p_{j}-\alpha p_{j-1}}+ \left(\fint_{B_{j}} \left(1+ \left\lvert \X w \right\rvert\right)^{p_{j}\left(1+\omega_p(\bar{R})\right)}\right)^{1-\alpha}\right) \notag\\
			& \stackrel{\eqref{higher integrability estimate RHS 0}}{\leq} c \mathfrak{P}^{\alpha p_{j-1}}_{r_{j-1},\theta} \lambda^{\alpha p_{j-1}} \left( \lambda^{p_{j}-\alpha p_{j-1}}+ \lambda^{p_{j}(1-\alpha)\left(1+\omega_p(\bar{R})\right)}\right).
		\end{align}
		Since $\lambda \geq 1,$ $p_{j-1}\leq p_{j}$ and $\lambda^{\omega_p(\bar{R})} \leq C(Q, \gamma_1, \gamma_2, L, A, B, \delta) $ by the choice of radius,  estimate \eqref{estimate for caseI} and \eqref{estimate for caseIII} implies \eqref{grad v_1 -grad v_2 Case I} and \eqref{grad v_1 -grad v_2 Case III}, respectively. This proves the claim.\smallskip

		\noindent Next,  we estimate 
		\begin{align}\label{est xw-xv2}
			&\left(\fint_{B_{j}} \lvert \X  w - \X  v_{j}\rvert^{p'_j}\right)^{\frac{1}{p'_j}} \notag \\
			&\qquad \stackrel{\eqref{both side bounds on v}}{\leq} c\lambda^{2-p_j} \left(\fint_{B_{j}} \left\lvert \X v_{j-1} \right\rvert^{p'_j(p_j-2)} \left\lvert \X  w - \X  v_{j}\right\rvert^{p'_j}\right)^{\frac{1}{p'_j}} \notag \\ 
			& \qquad \leq c\lambda^{2-p_j} \left(\fint_{B_{j}} \left\lvert \X v_{j-1} -\X v_j \right\rvert^{p'_j(p_j-2)} \left\lvert \X  w - \X  v_{j}\right\rvert^{p'_j}\right)^{\frac{1}{p'_j}} \notag \\
			& \qquad \qquad  + c\lambda^{2-p_j} \left(\fint_{B_{j}} \left\lvert \X v_j \right\rvert^{p'_j(p_j-2)} \left\lvert \X  w - \X  v_{j}\right\rvert^{p'_j}\right)^{\frac{1}{p'_j}} =: I_1+I_2 
		\end{align}
		Now we deduce estimate \eqref{grad w -grad v pbigger2}. We first consider \eqref{Case I}. We have 
		\begin{align}\label{est of I1 for Case I}
			I_{1} &\leq c \lambda^{2-p_j}
			\left( \fint_{B_{j}} \left\lvert \X v_{j-1} - \X v_{j} \right\rvert^{p_j}\right)^{\frac{p_j-2}{p_j}}
			\left( \fint_{B_{j}} \left\lvert \X  w - \X  v_{j} \right\rvert^{p_j}\right)^{\frac{1}{p_j}} \notag \\
			&\stackrel{ \eqref{grad v_1 -grad v_2 Case I}, \eqref{comparison lemma 2 bar p bigger than 2}}{\leq} c\lambda^{2-p_j} \left( \mathfrak{P}_{r_{j-1}, \theta} \lambda^{p_j}\right)^{\frac{p_j-2}{p_j}} \left( \mathfrak{P}^2_{r_{j-1}, \theta} \lambda^{p_j}\right)^{\frac{1}{p_j}} \leq c \mathfrak{P}_{r_{j-1}, \theta} \lambda.
		\end{align}	
		Also, we estimate 
		\begin{align}\label{est of I2 for Case I}
			I_2 & = c\lambda^{2-p_j} \left(\fint_{B_{j}} \left\lvert \X v_{j} \right\rvert^{\left(p_j-2\right)p'_j/2}\left\lvert \X  w - \X  v_{j} \right\rvert^{p'_j} \left\lvert \X v_{j} \right\rvert^{\left(p_j-2\right)p'_j/2}\right)^{\frac{1}{p'_j}} \notag\\
			&\stackrel{\text{H\"{o}lder}}{\leq} c\lambda^{2-p_j}
			\left( \fint_{B_{j}} \left\lvert \X v_{j} \right\rvert^{p_j-2}\left\lvert \X  w - \X  v_{j} \right\rvert^{2} \right)^{\frac{1}{2}}
			\left( \fint_{B_{j}} \left\lvert \X v_{j} \right\rvert^{p_j}\right)^{\frac{p_j-2}{2p_j}}\notag \\
			&\stackrel{\eqref{both side bounds on v}}{\leq} c\lambda^{\frac{2-p_j}{2}} 
			\left( \fint_{B_{j}} \left( \left\lvert \X w \right\rvert + \left\lvert \X v_{j} \right\rvert \right)^{p_j-2}\left\lvert \X  w - \X  v_{j} \right\rvert^{2} \right)^{\frac{1}{2}}\notag \\
			&\leq c\lambda^{\frac{2-p_j}{2}} 
			\left(\fint_{B_j}\left\lvert V_{p_j}\left( \X w\right) - V_{p_j}\left( \X v_j\right)\right\rvert^{2}\right)^{\frac{1}{2}} \stackrel{\eqref{comparison lemma 2 main estimate}}{\leq} C \mathfrak{P}_{r_{j-1}, \theta} \lambda.
		\end{align}
		Plugging in \eqref{est of I1 for Case I} and \eqref{est of I2 for Case I} in \eqref{est xw-xv2} and using H\"older's inequality we arrive at \eqref{grad w -grad v pbigger2} for \eqref{Case I}. Next we derive \eqref{grad w -grad v pbigger2} for \eqref{Case II}. We have
		\begin{align}\label{est of I1 for Case II}
			I_{1} &\leq c \lambda^{2-p_j}
			\left( \fint_{B_{j}} \left\lvert \X v_{j-1} - \X v_{j} \right\rvert^{p_j}\right)^{\frac{p_j-2}{p_j}}
			\left( \fint_{B_{j}} \left\lvert \X  w - \X  v_{j} \right\rvert^{p_j}\right)^{\frac{1}{p_j}} \notag \\
			& \leq c \lambda^{2-p_j}
			\left( \fint_{B_{j}} \left\lvert \X w - \X v_{j-1} \right\rvert^{p_j}\right)^{\frac{p_j-2}{p_j}}
			\left( \fint_{B_{j}} \left\lvert \X  w - \X  v_{j} \right\rvert^{p_j}\right)^{\frac{1}{p_j}} \notag \\
			& \hspace{5cm}+ c \lambda^{2-p_j}  \left( \fint_{B_{j}} \left\lvert \X  w - \X  v_{j} \right\rvert^{p_j}\right)^{\frac{p_j-1}{p_j}} \notag \\
			&\stackrel{ \eqref{comparison lemma 2 bar p bigger than 2}}{\leq} c\lambda^{3-p_j} \left( \mathfrak{P}_{r_{j-1}, \theta}\right)^{\frac{2}{p_j}} \left( \fint_{B_{j}} \left\lvert \X w - \X v_{j-1} \right\rvert^{p_j}\right)^{\frac{p_j-2}{p_j}} + c \lambda \left( \mathfrak{P}_{r_{j-1}, \theta}\right)^{\frac{2}{p'_j}} .
		\end{align}
		Now we use the relation $2<p_j\leq p_{j-1}$ to estimate 
		\begin{align*}
			\fint_{B_{j}} &\left\lvert \X w - \X v_{j-1} \right\rvert^{p_j} \\
			&\leq c  \fint_{B_{j}} \left\lvert \X w - \X v_{j-1} \right\rvert^{2} \left(\lvert \X w\rvert+ \lvert \X v_{j-1} \rvert \right)^{p_j-2} \\ 
			& = 	c  \fint_{B_{j}} \left\lvert \X w - \X v_{j-1} \right\rvert^{2} \left(\lvert \X w\rvert+ \lvert \X v_{j-1} \rvert \right)^{p_{j-1}-2}  \left(\lvert \X w\rvert+ \lvert \X v_{j-1} \rvert \right)^{p_j-p_{j-1}} \\
			& \stackrel{\eqref{both side bounds on v}}{\leq} c \lambda^{p_j-p_{j-1}}	 \fint_{B_{j}} \left\lvert \X w - \X v_{j-1} \right\rvert^{2} \left(\lvert \X w\rvert+ \lvert \X v_{j-1} \rvert \right)^{p_{j-1}-2} \\
			& \leq c \lambda^{p_{j}-p_{j-1}}	 \fint_{B_{1}} \left\lvert V_{p_{j-1}}\left(\X w\right) - V_{p_{j-1}}\left(\X v_{j-1}\right) \right\rvert^{2} \stackrel{\eqref{comparison lemma 2 main estimate}}{\leq} c \mathfrak{P}_{r_{j-1},\theta}^2 \lambda^{p_j}.  
		\end{align*}
		Combing this estimate with \eqref{est of I1 for Case II} we derive 
		\begin{align}\label{final est of I1 for Case II}
			I_1 \leq  c \lambda \left( \mathfrak{P}_{r_{j-1},\theta}\right)^{\frac{2}{p'_j}} \leq c \lambda  \mathfrak{P}_{r_{j-1},\theta}. 
		\end{align}
		The estimation of $I_2$ for \eqref{Case II} is identical to the derivation \eqref{est of I2 for Case I}. Consequently, we establish \eqref{grad w -grad v pbigger2} by combining this two estimates with \eqref{est xw-xv2} and applying of H\"older's inequality. 
		
		Next we move on to prove \eqref{grad w -grad v pbigger2} under the assumption of \eqref{Case III}. Again in this case the estimation of $I_2$ will remain the same. We only estimate $I_1$.  We have 
		\begin{align}\label{est of I1 for Case III}
			I_{1} &\leq c \lambda^{2-p_j}
			\left( \fint_{B_{j}} \left\lvert \X v_{j-1} - \X v_{j} \right\rvert^{p_j}\right)^{\frac{p_j-2}{p_j}}
			\left( \fint_{B_{j}} \left\lvert \X  w - \X  v_{j} \right\rvert^{p_j}\right)^{\frac{1}{p_j}} \notag \\
			&\stackrel{ \eqref{grad v_1 -grad v_2 Case III}, \eqref{comparison lemma 2 bar p bigger than 2}}{\leq} c\lambda^{2-p_j} \left( \mathfrak{P}^{\alpha p_{j-1}}_{r_{j-1}, \theta} \lambda^{p_j}\right)^{\frac{p_j-2}{p_j}} \left( \mathfrak{P}^2_{r_{j-1}, \theta} \lambda^{p_j}\right)^{\frac{1}{p_j}} \leq c \mathfrak{P}_{r_{j-1}, \theta} \lambda.
		\end{align}	
		To derive the last estimate we used the fact $\alpha p_{j-1}>1$ from \eqref{bound of alpha} and the following computation. 
		\begin{align*}
			\alpha p_{j-1} \frac{p_j-2}{p_j}+ \frac{2}{p_j} - 1 = \frac{(\alpha p_{j-1} -1)( p_j-2)}{p_j}>0. 
		\end{align*}
		This completes the proof.  
	\end{proof}
	\subsection{Pointwise bound}
	We set $\texttt{data} = Q, \gamma_{1}, \gamma_{2}, \left\lVert p\right\rVert_{L^{1}\left( \Omega\right)} + \left\lVert \X p \right\rVert_{L^{\left(Q,1\right)}\log L\left( \Omega; \mathbb{R}^{2n}\right)}.$ 
	\begin{theorem}\label{pointwise bound thm} Let $w$ and $p$ be as in Theorem \ref{main theorem}. Then $\X w$ is locally bounded in $\Omega$. Furthermore, there exists a constant $c_{3} = c_{3}\left( \rm{\texttt{data}}  \right) \geq 1$ and a positive radius $$R_{1} = R_{1}\left( \texttt{data}, \left\lVert \left\lvert \X w \right\rvert^{p\left(\cdot\right)}\right\rVert_{L^{1}\left( \Omega\right)} \right) >0$$ such that if $0 < R \leq R_{1},$ then we have 
		\begin{align}\label{sup bound pointwise}
			\left\lvert \X w \left(x_{0}\right) \right\rvert \leq c_{3}\left( 1 + \left( \fint_{B\left(x_{0}, 16 R\right)}\left\lvert \X w \right\rvert^{\gamma} \right)^{\frac{1}{\gamma}} \right),  
		\end{align}
		whenever $B_{16R}(x_{0}) \subset \subset \Omega$ and $x_{0}$ is a Lebesgue point of $\X w.$
	\end{theorem}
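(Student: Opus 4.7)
The plan is to fix a Lebesgue point $x_0 \in \Omega$ with $B_{16R}(x_0) \subset \Omega$ for a sufficiently small $R \leq \bar{R}/16$ (with $\bar R$ as in Section \ref{basic setup for comparison}) and to run an induction-on-scales argument. Introduce the geometric sequence of concentric balls $B_j := B_{r_j}(x_0)$ with $r_j := \delta^{j-1} R$ for a small $\delta \in (0, 1/4)$ to be fixed, and the corresponding frozen-exponent comparison functions $v_j$ on $B_j$ from Section \ref{basic setup for comparison}. Set
\begin{equation*}
\lambda_j := 1 + \left( \fint_{4B_j} \left\lvert \X w \right\rvert^{\gamma}\right)^{1/\gamma}.
\end{equation*}
The goal is to show, by induction on $j$, that $\lambda_j \leq M \lambda_1$ for all $j$, with $M$ depending only on $\texttt{data}$; Lebesgue differentiation at $x_0$ then gives $\left\lvert \X w(x_0)\right\rvert \leq \liminf_j \fint_{B_j} \left\lvert \X w\right\rvert \leq M \lambda_1$, which is \eqref{sup bound pointwise}.

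For the inductive step, the starting point is the decomposition
\begin{equation*}
\lambda_j \leq c \sup_{4B_j} \left\lvert \X v_{j-1}\right\rvert + c \delta^{-Q/\gamma} \left( \fint_{B_{j-1}} \left\lvert \X w - \X v_{j-1} \right\rvert^{\gamma}\right)^{1/\gamma} + 1 .
\end{equation*}
By the $L^{\infty}$ estimate \eqref{sup estimate constant homogeneous} applied to $v_{j-1}$, the first term is bounded by $c\, \fint_{B_{j-1}} \lvert \X v_{j-1}\rvert \leq c\lambda_{j-1}$ using Lemma \ref{comparison lemma 2} and the induction hypothesis. However, when $p_{j-1} > 2$, the unsharp comparison in Lemma \ref{comparison lemma 2} yields a comparison error of order $\mathfrak{P}_{r_{j-1},\theta}^{2/p_{j-1}}\lambda_{j-1}$, which is not summable in $j$. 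The cure is a dichotomy at every scale. In the subcritical alternative $\sup_{4B_j} \lvert \X v_{j-1}\rvert \leq \lambda_{j-1}/B$ for a sufficiently large $B = B(\texttt{data})$, the first term is at most $\lambda_{j-1}/B$ and, once $\bar R$ is chosen small via \eqref{smallness of mathfrak a+mathfrak p} to absorb the $\mathfrak{P}^{\alpha}$ correction, one obtains a strict contraction of the form $\lambda_j \leq \tfrac{1}{2}\lambda_{j-1} + c_{\ast}$. In the non-degenerate alternative $\sup_{4B_j} \lvert \X v_{j-1}\rvert > \lambda_{j-1}/B$, the H\"{o}lder oscillation estimate \eqref{oscillation estimate constant homogeneous}, with $\delta$ chosen small relative to the H\"{o}lder exponent $\beta$ there, promotes this sup bound into a two-sided pointwise bound $\lambda_{j-1}/(2B) \leq \lvert \X v_{j-1}\rvert \leq A\lambda_{j-1}$ throughout $B_j$. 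This verifies hypothesis \eqref{both side bounds on v} of Lemma \ref{linearized comparison-hom} and delivers the sharp linear-in-$\mathfrak{P}$ estimate $\left( \fint_{B_j} \lvert \X w - \X v_j\rvert^{\gamma}\right)^{1/\gamma} \leq C_1\, \mathfrak{P}_{r_{j-1},\theta}\lambda_{j-1}$, which leads to the purely multiplicative recursion $\lambda_j \leq (1 + c\, \mathfrak{P}_{r_{j-1},\theta})\lambda_{j-1}$ in this branch.

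Assembling the two alternatives produces, at every scale, a recursion that is either strictly contractive or of the form $\lambda_j \leq (1 + c\,\mathfrak{P}_{r_{j-1},\theta})\lambda_{j-1}$. Iterating this and invoking the summability $\sum_j \mathfrak{P}_{r_j,\theta} \leq c\, \lVert \X p \rVert_{L^{(Q,1)}\log L(B_{2R})}$ from \eqref{estimate on infinite sum Lorentz Zygmund} (with $\beta = 1$), together with the convergence of the infinite product $\prod_j (1 + c\,\mathfrak{P}_{r_{j-1},\theta}) \leq \exp\bigl(c\sum_j \mathfrak{P}_{r_j,\theta}\bigr)$, one extracts a uniform bound $\lambda_j \leq M\lambda_1$ on all scales. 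The conclusion \eqref{sup bound pointwise} then follows by Lebesgue differentiation at $x_0$.

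The main obstacle will be the careful orchestration of parameters so that the induction actually closes. Specifically, $\delta$ must first be fixed small relative to the H\"{o}lder exponent $\beta$ from \eqref{oscillation estimate constant homogeneous} so that the oscillation estimate can upgrade the sup bound into a genuine two-sided bound; the threshold $B$ and the constants $A, B$ of Lemma \ref{linearized comparison-hom} must be fixed compatibly so that the subcritical alternative is a strict contraction; and only then may $\bar R$ be further reduced so that the $\mathfrak{P}^{2/p_{j-1}}$ corrections from Lemma \ref{comparison lemma 2} appearing in the subcritical branch are absorbed. A secondary subtlety is that the non-degenerate branch invokes Lemma \ref{linearized comparison-hom}, whose hypothesis \eqref{upper bound of w} with $\lambda = A\lambda_{j-1}$ is itself the induction hypothesis, so the whole argument must be set up as a single self-consistent induction rather than a sequence of independent estimates.
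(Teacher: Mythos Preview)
Your dichotomy-based plan is natural, but the non-degenerate branch does not close: the recursion $\lambda_j \leq (1 + c\,\mathfrak{P}_{r_{j-1},\theta})\lambda_{j-1}$ cannot be extracted from your decomposition. Indeed, your decomposition reads
\begin{equation*}
\lambda_j \;\leq\; \sup_{4B_j}\left\lvert \X v_{j-1}\right\rvert \;+\; c\,\delta^{-Q/\gamma}\Bigl(\fint_{B_{j-1}}\left\lvert \X w - \X v_{j-1}\right\rvert^{\gamma}\Bigr)^{1/\gamma} \;+\; 1,
\end{equation*}
and even granting the sharp linearized comparison for the second term, the first term is only controlled via the sup estimate \eqref{sup estimate constant homogeneous} by $A\lambda_{j-1}$ with $A = c_1 c_h > 1$. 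This yields $\lambda_j \leq (A + c\,\mathfrak{P}_{r_{j-1},\theta})\lambda_{j-1}$, an expansion; iterating through a stretch of consecutive non-degenerate scales produces a factor $A^{\#\text{scales}}$ that is unbounded. No smallness of $\mathfrak{P}$ repairs this, and the contractive subcritical branch is not guaranteed to occur often enough to compensate (if $\lvert \X w(x_0)\rvert > 0$, one is eventually non-degenerate at every scale).

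What is missing is the excess-to-excess decay of Theorem \ref{MallickSil estimate} and, correspondingly, the decoupling of the composite quantity $\lambda_j$ into a mean part $a_j := \lvert(\X w)_{B_j}\rvert$ and an oscillation part $E_j := (\fint_{B_j}\lvert \X w - (\X w)_{B_j}\rvert^{\gamma})^{1/\gamma}$. The paper proves $a_j + E_j \leq \lambda$ for a single fixed $\lambda$ by induction: Theorem \ref{MallickSil estimate} applied to $v_j$, combined with Lemma \ref{linearized comparison-hom}, gives the genuine contraction $E_{j+1} \leq \tfrac{1}{2}E_j + C\,\mathfrak{P}_{r_{j-1},\theta}\lambda$, hence $\sum_j E_j \leq 2E_2 + C\lambda\sum_j \mathfrak{P}_{r_{j-1},\theta}$; the means satisfy $\lvert a_{j+1} - a_j\rvert \leq \delta^{-Q}E_j$, and telescoping yields $a_j \leq a_2 + \delta^{-Q}\sum_k E_k$. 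The constant $A$ from the sup estimate is used only to verify the hypotheses of Lemma \ref{linearized comparison-hom} and is never iterated. A secondary issue: invoking Lemma \ref{linearized comparison-hom} with $\lambda = \lambda_{j-1}$ (or $A\lambda_{j-1}$) needs its hypothesis \eqref{upper bound of w}, i.e.\ $\lambda_j \leq \lambda$, which is precisely what you are proving; the paper avoids this circularity by working with a single global $\lambda = H_1\lambda_1$ so that \eqref{upper bound of w} is just the induction hypothesis at earlier scales.
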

	\begin{proof}
		Fix $x_{0} \in \Omega.$ In view of \eqref{oscillation estimate constant homogeneous} and \eqref{excess decay}, we can choose $\delta \in (0,1/4)$ small enough such that we have 
		\begin{align}
			\max \left\lbrace c_{2}\delta^{\beta}, 2C_{\gamma}\delta^{\beta}  \right\rbrace \leq \frac{1}{4^{Q+4}}. 
		\end{align} Also, in view of \eqref{sup estimate constant homogeneous} and \eqref{higher integrability estimate RHS 0}, we set $A = 1 + c_{1}c_{h}$ and set $B=2A.$ Now, we use this choice of $A$ and $B$ in Lemma \ref{linearized comparison-hom} to determine the constant $C_{1}.$ Let $\bar{R}$ is the radius defined in Section \ref{basic setup for comparison}. Finally, we choose a radius $0 < R_{1} < \bar{R}/16$ small enough such that we have 
		\begin{align}\label{smallness of radius for sum p to be small}
			\sum\limits_{j=1}^{\infty} \mathfrak{P}_{r_{j-1}, \theta} \leq \frac{\delta^{4Q}}{4C_{1}10^{2Q+4}}, 
		\end{align} for all $0 < R < R_{1},$ where $r_{j} = \delta^{j-1}R$ and  $B_{r_{j}}$ and $p_{j}$ are defined by \eqref{seq of balls} and \eqref{exponents}. Set 
		\begin{align}
			\lambda := H_{1} \left( 1 + \left( \fint_{B\left(x_{0}, 16 R\right)}\left\lvert \X w \right\rvert^{\gamma} \right)^{\frac{1}{\gamma}} \right), 
		\end{align} where $H_{1} = 10^{10} 16^{4Q}/ \delta^{2Q},$ is a large constant. For notational ease, for any nonnegative integer $j$, we denote 
		\begin{align}\label{notation for excess and average of u}
			a_j:= \lvert (\X w)_{B_j} \rvert \qquad \text{ and } \qquad E_j:=  \left(  \fint_{B_{j}} \left\lvert \X w- (\X w)_{B_j} \right\rvert^{\gamma} \right)^\frac{1}{\gamma}.\end{align}
		For $j \geq 1$ integer, we also define $v_{j} \in w + HW_{0}^{1,p_{j}}\left(B_{j}\right)$ to be the unique minimizer of 
		\begin{align*}
			\inf \left\lbrace \int_{B_{j}} \frac{1}{p_{j}}\left\lvert \X v\right\rvert^{p_{j}}\ \mathrm{d}x: v \in w + HW_{0}^{1,p_{j}}\left(B_{j}\right) \right\rbrace.
		\end{align*} Thus, for every $j\geq1,$ $v_{j}$ solve \eqref{equation double frozen} with $R$ being replaced by $r_j$
		Now we shall prove that we have 
		\begin{align}\label{main control of composites}
			a_{j} + E_{j} \leq \lambda \qquad \text{ for all } j \geqslant 1. 
		\end{align}
		Clearly, \eqref{main control of composites} implies \eqref{sup bound pointwise}. Indeed, when $x_0$ is a Lebesgue point of $\X w$, we have \begin{equation*}
			\lvert \Xu(x_0) \rvert = \lim_{j \to \infty } a_{j} \leq  \lambda.   
		\end{equation*}
		Also, by our choice of $H_{1},$ \eqref{main control of composites} clearly holds for $j=1,2 $ and indeed, we have 
		\begin{align}\label{initial bound}
			a_{j} + E_{j} \leq \frac{\lambda \delta^{(j-1)Q}}{1000} \mbox{ for } j=1,2.
		\end{align} So we just need to show that if \eqref{main control of composites} holds for all $1 \leq j \leq i,$ then it also holds for $j=i+1.$ To show this, we begin by noting that if \eqref{main control of composites} holds for all $ 1 \leq j \leq  i,$ then we have 
		\begin{align}\label{Ind j}
			1 + 	\max \left\lbrace \left ( \fint_{4B_{j-1}} \lvert \X w \rvert^{\gamma} \right)^\frac{1}{\gamma},  \left ( \fint_{4B_{j}} \lvert \X w \rvert^{\gamma} \right)^\frac{1}{\gamma}\right\rbrace \leq \lambda \qquad \text{ for all } 2 \leq j \leq i.  
		\end{align}
		Now, by our choice of $A$ and $\lambda$ and minimality of all $v_{j}$, \eqref{Ind j} implies 
		\begin{align}
			\sup\limits_{2B_{j}} \left\lvert \X v_{j-1} \right\rvert &\stackrel{\eqref{sup estimate constant homogeneous}}{\leq} c_{1} \fint_{B_{j-1}} \left\lvert \X v_{j-1}\right\rvert \notag \\&\leq c_{1} \left( \fint_{B_{j-1}} \left\lvert \X v_{j-1}\right\rvert^{p_{j-1}} \right)^{\frac{1}{p_{j-1}}} \notag \\&\leq c_{1} \left( \fint_{B_{j-1}} \left\lvert \X w\right\rvert^{p_{j-1}} \right)^{\frac{1}{p_{j-1}}} \stackrel{\eqref{higher integrability estimate RHS 0}}{\leq} c_{1}c_{h} \left[ 1 + \left ( \fint_{4B_{j}} \lvert \X w \rvert^{\gamma} \right)^\frac{1}{\gamma}\right] \leq A\lambda. \label{pointwise sup bounds j}
		\end{align}
		Now, by our choice of $\delta$ and \eqref{oscillation estimate constant homogeneous}, \eqref{pointwise sup bounds j} implies  
		\begin{align}\label{pointwise inf bounds j}
			\inf\limits_{B_{j}}	\left\lvert \X v_{j-1} \right\rvert \geq  A\lambda - \frac{1}{2}A\lambda \geq \lambda /2A   \qquad \text{ for all } 2 \leq j \leq i.
		\end{align} 
		Now \eqref{pointwise sup bounds j} and \eqref{pointwise inf bounds j} implies that we can apply Lemma \ref{linearized comparison-hom}. Thus, for every $2\leq j \leq i,$ we have 
		\begin{align}
			E_{j+1} &= \left(  \fint_{B_{j+1}} \left\lvert \X w- (\X w)_{B_{j+1}} \right\rvert^{\gamma} \right)^\frac{1}{\gamma} \notag \\
			&\leq 2\left(  \fint_{B_{j+1}} \left\lvert \X w- (\X v_{j})_{B_{j+1}} \right\rvert^{\gamma} \right)^\frac{1}{\gamma} \notag \\
			&\leq 2\left(  \fint_{B_{j+1}} \left\lvert \X v_{j}- (\X v_{j})_{B_{j+1}} \right\rvert^{\gamma} \right)^\frac{1}{\gamma} + 2 \left( \fint_{B_{j+1}} \left\lvert \X  w - \X  v_{j} \right\rvert^{\gamma}\right)^{\frac{1}{\gamma}} \notag \\
			&\leq \frac{1}{4} \left(  \fint_{B_{j}} \left\lvert \X v_{j}- (\X v_{j})_{B_{j}} \right\rvert^{\gamma} \right)^\frac{1}{\gamma} + 2\delta^{-\frac{Q}{\gamma}}\left( \fint_{B_{j}} \left\lvert \X  w - \X  v_{j} \right\rvert^{\gamma}\right)^{\frac{1}{\gamma}} \notag \\
			&\leq \frac{1}{2} \left(  \fint_{B_{j}} \left\lvert \X v_{j}- (\X w)_{B_{j}} \right\rvert^{\gamma} \right)^\frac{1}{\gamma} + 4\delta^{-\frac{Q}{\gamma}}\left( \fint_{B_{j}} \left\lvert \X  w - \X  v_{j} \right\rvert^{\gamma}\right)^{\frac{1}{\gamma}} \notag \\
			&\leq \frac{1}{2}E_{j} + 4\delta^{-\frac{Q}{\gamma}}C_{1} \mathfrak{P}_{r_{j-1}, \theta} \lambda. \label{excess decay at j}
		\end{align}
		Summing this inequality as $j$ ranges from $1$ to $i,$ we deduce 
		\begin{align*}
			\sum\limits_{j=3}^{i+1} E_{j} \leq \frac{1}{2} \sum\limits_{j=2}^{i} E_{j} + 4\delta^{-\frac{Q}{\gamma}}C_{1}\lambda \left( \sum\limits_{j=1}^{i}\mathfrak{P}_{r_{j-1}, \theta} \right). 
		\end{align*}	
		This implies 
		\begin{align}\label{sum of Ej estimate}
			\sum\limits_{j=2}^{i+1} E_{j} \leq  2E_{2} + 8\delta^{-\frac{Q}{\gamma}}C_{1} \left( \sum\limits_{j=1}^{i}\mathfrak{P}_{r_{j-1}, \theta} \right)\lambda \stackrel{\eqref{smallness of radius for sum p to be small}, \eqref{initial bound}}{\leq} \frac{\lambda \delta^{Q}}{100}. 
		\end{align}
		Thus, writing the difference of averages as a telescoping sum, we have 
		\begin{align*}
			a_{i+1} - a_{2} &= \sum\limits_{j=2}^{i} \left( a_{j+1} -a_{j}\right) \\ &\leq \sum\limits_{j=2}^{i} \fint_{B_{j+1}} \left\lvert \X w - \left( \X w \right)_{B_{j}}\right\rvert \\
			&\leq \delta^{-Q} \sum\limits_{j=1}^{i} \fint_{B_{j}} \left\lvert \X w - \left( \X w \right)_{B_{j}}\right\rvert \leq \delta^{-Q} \sum\limits_{j=1}^{i} E_{j} \stackrel{\eqref{sum of Ej estimate}}{\leq} \frac{\lambda}{100}. 
		\end{align*}
		This clearly implies \begin{align*}
			a_{j+1} + E_{j+1} \leq a_{1} + \frac{\lambda}{100} + E_{j+1} \leq a_{1} + \frac{\lambda}{100} +\sum\limits_{j=1}^{i+1} E_{j}  \stackrel{\eqref{initial bound}, \eqref{sum of Ej estimate}}{\leq} \lambda.  
		\end{align*}
		This proves \eqref{main control of composites} and completes the proof. 
	\end{proof}
	\subsection{Continuity}
	\textbf{Proof of Theorem \ref{main theorem}:} We divide the proof in three steps.\smallskip 
	
	\noindent\textbf{Step 1:} \underline{Boundedness estimate} The first conclusion is just Theorem \ref{pointwise bound thm} and estimate \eqref{sup bound homogeneous} follows from the estimate \eqref{sup bound pointwise} via  a well-known interpolation and covering argument.\smallskip 
	
	\noindent\textbf{Step 2:} \underline{Continuity} Now we prove continuity of $\X w$. We want to show $\X w$ is the locally uniform limit of a net of continuous maps, defined by the averages 
	$$ x \mapsto \left( \X w\right)_{B(x, \rho)}.$$ To do this, we pick any $R>0$ and $x_{0} \in \Omega$ such that $B\left(x_{0}, R\right) \subset \Omega$. Let $A \geq 1$ be a fixed number. We show that for every 
	$\varepsilon > 0,$ there exists a radius $0 < r_{\varepsilon} \leq R/1000 $, depending only on $\rm{\texttt{data}}, \varepsilon$ and $A$ such that if  
	\begin{align*}
		\sup\limits_{B\left(x_{0}, R/2\right)} \left\lvert \X w \right\rvert \leq A\lambda \qquad \text{ for some } \lambda >0,
	\end{align*}
	then for every $x \in B\left(x_{0}, R/4\right),$ the estimate 
	\begin{equation}\label{oscillationVdw}
		\left\lvert \left( \X w\right)_{B(x, \rho)} - \left( \X w\right)_{B(x, \varrho)} \right\rvert \leq \lambda \varepsilon \qquad \text{ holds for every } \rho, \varrho \in 
		(0, r_{\varepsilon}].
	\end{equation}
	This would imply that the sequence of maps $x \mapsto \left( \X w \right)_{B(x, \rho)}$ are uniformly Cauchy and would conclude the continuity of $\X w.$ We fix $\varepsilon >0.$ Now we choose the constants as in the proof of Theorem \ref{pointwise bound thm}, but in the the scale $\varepsilon.$ More precisely, we choose now, 
	we choose $\delta \in (0,\frac{1}{4})$ small enough such that 
	\begin{equation}\label{choiceofsigma1epsilon}
		\max \left\lbrace c_{2}\delta^{\beta}, 2C_{\gamma}\delta^{\beta}  \right\rbrace \leq \frac{\varepsilon}{4^{Q+4}}.
	\end{equation}
	Now, we fix a radius $ 0 < R_{1} < \bar{R}/16$ small enough such that we have 
	\begin{equation}\label{smallnessradius1epsilon} 
		\sum\limits_{j=1}^{\infty} \mathfrak{P}_{r_{j-1}, \theta} \leq \frac{\varepsilon \delta^{4Q}}{4C_{1}10^{2Q+4}} .
	\end{equation}
	With this, we have chosen all the relevant parameters. Fix $r< \min \{R/1000, R_1\}$ and $x \in B\left(x_{0}, R/4\right).$ Set $r_{j} = \delta^{j-1}r$ and define $B_{r_{j}} := B(x, r_{j}).$ Then we define $p_{j},$ $v_{j}, a_{j}, E_{j}$ exactly as in the proof of Theorem \ref{pointwise bound thm}.  Proceeding exactly as in the proof of estimate \eqref{excess decay at j} in the proof of Theorem \ref{pointwise bound thm}, we obtain the following.  
	
	For any $i \geq 1,$ we have 
	\begin{equation}\label{i+1 to i estimate for excess of xw}
		E_{i+1} \leq \frac{\varepsilon}{2} E_{i} +4\delta^{-\frac{Q}{\gamma}}C_{1}\lambda \mathfrak{P}_{r_{j-1}, \theta} . 
	\end{equation}
	This now easily implies that given $\varepsilon \in (0,1),$ there exists a positive radius $R_{3} \equiv R_{3}\left( \texttt{data}, \left\lVert \left\lvert \X w \right\rvert^{p\left(\cdot\right)}\right\rVert_{L^{1}\left( \Omega\right)}, \varepsilon, A \right)$ such that 
	\begin{equation}\label{excess very small for epsilon}
		\sup_{0 < \rho \leq R_{3}} \sup_{x \in B\left(x_{0}, R/4\right)} \left( \fint_{B_{\rho}} \left\lvert \X w - \left( \X w\right)_{B_{\rho}}\right\rvert^{\gamma}\right)^{\frac{1}{\gamma}} \leq \frac{\delta^{4Q}\lambda \varepsilon}{10^{10}}.
	\end{equation}
	We set $$ R_{0} := \frac{\min{\lbrace R/1000, R_{3}, R_{1} \rbrace} }{16}$$
	and again consider the chain of shrinking balls $B_{j} = B\left(x, \delta^{j-1}R_{0}\right)$ with the starting radius $R_0$.
	Now we want to show that given two integers $2 \leq i_{1} < i_{2},$ we have the estimate 
	\begin{equation}\label{integer ball average oscillation}
		\left\lvert (\X w)_{B_{i_{1}}} - (\X w)_{B_{i_{2}}} \right\rvert \leq \frac{\lambda \varepsilon}{10}.
	\end{equation}
	Note that this will complete the proof, since for any $0 < \rho < \varrho \leq \sigma^2 R_0,$ there exist integers such that 
	$ \sigma^{i_{1}+1}R_{0} < \varrho \leq \sigma^{i_{1}}R_{0}$ and $\sigma^{i_{2}+1}R_{0} < \rho \leq \sigma^{i_{2}}R_{0}. $
	Also, we have the easy estimates
	\begin{align*}
		\left\lvert (\X w)_{B_{\varrho}} - (\X w)_{B_{i_{1}+1}} \right\rvert &\leq \fint_{B_{i_{1}+1}} \left\lvert  \X w - ( \X w)_{B_{\varrho}}  \right\rvert \\
		&\leq \frac{\lvert B_{\varrho}\rvert}{\lvert B_{i_{1}+1}\rvert} \fint_{B_{\varrho}} \left\lvert  \X w - (\X w)_{B_{\varrho}}  \right\rvert \\
		&\stackrel{\eqref{excess very small for epsilon}}{\leq} \delta^{-Q} \left( \fint_{B_{\rho}} \left\lvert \X w - \left( \X w\right)_{B_{\varrho}}\right\rvert^{\gamma}\right)^{\frac{1}{\gamma}} \leq  \frac{\lambda \varepsilon}{1000}
	\end{align*}
	and similarly 
	$$  \left\lvert (\X w)_{B_{\rho}} - (\X w)_{B_{i_{2}+1}} \right\rvert \leq \frac{\lambda \varepsilon}{1000}.$$
	These two estimates combined with \eqref{integer ball average oscillation} will establish \eqref{oscillationVdw}. Thus it only remains to establish \eqref{integer ball average oscillation}. But summing up \eqref{i+1 to i estimate for excess of xw} for 
	$i = i_{1}-1$ to $i_{2}-2,$ we obtain 
	$$\sum_{i=i_{1}}^{i_{2}-1}E_{i} \leq 2 E_{i_{1}-1} + 4\delta^{-\frac{Q}{\gamma}}C_{1}\sum_{i=1}^{\infty} \mathfrak{P}_{r_{i-1}, \theta}
	\leq \frac{ \delta^{2Q} \lambda \varepsilon}{50}. $$
	This yields \eqref{integer ball average oscillation} via the elementary estimate 
	\begin{align*}
		\left\lvert (\X w)_{B_{i_{1}}} - (\X w)_{B_{i_{2}}} \right\rvert \leq \sum_{i=i_{1}}^{i_{2}-1} \fint_{B_{i+1}} \left\lvert \X w - (\X w)_{B_{i}} \right\rvert \leq \delta^{-Q} \sum_{i=i_{1}}^{i_{2}-1}E_{i} .
	\end{align*}
	This establishes \eqref{oscillationVdw} and thus, completes the proof of continuity of $\X w.$\smallskip 
	
	\noindent\textbf{Step 3:} \underline{Oscillation estimate} Now we prove the estimate \eqref{osc bound homogeneous}.  Fix $x_{0}\in \Omega$, $A, R$ and $\lambda$ satisfying $\sup\limits_{B\left(x_{0}, R/2\right)} \left\lvert \X w \right\rvert \leq A\lambda.$ Now, as in \eqref{oscillationVdw} above, we deduce that there exists $r_{\varepsilon}= r_{\varepsilon} \left( \texttt{data}, \left\lVert \left\lvert \X w \right\rvert^{p\left(\cdot\right)}\right\rVert_{L^{1}\left( \Omega\right)}, \varepsilon, A \right) \in (0, R/16)$ such that 
	\begin{equation}\label{oscillationVdwFC}
		\sup\limits_{x \in B(x_{0},R/4)}\sup\limits_{0 \leq \rho, \varrho \leq r_{\varepsilon}}\left\lvert \left( \X w\right)_{B(x, \rho)} - \left( \X w\right)_{B(x, \varrho)} \right\rvert \leq \frac{\lambda \varepsilon}{3} 
	\end{equation}
	and 
	\begin{equation}\label{smallness of the excess diniFC}
		\sup\limits_{x \in B(x_{0},R/4)}\sup\limits_{0 \leq \rho \leq r_{\varepsilon}}	\left( \fint_{B_{\rho}} \left\lvert \X w - \left( \X w\right)_{B_{\rho}}\right\rvert^{\gamma}\right)^{\frac{1}{\gamma}} \leq \frac{\lambda \varepsilon}{48^{2Q}}. 
	\end{equation}
	Since $\X w$ is continuous, by letting $\rho \rightarrow 0$ in \eqref{oscillationVdwFC}, we obtain
	\begin{align}\label{oscillationVdwFClimit}
		\sup\limits_{x \in B(x_{0},R/4)}	\left\lvert  \X w\left( x\right) - \left( \X w\right)_{B(x, \varrho)} \right\rvert \leq \frac{\lambda \varepsilon}{3} \qquad \text{ holds for every }  \varrho \in 
		(0, r_{\varepsilon}].
	\end{align}
	Now choose $\tau_1 >0$ sufficiently small such that we have 
	\begin{align*}
		\tau_1 R \leq \frac{r_{\varepsilon}}{64}.
	\end{align*}
	Now for any $ y, x\in B(x_{0} , \tau_1 R),$ we have $ 	B(y, r_{\varepsilon}/8)  \subset B(x, r_{\varepsilon }) \subset  B( x_{0}, R/8 ).$
	So, we estimate  
	\begin{align}\label{diff averrages}
		&\left\lvert  \left( \X w\right)_{B(y,  r_{\varepsilon}/8)} - \left( \X w\right)_{B(x,  r_{\varepsilon}/8)} \right\rvert 	\notag \\&\qquad\leq \fint_{B(y,  r_{\varepsilon}/8)}\left\lvert  \X w - \left( \X w\right)_{B(x,  r_{\varepsilon}/8)} \right\rvert \notag\\
		&\qquad \leq 8^Q \fint_{B(x,  r_{\varepsilon})}\left\lvert  \X w - \left( \X w\right)_{B(x,  r_{\varepsilon})} \right\rvert+ 8^Q \left\lvert \left( \X w\right)_{B(x,  r_{\varepsilon})} - \left( \X w\right)_{B(x,  r_{\varepsilon}/8)} \right\rvert  \notag\\
		&\qquad\leq 2\cdot8^{2Q} \fint_{B(x,  r_{\varepsilon})}\left\lvert  \X w - \left( \X w\right)_{B(x,  r_{\varepsilon})} \right\rvert \notag\\
		&\qquad\leq 16^{2Q} \left(\fint_{B(x,  r_{\varepsilon})}\left\lvert  \X w - \left( \X w\right)_{B(x,  r_{\varepsilon})} \right\rvert^{\gamma} \right)^{\frac{1}{\gamma}} \stackrel{\eqref{smallness of the excess diniFC}}{\leq}\frac{\lambda\varepsilon}{3}. 
	\end{align}
	Thus, by triangle inequality, for any $x, y \in B(x , \tau_1 R),$ we have 
	\begin{align*}
		\left\lvert  \X w\left( x\right) -  \X w\left( y\right) \right\rvert &
		\begin{multlined}[t]
			\leq 	\left\lvert  \X w\left( x\right) - \left( \X w\right)_{B(x, r_\varepsilon/8)} \right\rvert + 	\left\lvert  \X w\left( y\right) - \left( \X w\right)_{B(y, r_\varepsilon/8)} \right\rvert  \\ + \left\lvert  \left( \X w\right)_{B(y,  r_{\varepsilon}/8)} - \left( \X w\right)_{B(x,  r_{\varepsilon}/8)} \right\rvert,
		\end{multlined}\\
		&\stackrel{\eqref{oscillationVdwFClimit}}{\leq} \frac{\lambda\varepsilon}{3} + \frac{\lambda\varepsilon}{3} + \left\lvert  \left( \X w\right)_{B(y,  r_{\varepsilon}/8)} - \left( \X w\right)_{B(x,  r_{\varepsilon}/8)} \right\rvert \stackrel{\eqref{diff averrages}}{\leq} \lambda\varepsilon. 
	\end{align*}
	This proves \eqref{osc bound homogeneous} and completes the proof.
	\section*{Acknowledgment}
	S.Sil's research is supported by the ANRF-SERB MATRICS Project grant MTR/2023/000885.

\end{document}